\newcommand {\bel}[1]{\begin{align*}}
\newcommand {\eel}[1]{\end{align*}}
\newcommand {\bea}{\begin{eqnarray}}
\newcommand {\eea}{\end{eqnarray}}
\newcommand{\pr}{\mathbb{P}}
\newcommand{\mb}[1]{\mbox{\boldmath $#1$}}
\newcommand{\ignore}[1]{\relax}
\def\bbr{{\mathbb R}}
\newcommand{\reals}{{\mathbb R}}
\newtheorem{theorem}{Theorem}
\newtheorem{lemma}{Lemma}
\newtheorem{corollary}{Corollary}
\newtheorem{definition}{Definition}
\newtheorem{question}{Question}
\newtheorem{observation}{Observation}
\begin{document}
\begin{frontmatter}
\title{Second-order Markov random fields for independent sets on the infinite Cayley tree}
\runtitle{Second-order Markov random fields for independent sets on the infinite Cayley tree}
\begin{aug}
\author{\fnms{David\ A.} \snm{Goldberg}\ead[label=e2]{dgoldberg9@isye.gatech.edu}}
\affiliation{Georgia Institute of Technology}
  \address{
	Georgia Institute of Technology, Atlanta, GA, 30332\\	
	  \ \ \ \ \ \printead{e2}
}

\end{aug}
\begin{abstract}
Recently, there has been significant interest in understanding the properties of Markov random fields (M.r.f.) defined on on the independent sets of sparse graphs.  When these M.r.f. are restricted to pairwise interactions (i.e. hardcore model), much progress has been made.  However, considerably less is known in the presence of higher-order interactions, which arise e.g. in the analysis of independent sets with special properties and the study of resource-constrained communication networks.  In this paper, we further our understanding of such models by analyzing M.r.f. with second-order interactions on the independent sets of the infinite Cayley tree.  We prove that the associated Gibbsian specification satisfies the celebrated FKG Inequality whenever the local potentials defining the Hamiltonian satisfy a log-convexity condition.  Under this condition, we give necessary and sufficient conditions for the existence of a unique infinite-volume Gibbs measure in terms of an explicit system of equations, prove the existence of a phase transition, and give explicit bounds on the associated critical activity, which we prove to exhibit a certain robustness.  For potentials which are small perturbations of those coinciding to the hardcore model at the critical activity, we characterize whether the resulting specification has a unique infinite-volume Gibbs measure in terms of whether these perturbations satisfy an explicit linear inequality.  Our analysis reveals an interesting non-monotonicity with regards to biasing towards excluded nodes with no included neighbors. \end{abstract}
\begin{keyword}[class=AMS]
\kwd[Primary ]{60K35}
\end{keyword}
\begin{keyword}
\kwd{independent set}
\kwd{Markov random field}
\kwd{Gibbs measure}
\kwd{phase transition}
\kwd{hardcore model}
\end{keyword}

\end{frontmatter}
\section{Introduction}\label{introsec}
Recently, there has been a significant interest in combining ideas from probability, computer science, physics, statistics, and operations research, to shed light on the structure and complexity of combinatorial optimization, counting, and sampling problems (cf. \cite{mezard2002analytic,achlioptas2008algorithmic,ding2013maximum}).  Some of the most well-studied such problems involve the independent sets of a graph.  Consider an undirected graph $G$, which consists of a set of nodes $V$ and edges $E$, where each edge $e \in E$ is of the form $(v_i,v_j)$ for some $v_i,v_j \in V$.  Then the independent sets of $G$, ${\mathcal I}(G)$, are defined to be the subsets $S$ of $V$ with no internal edges; i.e. a set $S \subseteq V$ is an independent set iff for all pairs of nodes $v_i,v_j \in S$, $(v_i,v_j) \notin E$.  There are a wealth of results about the complexity and (in)approximability of counting, sampling, and optimizing independent sets under various restrictions.  We make no attempt to survey that literature here, instead focusing only on the results most relevant to our own investigations, and refer the interested reader to \cite{sly2014counting} and the references therein for a recent overview.
\subsection{Infinite-volume Gibbs measures on the Cayley tree and the uniqueness regime}
As our main results will be stated in terms of measures on the $\Delta$-regular infinite Cayley tree $T_{\infty}$, we begin by briefly reviewing several concepts needed to formally describe such measures, following the exposition given in \cite{friedli2014equilibrium}.  We assume the nodes of $T_{\infty}$ are indexed by the non-negative integers $Z^+$, and the tree is rooted at node 0.  With a slight abuse of notation, we also let $T_{\infty}$ denote the corresponding indexed set of nodes.  In the spin systems considered in this paper, each node $i \in T_{\infty}$ is assigned a spin from the set $\lbrace 0,1 \rbrace$.  Let $\Omega$ denote the collection of all $\lbrace 0,1 \rbrace$ spin assignments to the nodes of $T_{\infty}$.  For $\omega \in \Omega$ and $S \subseteq T_{\infty}$, let $\omega_S$ denote the resctriction of $\omega$ to the nodes of $S$, and $\Omega_S$ denote the collection of all $\lbrace 0,1 \rbrace$ spin assignments to the nodes of $S$.  For an event $A$, let $I(A)$ denote the corresponding indicator.  For $S \subseteq T_{\infty}$ and $\omega \in \Omega_S$, let $|\omega| \stackrel{\Delta}{=} \sum_{i \in S} \omega_{\lbrace i \rbrace}$.  Also, for a general set $S$, let $|S|$ denote the cardinality of $S$.
\\\indent For every $S \subseteq T_{\infty}$, we define a potential $\Phi_S: \Omega_S \rightarrow \bbr$, mapping the spins of $S$ to $\bbr$, where we use $\bbr$ to denote the positively extended real numbers, i.e. including $\infty$.  For all models considered, thre will exist a finite radius $R$ such that $\Phi_S = 0$ (i.e. is identically zero) for all $S$ which contain any two nodes $i,j$ at graph-theoretic distance strictly greater than $R$ in $T_{\infty}$.  Let $\mathbf{\Phi}$ denote the collection of all potentials, i.e. $\lbrace \Phi_S , S \subseteq T_{\infty} \rbrace$.  As a notational convention, let us evaluate all empty summations to zero, and all empty products to unity.  For $i,j \in T_{\infty}$, let $d(i,j)$ denote the graph-theoretic distance between $i$ and $j$ in $T_{\infty}$.  For $S \subseteq T_{\infty}$ and $i \in T_{\infty}$, let $d(i,S) \stackrel{\Delta}{=} \inf_{j \in S} d(i,j)$, and $\partial S \stackrel{\Delta}{=} \bigcup_{j \in T_{\infty}\ :\ d(j,S)\ \leq\ 2 R} \lbrace j \rbrace \setminus S$, i.e. $\partial S$ denotes the depth-$2 R$ boundary surrounding $S$.  For $d \geq 1$, let $T_d$ denote the set of nodes with graph-theoretic distance at most $d$ from $0$ in $T_{\infty}$.  For two disjoint subsets $S_1,S_2 \subseteq T_{\infty}$, and configurations $\omega^1 \in \Omega_{S_1}, \omega^2 \in \Omega_{S_2}$, let $\omega^1 \cdot \omega^2$ denote the composition spin assignment which agrees with $\omega^1$ on $S_1$ and $\omega^2$ on $S_2$.  
\\\indent For every $\Lambda \subseteq T_{\infty}$, we define the Hamiltonian ${\mathcal H}^{\mathbf{\Phi}}_{\Lambda}: \Omega_{\Lambda \bigcup \partial \Lambda} \rightarrow \bbr$ as $\sum_{S \subseteq T_{\infty}\ :\ S \bigcap \Lambda \neq \emptyset} \Phi_S(\omega_S)$.  A so-called infinite-volume Gibbs measure $\mu$ consistent with $\mathbf{\Phi}$ is a probability measure $\mu$ on $\Omega$  (associated with an appropriate probability space and filtration ${\mathcal F}$, see \cite{friedli2014equilibrium} for details), which satisifes certain consistency requirements associated with conditioning on a boundary.  In particular, for finite $S \subseteq \Lambda \subseteq T_{\infty}$, $\omega \in \Omega_S$, and $\eta \in \Omega_{\partial \Lambda}$, let 
\begin{equation}\label{lim0p5}
\pr_{\mathbf{\Phi},\Lambda}(S = \omega | \eta) \stackrel{\Delta}{=} 
\frac{\sum_{\nu \in \Omega_{\Lambda}\ :\ \nu_S\ =\ \omega} 
exp\big(- {\mathcal H}^{\mathbf{\Phi}}_{\Lambda}(\nu \cdot \eta) \big)}
{\sum_{\nu \in \Omega_{\Lambda}} exp\big(- {\mathcal H}^{\mathbf{\Phi}}_{\Lambda}(\nu \cdot \eta) \big)},
\end{equation}
whenever this ratio is well-defined.  For an event $A$ on an appropriate filtration associated with the subset $S$, we analogously define 
$\pr_{\mathbf{\Phi},\Lambda}(A | \eta) = \sum_{\omega \in A} \pr_{\mathbf{\Phi},\Lambda}(S = \omega | \eta)$.
For $S \subseteq T_{\infty}$ and $\omega \in \Omega_S$, we let $\lbrace S = \omega \rbrace$ be the event that the nodes of $S$ receive the spin-configuration dictated by $\omega$.  Then the aforementioned consistency requires that for any finite $S \subseteq \Lambda \subseteq T_{\infty}$ and $\omega \in \Omega_S$,
\begin{equation}\label{lim1}
\mu(S = \omega) = \sum_{\eta \in \partial \Lambda} \pr_{\mathbf{\Phi},\Lambda}(S = \omega | \eta) \times \mu(\partial \Lambda = \eta).
\end{equation}
Any measure $\mu$ satisfying (\ref{lim1}), as well as certain other technical conditions (the details of which we omit, instead referring the reader to \cite{friedli2014equilibrium}), is said to be an infinite-volume Gibbs measure consistent with $\mathbf{\Phi}$, and we let ${\mathcal G}(\mathbf{\Phi})$ denote the collection of all such measures.  As a notational convenience, we denote $\pr_{\mathbf{\Phi},T_d}(S = \omega | \eta)$ by $\pr_{\mathbf{\Phi}}(S = \omega | \eta)$, where $d$ is to be inferred from context (e.g. $\eta$ belonging to $\Omega_{\partial T_d}$).
\\\indent It is well-known that under minimal technical conditions ${\mathcal G}(\mathbf{\Phi})$ is a non-empty convex set, where we denote the corresponding set of extreme measures as $\hat{{\mathcal G}}(\mathbf{\Phi})$.  If $|\hat{{\mathcal G}}(\mathbf{\Phi})| = 1$, we say that $\mathbf{\Phi}$ belongs to the \emph{uniqueness regime}, i.e. admits a unique infinite-volume Gibbs measure.  Furthermore, every such extremal measure can be constructed as a so-called thermodynamic limit of appropriately conditioned finite spin systems, in the following sense.  To each $\mu \in \hat{{\mathcal G}}(\mathbf{\Phi})$, we can associate $\omega^{\mu} \in \Omega$ such that for any finite $S \subseteq T_{\infty}$ and $\omega \in \Omega_S$,
\begin{equation}\label{lim2}
\mu(S = \omega) = \lim_{d \rightarrow \infty} \pr_{\mathbf{\Phi}}(S = \omega | \omega^{\mu}_{\partial T_d}).
\end{equation}
In light of (\ref{lim2}), non-uniqueness can also be interpreted as non-vanishing dependence on distant boundary conditions.
\subsection{Hardcore model on $T_{\infty}$}
The hardcore model on $T_{\infty}$ coincides with the following collection of potentials $\mathbf{\Phi}$.    For some fixed activity $\lambda > 0$: $\Phi_{\lbrace i \rbrace}(\omega) = - \log(\lambda) I( \omega_{\lbrace i \rbrace} = 1)$ for all $i \in T_{\infty}$; $\Phi_{\lbrace i,j \rbrace}(\omega) = \infty I\big( |\omega_{\lbrace i , j \rbrace}| = 2 \big)$ for all pairs of nodes $(i,j)$ which are adjacent in $T_{\infty}$; and $\Phi_S$ is identically zero for all other $S \subseteq T_{\infty}$.  Under local conditioning, this measure puts all probability on spin assignments corresponding to independent sets, assigning an independent set $S$ probability proportional to $\lambda^{|S|}$.  When $\lambda = 1$, computing the relevant normalizing constant (i.e. partition function) is equivalent to counting the number of independent sets (a $\#P$-Complete problem in general graphs \cite{Weitz.06}); as $\lambda \rightarrow \infty$, all the probability mass gets put on the largest independent sets, and computing the partition function is analagous to finding the cardinality of the maximum independent set (an NP-Complete problem in general graphs \cite{GJ.79}).  Such models have a rich history in the physics literature.  Models on the infinite lattice were studied early-on by several authors (cf. \cite{Temp.59,Domb.60,Run.66}).  This work was extended to the three-regular infinite Cayley tree by L.K. Runnels in \cite{Run.67}, and the general $\Delta$-regular infinite Cayley tree in \cite{Colin.82}.
\subsubsection{Phase transition and non-uniqueness}
Motivated by the behavior of large particle systems, several of the original investigations of the hardcore model focused on identifying which sets of potentials (here parametrized by $\lambda$) belonged to the uniqueness regime.  In particular, for each $\Delta \geq 3$, there exists a critical actvity $\lambda_{\Delta} \stackrel{\Delta}{=} (\Delta-1)^{\Delta-1}(\Delta-2)^{-\Delta}$ such that the hardcore model on the infinite $\Delta$-regular Cayley tree admits a unique infinite-volume Gibbs measure iff $\lambda \in (0,\lambda_{\Delta}]$ (cf. \cite{Kelly.91}).  More recently, it has been shown that this same phase transition also corresponds to the point at which certain Markov chains for sampling from the independent sets of a graph of maximum degree $\Delta$ switch from mixing in polynomial time to mixing in exponential time (cf. \cite{MWW.09}).  Furthermore, it was shown in \cite{Weitz.06} that for all $\lambda \leq \lambda_{\Delta}$, the problem of computing $\sum_{S \in {\mathcal I}(G)} \lambda^{|S|}$ admits a Fully Polynomial Time Approximation Scheme (FPTAS) for all graphs of maximum degree $\Delta$.  Combined with the results of \cite{sly2014counting} (and the references therein), which show that no such FPTAS exists for $\lambda > \lambda_{\Delta}$ unless certain complexity classes collapse, this shows that the aforementioned phase transition has deep connections to computational complexity.  This phase transition also has implications for various other applications, e.g. the design of communication networks (cf. \cite{Kelly.91}).
\subsection{Higher-order M.r.f. for independent sets}
Many applications modeled by Gibbs measures defined on the independent sets of graphs involve more complicated dependencies and constraints on the independent sets themselves.  This includes several models in physics, e.g. models with next-nearest-neighbor and/or competing interactions (cf. \cite{vannimenus1981modulated}), kinetically constrained spin models (cf. \cite{KA.93}), and geometrically constrained spin models (cf. \cite{BM.01}), and we refer the interested reader to the recent survey of \cite{rozikov2013gibbs} for many more such examples.  Such measures also arise in combinatorial optimization, e.g. through the study of subfamilies of independent sets such as those in which every excluded node is adjacent to some minimal number of included nodes (cf. \cite{DPR.09,ding2013maximum}).  Closely related models have also arisen in the analysis of resource-constrained communication networks (cf. \cite{RSZM.02,LRZ.06,hellings2011tandem}).  In such networking applications, two central questions are:
\begin{itemize}
\item What is the distribution of the number of included neighbors of an excluded node? (cf. \cite{papadimitratos2008secure,khabbazian2008localized,akyildiz2002wireless})
\item Does the system exhibit long-range boundary independence? (cf. \cite{Kelly.91,RSZM.02})
\end{itemize}
Combining the above, we are led to the following question.
\begin{question}\label{Qnew1b}
When sampling from the independent sets of the infinite Cayley tree, which distributions can be attained for the number of included neighbors of any given excluded node, while staying in the uniqueness regime?
\end{question}
A good starting place is the hardcore model, for which the following result is well-known.  Let $B(n,p)$ denote a standard binomial distribution with parameters $n$ and $p$.
\begin{observation}\cite{Spitzer.75}\label{Obsnew1}
For the hardcore model on the infinite Cayley tree in the uniqueness regime, every excluded node has a number of included neighbors which follows a binomial distribution.  Exactly which binomial distributions can be acheived in this way is dictated by the phase-transition at $\lambda_{\Delta}$.  In particular, it is possible to induce a $B(\Delta,p)$ distribution on the number of included neighbors of each excluded node for any $p \in (0,(\Delta-1)^{-1}]$ in the uniqueness regime, and this characterization is tight.  
\end{observation}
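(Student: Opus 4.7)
The plan is to exploit the Markov random field (tree) structure of the hardcore model on the infinite Cayley tree, combined with the standard tree recursion for the hardcore model.

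First I would invoke the Markov property of the unique translation-invariant Gibbs measure in the uniqueness regime. Fix an excluded node $v$ and let $u_1,\ldots,u_\Delta$ denote its neighbors. Removing $v$ disconnects the tree into $\Delta$ disjoint infinite subtrees $T_1,\ldots,T_\Delta$, each rooted at some $u_i$. Because the hardcore model is a nearest-neighbor Gibbs specification, conditioning on $\{v \notin {\mathcal I}\}$ makes the restrictions of ${\mathcal I}$ to $T_1,\ldots,T_\Delta$ conditionally independent. Hence the indicators $\rI[u_i \in {\mathcal I}]$ are i.i.d.\ given $v \notin {\mathcal I}$, and by translation invariance they share a common marginal $p := \rP_{\lambda}(u_1 \in {\mathcal I} \mid v \notin {\mathcal I})$. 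This already forces the number of included neighbors of $v$ to be $B(\Delta,p)$.

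Next I would compute $p$ explicitly as a function of $\lambda$ using the tree recursion. Let $R$ denote the ratio $Z^{in}/Z^{out}$ for the Gibbs measure restricted to a subtree rooted at a vertex with $\Delta-1$ further children (the subtree of shape $T_i$ above). Since an included root contributes activity $\lambda$ and forces its children to be excluded, while an excluded root leaves its children unconstrained, the standard recursion gives $R = \lambda/(1+R)^{\Delta-1}$, i.e.\ $\lambda = R(1+R)^{\Delta-1}$. The quantity $p$ equals $R/(1+R)$, so substituting $R = p/(1-p)$ yields the clean parametrization $\lambda = p/(1-p)^\Delta$, which is strictly increasing on $[0,1)$.

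Finally I would match the uniqueness regime to the resulting range of $p$. The Jacobian stability condition $|f'(R)|=1$ for $f(R)=\lambda/(1+R)^{\Delta-1}$ reduces, after using $\lambda = R(1+R)^{\Delta-1}$, to $(\Delta-1)R/(1+R) = 1$, giving $R^* = 1/(\Delta-2)$ and $p^* = R^*/(1+R^*) = 1/(\Delta-1)$. Plugging back into $\lambda = p/(1-p)^\Delta$ recovers the critical activity $\lambda_\Delta = (\Delta-1)^{\Delta-1}(\Delta-2)^{-\Delta}$ exactly. Since the parametrization $p \mapsto \lambda$ is a monotone bijection from $(0, 1/(\Delta-1)]$ onto $(0, \lambda_\Delta]$, every $p \in (0, (\Delta-1)^{-1}]$ is realized by a unique $\lambda$ in the uniqueness regime, and the observation follows.

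The only slightly delicate point is the first step: the statement implicitly refers to an infinite graph, so one must cite (or verify) that in the uniqueness regime there is a unique translation-invariant Gibbs measure satisfying the DLR equations, which legitimizes both the conditional independence used above and the identification of $p$ with the fixed point of the tree recursion. Given the references already invoked in the excerpt, this is standard rather than a genuine obstacle, so the proof is essentially a computation once the Markov property is in hand.
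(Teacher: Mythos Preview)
Your argument is correct. The paper does not give its own proof of this observation (it is cited to Spitzer), but your derivation is exactly the classical one and coincides with what the paper's general framework yields when specialized to $\mb\theta = \mathbf{1}$: Observation~\ref{relate1} and Lemma~\ref{impliesdecay00} give $p_k^{\lambda,\mathbf{1}} \propto \binom{\Delta}{k}\zeta^k$, i.e.\ $B(\Delta,p)$ with $p = \zeta/(1+\zeta)$, where $\zeta$ solves $\zeta = \lambda(1+\zeta)^{-(\Delta-1)}$; this $\zeta$ is your $R$, and Lemma~\ref{goodxis} records $\zeta_{\lambda_\Delta,\mathbf{1}} = (\Delta-2)^{-1}$, matching your critical $R^* = 1/(\Delta-2)$ and hence $p^* = (\Delta-1)^{-1}$.
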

\indent A natural framework for studying distributions on the independent sets of a graph with more complicated dependencies, reflected in many of the applications discussed above, is that of so-called higher-order M.r.f. (cf. \cite{TB.01}), equivalently spin systems in which the potentials $\Phi_S$ defining the Hamiltonian are non-zero for more complicated subsets of $T_{\infty}$ (i.e. not just individual nodes and edges, which corresond to first-order M.r.f.).  We note that such systems can also be analyzed as so-called factor (i.e. graphical) models with long-range interactions, and refer the reader to the excellent survey \cite{wainwright2008graphical} for an overview.
\subsubsection{Second-order M.r.f for independent sets}
In this paper, we will consider so-called second-order M.r.f. for independent sets (cf. \cite{TB.01}), in which potentials are defined on depth-1 neighborhoods, i.e. $R = 1$ (which should be assumed throughout).  Here we also assume that the potentials are translation and rotation-invariant.  In particular, for $i \in T_{\infty}$, let $N(i)$ denote the set of neighbors of $i$ in $T_{\infty}$, as well as $i$ itself; and $N_1(i) \stackrel{\Delta}{=} N(i) \setminus \lbrace i \rbrace$.  We will consider sets of potentials $\mathbf{\Phi}$ such that for some activity $\lambda > 0$ and strictly positive $(\Delta + 1)$-dimensional vector $\boldsymbol\theta = (\theta_0,\ldots,\theta_{\Delta})$, and every $i \in T_{\infty}$,
\begin{align}\label{gonefishin2}
\mathbf{\Phi}_{N(i)}(\omega) & = \begin{cases} - \log(\lambda) &\ \textrm{if}\ \omega_{\lbrace i \rbrace} = 1, |\omega_{N(i)}| = 1;\\- \log (\theta_k) &\ \textrm{if}\ \omega_{\lbrace i \rbrace} = 0, |\omega_{N(i)}| = k;\\
\infty &\ \textrm{otherwise};
\end{cases}
\end{align}
while $\mathbf{\Phi}_S$ is identically zero for all other $S \subseteq T_{\infty}$.  Thus, in addition to the hardcore constraints and activity parameter $\lambda$, we assign a different potential $- \log(\theta_k)$ for each excluded node which is adjancent to exactly $k$ included nodes.  To express the dependence on $\lambda$ and $\boldsymbol\theta$, we denote the corresponding set of potentials $\mathbf{\Phi}$ by the vector $(\lambda,\boldsymbol\theta)$.  For a given vector $\mb\theta$, let us say that $\mb\theta$ exhibits a phase transition if there exist strictly positive finite $\lambda_1 < \lambda_2$ such that $(\lambda_1,\mb\theta)$ belongs to the uniqueness regime, while $(\lambda_2,\mb\theta)$ does not belong to the uniqueness regime.  For $\mb\theta$ exhibiting a phase transition, let us define the critical activity $\lambda_{\mb\theta} \stackrel{\Delta}{=} \inf\lbrace \lambda > 0\ :\ (\lambda,\mb\theta)\ \textrm{does not belong to the uniqueness regime} \rbrace$.  We note that several of the examples mentioned earlier involving independent sets with more complicated dependency structure may be put in the framework of such second-order M.r.f.
\\\indent The hardcore model may be viewed as a special case of our model, in which $\theta_k = 1$ for all $k$.  More generally, it follows from a straightforward reduction that the case $\theta_k = \theta_0 \gamma^k$ (for some parameters $\theta_0, \gamma > 0$) also reduces to the hardcore model, albeit with activity $\lambda \theta^{-1}_0 \gamma^{\Delta}$.  Recall that a strictly positive sequence $\lbrace x_i, i = 0,\ldots,n \rbrace$ is called \emph{log-convex} if
$\frac{x_{i+1}}{x_{i}} \geq \frac{x_{i}}{x_{i-1}}$ for all $i \in \lbrace 1,\ldots,n-1 \rbrace$.  If $\mb\theta$ is log-convex, it is natural to define vectors $\underline{\mb\theta},\overline{\mb\theta}$ such that $\underline{\theta}_k = \theta_0 (\frac{\theta_1}{\theta_0})^k, \overline{\theta}_k = \theta_0 (\frac{\theta_{\Delta}}{\theta_{\Delta-1}})^k$, where log-convexity ensures that $\underline{\theta}_k \leq \theta_k \leq \overline{\theta}_k$ for all $k \in \lbrace 0,\ldots,\Delta \rbrace$, and $\frac{\underline{\theta}_{k+1}}{\underline{\theta}_k} \leq \frac{\theta_{k+1}}{\theta_k} \leq \frac{\overline{\theta}_{k+1}}{\overline{\theta}_k}$ for all $k \in \lbrace 0,\ldots,\Delta - 1 \rbrace$.  Note that $\underline{\theta}$ ($\overline{\theta}$) corresponds to the vector in which all ratios between consecutive entries are lowered (raised) to the lowest (highest) such ratio manifesting in $\mb\theta$.  By the aforementioned reduction to the hardcore model, $\lambda_{\underline{\mb\theta}} = \lambda_{\Delta} \theta_0 (\frac{\theta_0}{\theta_1})^{\Delta}$; while $\lambda_{\overline{\mb\theta}} = \lambda_{\Delta} \theta_0 (\frac{\theta_{\Delta-1}}{\theta_{\Delta}})^{\Delta}$, where we note that log-convexity ensures $\lambda_{\overline{\mb\theta}} \leq \lambda_{\underline{\mb\theta}}$.
\subsection{FKG Inequality}
A powerful tool for analyzing whether a given set of potentials belongs to the uniqueness regime are the so-called correlation inequalities, including the celebrated FKG Theorem (cf. \cite{fortuin1971correlation,pemantle2004towards}).  Roughly, the FKG Theorem proves that if a probability measure satisfies a certain supermodularity condition known as the FKG Inequality, then that measure enjoys certain monotonicity properties.  Although the FKG Theorem holds in considerable generality, we will only state the inequality and its implications as customized to the specific models considered in this paper, following the exposition given in \cite{haggstrom1997ergodicity} for a different generalization of the hardcore model.  Let us define a partial order $\tilde{\leq}$ on $\Omega$ (and appropriate restrictions) as follows.  Let $T^e_{\infty}$ denote the subset of $T_{\infty}$ consisting of the root $0$, and all nodes whose graph-theoretic distance from $0$ is even; and $T^o_{\infty} \stackrel{\Delta}{=} T_{\infty} \setminus T^e_{\infty}$.  For $S \subseteq T_{\infty}$ and $\omega^1, \omega^2 \in \Omega_S$, let us say that $\omega^1 \tilde{\leq} \omega^2$ if $\omega^1_{\lbrace i \rbrace} \leq \omega^2_{\lbrace i \rbrace}$ for all $i \in S \bigcap T^e_{\infty}$, and $\omega^1_{\lbrace i \rbrace} \geq \omega^2_{\lbrace i \rbrace}$ for all $i \in S \bigcap T^o_{\infty}$.  For $S \subseteq T_{\infty}$ and $\omega^1, \omega^2 \in \Omega_S$, let $\omega^1 \wedge \omega^2 \in \Omega_S$ denote the following spin configuration.  $\omega^1 \wedge \omega^2  _{\lbrace i \rbrace} = \min(\omega^1_{\lbrace i \rbrace}, \omega^2_{\lbrace i \rbrace})$ for $i \in S \bigcap T^e_{\infty}$; and $\omega^1 \wedge \omega^2  _{\lbrace i \rbrace} = \max(\omega^1_{\lbrace i \rbrace}, \omega^2_{\lbrace i \rbrace})$ for $i \in S \bigcap T^o_{\infty}$.  Similarly, let $\omega^1 \vee \omega^2 \in \Omega_S$ denote the following spin configuration.  $\omega^1 \vee \omega^2  _{\lbrace i \rbrace} = \max(\omega^1_{\lbrace i \rbrace}, \omega^2_{\lbrace i \rbrace})$ for $i \in S \bigcap T^e_{\infty}$; and $\omega^1 \vee \omega^2  _{\lbrace i \rbrace} = \min(\omega^1_{\lbrace i \rbrace}, \omega^2_{\lbrace i \rbrace})$ for $i \in S \bigcap T^o_{\infty}$.  Note that $\omega^1 \wedge \omega^2 \tilde{\leq} \omega^1,\omega^2 \tilde{\leq} \omega^1 \vee \omega^2$.  Let $\tilde{\Omega}$ denote the subset of $\Omega$ consistent with the hardcore constraints, i.e. $\omega \in \tilde{\Omega}$ if $|\omega_{\lbrace i,j \rbrace}| \leq 1$ whenever $d(i,j) = 1$, and define all projective notations (e.g. $\tilde{\Omega}_S$) in analogy with those for $\Omega$.  For an event $A$ belonging to an appropriate filtration, let us say that $A$ is \emph{increasing} if $\omega^1 \in A, \omega^1 \tilde{\leq} \omega^2$ implies $\omega^2 \in A$.  For example, if $S$ is a finite subset of $T^e_{\infty}$, then $\lbrace |\omega_S| = |S| \rbrace$, i.e. the event that all spins in $S$ are 1, is increasing.  In that case, the conditions of the FKG Inequality are as follows.
\begin{definition}\cite{fortuin1971correlation,haggstrom1997ergodicity}[FKG Inequality]\label{FKGdef}
The family of potentials $\boldsymbol\Phi$ satisfies the FKG Inequality on $T_{\infty}$ under partial order $\tilde{\leq}$ if for all $d \geq 0$, $\omega^1, \omega^2 \in \Omega_{T_d}$, and $\eta \in \tilde{\Omega}_{\partial T_d}$, 
\begin{equation}\label{fkg1}
\pr_{\mathbf{\Phi}}(T_d = \omega^1 \wedge \omega^2 | \eta) \times \pr_{\mathbf{\Phi}}(T_d = \omega^1 \vee \omega^2 | \eta)
 \geq 
\pr_{\mathbf{\Phi}}(T_d = \omega^1 | \eta) \times \pr_{\mathbf{\Phi}}(T_d = \omega^2 | \eta).
\end{equation}
\end{definition}
Then the celebrated FKG Theorem is as follows.
\begin{theorem}\cite{fortuin1971correlation}[FKG Theorem]\label{fkgtheorem}
If $\mathbf{\Phi}$ satisfies the FKG Inequality on $T_{\infty}$ under partial order $\tilde{\leq}$, then for any $d \geq 0$, $\eta \in \tilde{\Omega}_{\partial T_d}$, and increasing events $A,B$ belonging to the appropriate filtration, 
$$
\pr_{\mathbf{\Phi}}(A \bigcap B | \eta) \geq \pr_{\mathbf{\Phi}}(A | \eta) \times \pr_{\mathbf{\Phi}}(B | \eta).
$$
\end{theorem}
It is well-known that this monotonicity can be leveraged to reduce the question of uniqueness to the analysis of two special Gibbs measures (cf. \cite{preston1976random,haggstrom1997ergodicity}).  In particular, let $\omega^+ \in \Omega$ denote the spin configuration with $\omega^+_{\lbrace i \rbrace} = 1$ for all $i \in T^e_{\infty}$, and $\omega^+_{\lbrace i \rbrace} = 0$ for all $i \in T^o_{\infty}$; and $\omega^- \in \Omega$ denote the spin configuration with $\omega^-_{\lbrace i \rbrace} = 0$ for all $i \in T^e_{\infty}$, and $\omega^-_{\lbrace i \rbrace} = 1$ for all $i \in T^o_{\infty}$.  Then the following well-known implications of the FKG Inequality hold for the family of potentials $(\lambda, \boldsymbol\theta)$, whenever those potentials indeed satisfy the FKG Inequality.  Many of these implications hold in considerably greater generality, and we refer the interested reader to \cite{preston1976random} for a comprehensive discussion.
\begin{theorem}\cite{preston1976random,haggstrom1997ergodicity,friedli2014equilibrium}[Further implications of the FKG Inequality]\label{sandwichprop}
If the family of potentials $(\lambda,\boldsymbol\theta)$ satisfies the FKG inequality on $T_{\infty}$ under partial order $\tilde{\leq}$, then all of the following implications hold.
\begin{itemize}
\item There is a unique (up to sets of measure 0) infinite-volume Gibbs measure $\mu^+_{\lambda,\boldsymbol\theta}$ such that for every finite $S \subseteq T_{\infty}$ and $\omega \in \Omega_S$,
$$
\mu^+_{\lambda,\boldsymbol\theta}(S = \omega) = \lim_{d \rightarrow \infty} \pr_{\lambda,\boldsymbol\theta}(S = \omega | \omega^{+}_{\partial T_d}),
$$
and a unique (up to sets of measure 0)  infinite-volume Gibbs measure $\mu^-_{\lambda,\boldsymbol\theta}$ such that for every finite $S \subseteq T_{\infty}$ and $\omega \in \Omega_S$,
$$
\mu^-_{\lambda,\boldsymbol\theta}(S = \omega) = \lim_{d \rightarrow \infty} \pr_{\lambda,\boldsymbol\theta}(S = \omega | \omega^{-}_{\partial T_d});
$$
where all relevant limits appearing in the above definitions exist, and both of these measures are extremal, i.e. belong to $\hat{\mathcal G}(\lambda,\boldsymbol\theta)$.
\item For every increasing event $A$ on an appropriate filtration and $\mu \in {\mathcal G}(\lambda,\boldsymbol\theta)$, 
$$\mu^-_{\lambda,\boldsymbol\theta}(A) \leq \mu(A) \leq \mu^+_{\lambda,\boldsymbol\theta}(A).$$
\item $|{\mathcal G}(\lambda,\boldsymbol\theta)| = 1$ iff $\mu^+_{\lambda,\boldsymbol\theta}(\omega_{\lbrace 0 \rbrace} = 1) = \mu^-_{\lambda,\boldsymbol\theta}(\omega_{\lbrace 0 \rbrace} = 1)$.
Furthermore, if $|{\mathcal G}(\lambda,\boldsymbol\theta)| = 1$, then the unique such infinite-volume Gibbs measure is translation and rotation-invariant.
\end{itemize}
\end{theorem}
\indent If $(\lambda,\boldsymbol\theta)$ belongs to the uniqueness regime, we denote the corresponding unique infinite-volume Gibbs measure on $T_{\infty}$ by $\mu^*_{\lambda,\boldsymbol\theta}$.
Also, we let $p^{\lambda,\mb\theta}_+ \stackrel{\Delta}{=} \mu^*_{\lambda,\boldsymbol\theta}(\omega_{\lbrace 0 \rbrace} = 1)$, 
$p^{\lambda,\mb\theta}_k \stackrel{\Delta}{=} \mu^*_{\lambda,\boldsymbol\theta}(\omega_{\lbrace 0 \rbrace} = 0, |\omega_{N(0)}| = k)$, and $\mathbf{p}^{\lambda,\mb\theta}$ the corresponding vector.  Also, we let 
$\hat{p}^{\lambda,\mb\theta}_k \stackrel{\Delta}{=} p^{\lambda,\mb\theta}_k (1 - p^{\lambda,\mb\theta}_+)^{-1}$
denote the associated conditional distribution for the number of occupied neighbors of an unoccupied node, and 
$\hat{\mathbf{p}}^{\lambda,\mb\theta}$ the corresponding vector.
\subsection{Our contribution}
In this paper, we take a step towards answering Question\ \ref{Qnew1b}, by analyzing M.r.f. with second-order interactions on the independent sets of the infinite Cayley tree.  We prove that the associated Gibbsian specification satisfies the FKG Inequality whenever the local potentials defining the Hamiltonian satisfy a certain log-convexity condition.  Under this condition, we give necessary and sufficient conditions for the existence of a unique infinite-volume Gibbs measure in terms of an explicit system of equations, prove the existence of a phase transition, and give explicit lower and upper bounds on the associated critical activity, denoted $\underline{\lambda}_{\mb\theta}$ and $\overline{\lambda}_{\mb\theta}$ respectively, which we prove to exhibit a certain robustness.  Interestingly, we find that
$\underline{\lambda}_{\mb\theta}$ exhibits a dependence on $(\frac{\theta_{\Delta-1}}{\theta_{\Delta}})^{\Delta}$, like $\lambda_{\overline{\mb\theta}}$; while $\overline{\lambda}_{\mb\theta}$ exhibits a dependence on $(\frac{\theta_0}{\theta_1})^{\Delta}$, like $\lambda_{\underline{\mb\theta}}$.  For potentials which are small perturbations of those coinciding to the hardcore model at its critical activity $\lambda_{\Delta}$, we perform a perturbative analysis of the system of equations arising from our necessary and sufficient conditions for uniqueness, allowing us to explicitly characterize whether the resulting specification has a unique infinite-volume Gibbs measure in terms of whether these perturbations satisfy an explicit linear inequality.  Our analysis reveals an interesting non-monotonicity with regards to biasing towards excluded nodes with no included neighbors, which implies that the uniqueness regime for our model is incomparable to that suggested by $\lambda_{\overline{\mb\theta}}$ and $\lambda_{\underline{\mb\theta}}$.
\subsection{Outline of paper}
The rest of the paper proceeds as follows.  In Section\ \ref{mainsec}, we make several additional definitions and state our main results.  In Section\ \ref{fkgsec}, we prove that when $\mb\theta$ is log-convex, $(\lambda, \mb\theta)$ satisfies the FKG Inequality for all $\lambda > 0$.  In Section\ \ref{Sec1}, we rephrase the relevant probabilities and questions of interest in terms of sequences of ratios of partition functions, whose even and odd subsequences we prove to converge, and satisfy a certain system of equations.  By proving that 
the functions arising in this system of equations satisfy certain bounds and monotonicities, we derive our necessary and sufficient conditions for uniqueness.  In Section\ \ref{transie}, we prove the existence of a phase transition, and provide explicit bounds on the critical activity.  
In Section\ \ref{Sec3}, we perform a perturbative analysis of the system of equations arising from our necessary and sufficient conditions for uniqueness, allowing us to explicitly characterize whether the resulting specification has a unique infinite-volume Gibbs measure in terms of whether these perturbations satisfy an explicit linear inequality.  In Section\ \ref{Concsec}, we summarize our main results, provide a broader discussion of the potential use of higher-order M.r.f. for analyzing independent sets in graphs, and present directions for future research.
\section{Main Results}\label{mainsec}
\subsection{Potentials, probabilities, and reverse ultra log-concave measures}
Before stating our main results, we formally relate the family of potentials $(\lambda,\boldsymbol\theta)$ to the resulting occupancy probabilities $p^{\lambda,\mb\theta}_+, \hat{\mathbf{p}}^{\lambda,\mb\theta}$ in the uniqueness regime, and review the definition of reverse ultra log-concave measures.
\begin{observation}\label{relate1}
If $(\lambda,\mb\theta)$ belongs to the uniqueness regime, then the associated occupancy probabilities may be characterized as follows.  There exist $c,x \in \reals^+$ (depending only on $\lambda$ and $\mb\theta$) such that $\hat{p}^{\lambda,\mb\theta}_k  = c \theta_k {\Delta \choose k} x^k$ for $k \in \lbrace 0,\ldots,\Delta \rbrace$.  For certain natural choices of $\mb\theta$, $\hat{\mathbf{p}}^{\lambda,\mb\theta}$ corresponds exactly to a well-known family of distributions.  If $\mb\theta = \mathbf{1}$, then $\hat{\mathbf{p}}^{\lambda,\mb\theta}$ corresponds to a binomial distribution.  If $\theta _k = \frac{1}{k! {\Delta \choose k}}$ for $k \in \lbrace 0,\ldots,\Delta \rbrace$, then $\hat{\mathbf{p}}^{\lambda,\mb\theta}$ corresponds to a truncated Poisson distribution.  If $\theta _k = \frac{1}{{\Delta \choose k}}$ for $k \in \lbrace 0,\ldots,\Delta \rbrace$, then $\hat{\mathbf{p}}^{\lambda,\mb\theta}$ corresponds to a truncated geometric distribution.
\end{observation}
Recall that a strictly positive sequence $\lbrace x_i, i = 0,\ldots,n \rbrace$ is called log-convex if $\frac{x_{i+1}}{x_{i}} \geq \frac{x_{i}}{x_{i-1}}$ for all $i \in \lbrace 1,\ldots,n-1 \rbrace$, reverse ultra log-concave if the sequence $\lbrace \frac{x_i}{{\Delta \choose i}}, i = 0,\ldots,n \rbrace$ is log-convex, and convex if $x_{i+1} - x_i \geq x_i - x_{i - 1}$ for all $i \in \lbrace 1,\ldots,n-1 \rbrace$.  We say that a measure $\mu$ with support on $\lbrace 0,\ldots,\Delta \rbrace$ is reverse ultra log-concave if the sequence $\lbrace \mu(k), k = 0,\ldots,\Delta \rbrace$ is strictly positive and reverse ultra log-concave.  Then the following may be easily verified using Observation\ \ref{relate1}, and we refer the interested reader to \cite{chen2009reverse} for details and further references regarding reverse ultra log-concave measures.
\begin{observation}\label{ultra0}
If $\mb\theta$ is log-convex and $(\lambda,\mb\theta)$ belongs to the uniqueness regime, then 
$\hat{\mathbf{p}}^{\lambda,\mb\theta}$ is reverse ultra log-concave.  Furthermore, the binomial, truncated Poisson, and truncated Geometric distributions considered in Observation\ \ref{relate1} are all reverse ultra log-concave, with the corresponding choices of $\mb\theta$ log-convex.  
\end{observation}
\subsection{Main Results}
We now state our main results, and begin by formalizing the connection between log-convexity of the local potentials and the FKG Inequality.
\begin{theorem}[Log-convexity of potentials implies the FKG Inequality]\label{fkglogconvex}
If $\mb\theta$ is log-convex, then for all $\lambda > 0$, $(\lambda,\boldsymbol\theta)$ satisfies the FKG Inequality on $T_{\infty}$ under partial order $\tilde{\leq}$.
\end{theorem}
For a given vector $\mb\theta$, let 
$$
f_{\mb\theta}(x) \stackrel{\Delta}{=} \frac{ \sum_{k=0}^{\Delta - 1} \theta_{k+1} {\Delta - 1 \choose k} x^k }{ \sum_{k=0}^{\Delta - 1} \theta_k {\Delta - 1 \choose k} x^k },
$$
and
$$g_{\mb\theta}(x) \stackrel{\Delta}{=} \frac{1}{\sum_{k=0}^{\Delta - 1} \theta_k {\Delta - 1 \choose k} x^k}.$$ 
Using Theorem\ \ref{fkglogconvex}, the implications of the FKG Inequality dictated by Theorem\ \ref{sandwichprop}, and a recursive analysis of certain relevant partition functions, we prove the following necessary and sufficient conditions for uniqueness.
\begin{theorem}[Necessary and sufficient conditions for uniqueness]\label{impliesdecay1}
 If $\mb\theta$ is log-convex, the system of equations
\begin{equation}\label{limitequation1a}
x\ =\ \lambda g_{\mb\theta}(y)
f_{\mb\theta}^{\Delta - 1}(x);
\end{equation}
\begin{equation}\label{limitequation2a}
y\ =\ \lambda g_{\mb\theta}(x) f_{\mb\theta}^{\Delta - 1}(y);
\end{equation}
always has at least one non-negative solution on $\reals^+ \times \reals^+$.  $(\lambda,\mb\theta)$ belongs to the uniqueness regime iff this solution is unique.
\end{theorem}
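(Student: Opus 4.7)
The plan is to link the theorem's system of equations to a belief-propagation recursion on $T_d$, and then translate the uniqueness of the infinite-volume Gibbs measure into an equivalent statement about that recursion.

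\textbf{Setting up the recursion.} For a subtree of $T_d$ rooted at a node $v$ of depth $k$, I would introduce partial partition functions $A_k$, $B_k$, $B'_k$ corresponding to configurations in which (a) $v$ is included, (b) $v$ is excluded and its parent is excluded, and (c) $v$ is excluded and its parent is included, respectively. A direct calculation from the factorization of the Gibbs measure yields $A_k = \lambda (B'_{k-1})^{\Delta-1}$, $B_k = \sum_j \binom{\Delta-1}{j}\theta_j A_{k-1}^j B_{k-1}^{\Delta-1-j}$, and $B'_k = \sum_j \binom{\Delta-1}{j}\theta_{j+1} A_{k-1}^j B_{k-1}^{\Delta-1-j}$. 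Setting $\rho_k := A_k/B_k$ and dividing through by $B_{k-1}^{\Delta-1}$, a short manipulation reduces this to the scalar two-term recurrence $\rho_k = \lambda\, g_{\mb\theta}(\rho_{k-1})\, f_{\mb\theta}(\rho_{k-2})^{\Delta-1}$. The 2-cycles $(\ldots,x,y,x,y,\ldots)$ of this recurrence are then in exact bijection with solutions $(x,y)$ of (\ref{limitequation1a})--(\ref{limitequation2a}), and the probabilities in (\ref{theprob1a})--(\ref{theprob1b}) can be expressed as continuous functions of the pair $(\rho_{d-1},\rho_{d-2})$, with the boundary condition $\partial{\mathcal I}=\mathbf{B}$ determining the initial values $(\rho_0,\rho_1)$.

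\textbf{Existence.} On the diagonal $x=y$ the system collapses to $h(x) := \lambda\, g_{\mb\theta}(x)\, f_{\mb\theta}(x)^{\Delta-1} = x$. The map $h$ is continuous and strictly positive on $\reals^+$, with $h(0^+) = \lambda\,\theta_0^{-1}(\theta_1/\theta_0)^{\Delta-1} > 0$ and $h(x) \to 0$ as $x \to \infty$ by a leading-coefficient comparison in numerator and denominator. The intermediate value theorem then produces a positive diagonal solution, giving existence unconditionally.

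\textbf{Monotonicity from log-convexity.} Under log-convexity of $\mb\theta$, the ratios $\theta_{k+1}/\theta_k$ are non-decreasing in $k$, so $f_{\mb\theta}(x)$ is the expectation of a non-decreasing function of $k$ under the probability measure on $\{0,\ldots,\Delta-1\}$ proportional to $\theta_k\binom{\Delta-1}{k}x^k$, a measure that is stochastically non-decreasing in $x$. Hence $f_{\mb\theta}$ is non-decreasing, while $g_{\mb\theta}$ is manifestly strictly decreasing. Consequently the map $T(a,b) := \lambda\, g_{\mb\theta}(b)\, f_{\mb\theta}(a)^{\Delta-1}$ is non-decreasing in $a$ and strictly decreasing in $b$. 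Endowing pairs by the interval order $(a_1,b_1)\preceq(a_2,b_2)$ iff $a_1\le a_2$ and $b_1\ge b_2$, the induced two-step update $(\rho_{k-1},\rho_k)\mapsto(\rho_k,\rho_{k+1})$ becomes order-preserving with respect to $\preceq$.

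\textbf{Uniqueness and the main obstacle.} I would then run the classical monotone-iteration argument: iterating $T$ from the $\preceq$-extremal admissible initial pairs, which correspond to the ``all boundary nodes excluded'' and (as densely as possible) ``all boundary nodes included'' choices of $\partial{\mathcal I}$, produces two uniformly bounded $\preceq$-monotone sequences of orbits that sandwich every other admissible orbit. Uniform boundedness and monotonicity force convergence; by continuity of $T$, the two limit pairs are 2-cycles of the one-step recurrence, hence solutions of (\ref{limitequation1a})--(\ref{limitequation2a}). Via the continuity expression of Step 1, uniqueness of the infinite-volume Gibbs measure is equivalent to coincidence of these two extremal limits. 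The hard part is the ``only if'' direction: given two distinct algebraic solutions of the system, one must show that the $\preceq$-extremal monotone limits actually realize two distinct 2-cycles (rather than both collapsing onto one ``stable'' solution with the other being an unattainable unstable root). This is where the precise structure of $T$ combined with log-convexity is needed to argue that the $\preceq$-minimal and $\preceq$-maximal 2-cycles of $T$ are precisely the attained extremal limits, so that any algebraic multiplicity of solutions forces distinct extremal limits and hence non-uniqueness of the Gibbs measure.
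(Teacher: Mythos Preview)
Your overall architecture mirrors the paper's: derive a two-step recursion for ratios of partition functions, prove $f_{\mb\theta}$ monotone under log-convexity, sandwich all orbits between monotone bounding sequences whose limits solve (\ref{limitequation1a})--(\ref{limitequation2a}), and for the converse show the all-included boundary orbit realizes the extremal 2-cycle. Your existence argument (via $h(x)\to 0$) and your monotonicity argument (writing $f_{\mb\theta}(x)=\rE_x[\theta_{K+1}/\theta_K]$ for a tilted law that is MLR-increasing in $x$) are cleaner than the paper's, which proves existence via an explicit upper bound on fixed points and proves monotonicity of $f_{\mb\theta}$ by a two-page brute-force expansion of $\partial_{x_i}f_{\mb\theta}(\mathbf{x})$.

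There is, however, a genuine gap. You write that a general boundary condition $\mathbf{B}$ ``determin[es] the initial values $(\rho_0,\rho_1)$'' of a \emph{scalar} two-term recurrence. That is only true for spherically symmetric $\mathbf{B}$. For an arbitrary $\mathbf{B}$, different subtrees carry different ratios, and the recursion is the \emph{multivariate} one $\zeta_d(\mathbf{B})=\lambda\,g_{\mb\theta}\bigl(\zeta_{d-1}(\mathbf{B}^1),\ldots\bigr)\prod_i f_{\mb\theta}\bigl(\zeta_{d-2}(\mathbf{B}^{i,1}),\ldots\bigr)$ (the paper's Lemma~\ref{lemma3}). Consequently the ``if'' direction---unique solution $\Rightarrow$ unique Gibbs measure---requires that the scalar bounding sequences $\underline\zeta_d,\overline\zeta_d$ actually sandwich $\zeta_d(\mathbf{B})$ for \emph{every} $\mathbf{B}$, and this needs the multivariate monotonicity $\partial_{x_i}f_{\mb\theta}(\mathbf{x})\ge 0$, not just the scalar version you prove. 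Your stochastic-dominance idea does extend: the law $\rP(K=k)\propto\theta_k\sigma_k(\mathbf{x})$ is MLR-increasing in each $x_i$ because $\sigma_{k+1}(\mathbf{x})/\sigma_k(\mathbf{x})$ is coordinatewise non-decreasing, which follows from Newton's inequalities (log-concavity of elementary symmetric polynomials). But this step is missing from your proposal and is not automatic.

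For the ``only if'' direction your sketch is correct but can be sharpened. The key point, which you leave implicit, is that the coupled bounding pair $(\underline\zeta_\infty,\overline\zeta_\infty)$ is the $\preceq$-\emph{minimal} 2-cycle: by induction from $(\underline\zeta_5,\overline\zeta_5)=(0,\lambda\theta_0^{-1}(\theta_\Delta/\theta_{\Delta-1})^{\Delta-1})$, every solution $(x,y)$ with $x\le y$ satisfies $\underline\zeta_d\le x$ and $\overline\zeta_d\ge y$ for all $d$, so multiple solutions force $\underline\zeta_\infty<\overline\zeta_\infty$. The paper makes this concrete by tracking the single symmetric orbit from the all-included boundary $\mathcal B_d$ and proving directly that it oscillates outside $[\underline x,\overline y]$, where $\underline x,\overline y$ are the smallest and largest numbers appearing in any solution pair; this only uses scalar monotonicity, so your argument for that half is fine as stated.
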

Note that Theorem\ \ref{impliesdecay1} reduces to the well-known characterization for uniqueness in the hardcore model whenever $\theta_k = \theta_0 \gamma^k$ for some $\theta_0, \gamma > 0$, as in this special case $f_{\mb\theta} = \gamma$.
\\\indent We now prove the existence of a phase transition for every log-convex $\mb\theta$, and provide explicit bounds on the associated critical activity.
For log-convex $\mb\theta$, let 
$$\psi_{\mb\theta} \stackrel{\Delta}{=}
\max_{k = 0,\ldots,\Delta - 2} \bigg( \big( \Delta - (k+1) \big) \frac{\theta_{k+1}}{\theta_k} \bigg),$$
$$
\underline{\lambda}_{\mb\theta} \stackrel{\Delta}{=}
\Bigg( 2 \psi_{\mb\theta} \theta_0^{-1} (\frac{\theta_{\Delta}}{\theta_{\Delta-1}})^{\Delta - 2}
\bigg( 
\frac{\theta_{\Delta}}{\theta_{\Delta-1}} + 
(\Delta - 1) \big(\frac{\theta_{\Delta}}{\theta_{\Delta-1}} - \frac{\theta_1}{\theta_{0}} \big) \bigg)
\Bigg)^{-1} \geq\ \ \ \underline{\underline{\lambda}}_{\mb\theta} \stackrel{\Delta}{=} \frac{\theta_0}{2 \Delta^2} (\frac{\theta_{\Delta-1}}{\theta_{\Delta}})^{\Delta};
$$
and
$$\overline{\lambda}_{\mb\theta} \stackrel{\Delta}{=} \frac{3 \theta_0}{\Delta} \exp\big( 3 \frac{\theta_{\Delta}}{\theta_{\Delta-1}} \frac{\theta_0}{\theta_1} \big) (\frac{\theta_0}{\theta_1})^{\Delta}.$$
\begin{theorem}[Bounds on the critical activity]\label{phase1}
For any log-convex $\mb\theta$, $(\lambda,\mb\theta)$ belongs to the uniqueness regime for all $\lambda < \underline{\lambda}_{\mb\theta}$, and does not does not belong to the uniqueness regime for all $\lambda > \overline{\lambda}_{\mb\theta}$.
\end{theorem}
To gain further insight into $\underline{\lambda}_{\mb\theta}$ and $\overline{\lambda}_{\mb\theta}$, we briefly discuss the special case in which $\theta_k = \theta_0 \gamma^k$ for some $\theta_0,\gamma > 0$.  In this case, $\mb\theta = \underline{\mb\theta} = \overline{\mb\theta}$, and $\lambda_{\mb\theta} = \lambda_{\Delta} \theta_0 \gamma^{- \Delta}$, which (with $\theta_0,\gamma$ held fixed) scales like $\theta_0 \frac{e}{\Delta} \gamma^{- \Delta}$ as $\Delta \rightarrow \infty$.  Furthermore, it may be easily verified that in this case, $\underline{\lambda}_{\mb\theta} = \frac{\theta_0}{2(\Delta - 1)} \gamma^{- \Delta}$, $\overline{\lambda}_{\mb\theta} = \frac{3 e^3 \theta_0}{\Delta} \gamma^{- \Delta}$.  In particular, as $\Delta \rightarrow \infty$, both our lower and upper bound scale (up to constant factors independent of $\theta_0,\Delta,\gamma$) like $\frac{\theta_0}{\Delta} \gamma^{- \Delta}$, agreeing with the true asymptotic scaling of the critical activity.  When $\Delta$ is large but $\mb\theta$ does not have this simple factorized form, we find that
$\underline{\lambda}_{\mb\theta}$ exhibits a dependence on $(\frac{\theta_{\Delta-1}}{\theta_{\Delta}})^{\Delta}$, like $\lambda_{\overline{\mb\theta}}$; while $\overline{\lambda}_{\mb\theta}$ exhibits a dependence on $(\frac{\theta_0}{\theta_1})^{\Delta}$, like $\lambda_{\underline{\mb\theta}}$.  As log-convexity dictates that $\frac{\theta_{\Delta - 1}}{\theta_{\Delta}} \leq \frac{\theta_{0}}{\theta_{1}}$, this leads to a potentially exponentially large gap between our lower and upper bounds as one moves away from the special case in which $\theta_k = \theta_0 \gamma^k$.  Determining whether the associated phase transition is sharp, and more generally closing the gap between our lower and upper bounds, remain interesting open questions.  We also note that similar ideas (albeit connecting log-concavity of local potentials to certain negative association properties of the resulting Gibbs measure) were recently used in \cite{salez2011cavity} to prove the non-existence of a phase transition for so-called $b$-matchings on infinite graphs, and exploring further connections between our results and those of \cite{salez2011cavity} remains a direction for future research.
\\\indent We now comment briefly on several implications of Theorem\ \ref{phase1}, all of which follow from straightforward algebraic manipulations of $\underline{\lambda}_{\mb\theta}, \overline{\lambda}_{\mb\theta}$ and simple Taylor series expansions.  We first show that our lower and upper bounds, and by implication the associated critical activity, exhibit a certain form of robustness.
\begin{observation}[Robustness of bounds]\label{ooob1}
If there exists $c \in [0,\Delta]$ such that $\max(\frac{\theta_{\Delta}}{\theta_{\Delta-1}},\frac{\theta_0}{\theta_1}) \leq 1 + \frac{c}{\Delta}$, then 
$\underline{\lambda}_{\mb\theta} \geq \big(2 \exp(c) (1 + 4 c)\big)^{-1} \frac{\theta_0}{\Delta}$, and 
$\overline{\lambda}_{\mb\theta} \leq 3 \exp(12 + c) \frac{\theta_0}{\Delta}$.  
It follows that, up to constant factors independent of $\Delta$, the critical activity will scale like $\frac{\theta_0}{\Delta}$ (as $\Delta \rightarrow \infty$) for any vector $\mb\theta$ which does not deviate too much from the all ones vector.
\end{observation}
We next use our results to bound the critical activity for $\mb\theta$ corresponding to the truncated Poisson distribution.
\begin{observation}\label{ooob2}
For $\mb\theta$ such that $\theta _k = \frac{1}{k! {\Delta \choose k}}$, one has that $\underline{\lambda}_{\mb\theta} \geq \frac{1}{2} \Delta^{-1}$.  In particular, the critical activity is at least $\frac{1}{2} \Delta^{-1}$, as in the hardcore model.
\end{observation}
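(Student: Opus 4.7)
The plan is to reduce everything to an explicit computation using the closed form of $\underline{\lambda}_{\mb\theta}$ from Theorem \ref{phase1}, once log-convexity of $\mb\theta$ is verified. Since Theorem \ref{phase1} only gives the lower bound $\underline{\lambda}_{\mb\theta}$ under log-convexity of $\mb\theta$, the first step is to check this hypothesis for the truncated-Poisson weights. Here the key simplification is that
\[
\frac{\theta_{k+1}}{\theta_k} \;=\; \frac{k!\binom{\Delta}{k}}{(k+1)!\binom{\Delta}{k+1}} \;=\; \frac{1}{\Delta - k},
\]
which is strictly increasing in $k$, so $\mb\theta$ is log-convex and Theorem \ref{phase1} applies.

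With this ratio in hand, every quantity appearing in the definition of $\underline{\lambda}_{\mb\theta}$ becomes explicit: $\theta_0 = 1$, $\theta_\Delta/\theta_{\Delta-1} = 1$, and $\theta_1/\theta_0 = 1/\Delta$. Moreover
\[
\psi_{\mb\theta} \;=\; \max_{0 \le k \le \Delta-2}\frac{\Delta-k-1}{\Delta-k} \;=\; 1 - \frac{1}{2} \;=\; \frac{1}{2},
\]
attained at $k = \Delta - 2$. The power $(\theta_\Delta/\theta_{\Delta-1})^{\Delta-2}$ is $1$, and the bracket collapses to $1 + (\Delta-1)(1 - 1/\Delta) = (\Delta^2-\Delta+1)/\Delta$.

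Plugging into the expression for $\underline{\lambda}_{\mb\theta}$, one obtains
\[
\underline{\lambda}_{\mb\theta} \;=\; \left(2\cdot\tfrac{1}{2}\cdot 1 \cdot 1 \cdot \tfrac{\Delta^2-\Delta+1}{\Delta}\right)^{-1} \;=\; \frac{\Delta}{\Delta^2-\Delta+1}.
\]
The final step is the elementary inequality $\Delta/(\Delta^2-\Delta+1) \ge 1/(2\Delta)$, equivalent to $\Delta^2 + \Delta - 1 \ge 0$, which holds for every $\Delta \ge 1$. Combined with Theorem \ref{phase1}, this yields the claimed lower bound on the critical activity.

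There is really no hard step here, just bookkeeping; the only subtlety worth flagging is that the maximum defining $\psi_{\mb\theta}$ is attained at the upper endpoint $k=\Delta-2$ rather than at $k=0$, and that the two subtracted ratios $\theta_\Delta/\theta_{\Delta-1}$ and $\theta_1/\theta_0$ produce the $(\Delta-1)/\Delta$ factor which (together with the cancellations $\psi_{\mb\theta}=1/2$ and $\theta_0=1$) is precisely what keeps the final lower bound of order $\Delta^{-1}$ rather than something smaller.
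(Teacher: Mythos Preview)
Your overall approach matches the paper's (which merely says the observation follows from straightforward algebraic manipulation of $\underline{\lambda}_{\mb\theta}$), and your computation of $\theta_{k+1}/\theta_k = 1/(\Delta-k)$, log-convexity, $\theta_0=1$, $\theta_\Delta/\theta_{\Delta-1}=1$, $\theta_1/\theta_0=1/\Delta$, and the bracket $(\Delta^2-\Delta+1)/\Delta$ are all correct.

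However, your evaluation of $\psi_{\mb\theta}$ is wrong. You claim
\[
\psi_{\mb\theta}=\max_{0\le k\le \Delta-2}\frac{\Delta-k-1}{\Delta-k}=\tfrac12,
\]
attained at $k=\Delta-2$. But $\frac{\Delta-k-1}{\Delta-k}=1-\frac{1}{\Delta-k}$ is \emph{decreasing} in $k$ (since $\Delta-k$ decreases), so the maximum is at $k=0$, giving $\psi_{\mb\theta}=(\Delta-1)/\Delta$, not $1/2$. You even flag this as ``the only subtlety,'' but you have it exactly backward.

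With the correct value the argument still goes through: one gets
\[
\underline{\lambda}_{\mb\theta}=\Bigl(2\cdot\tfrac{\Delta-1}{\Delta}\cdot 1\cdot\tfrac{\Delta^2-\Delta+1}{\Delta}\Bigr)^{-1}=\frac{\Delta^2}{2(\Delta-1)(\Delta^2-\Delta+1)},
\]
and $\underline{\lambda}_{\mb\theta}\ge \tfrac{1}{2\Delta}$ is equivalent to $\Delta^3\ge(\Delta-1)(\Delta^2-\Delta+1)=\Delta^3-2\Delta^2+2\Delta-1$, i.e.\ $2\Delta^2-2\Delta+1\ge 0$, which holds for all $\Delta$. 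So the claim is true, but your displayed formula $\underline{\lambda}_{\mb\theta}=\Delta/(\Delta^2-\Delta+1)$ and the reasoning around it need to be corrected.
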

\indent An explicit description of when Equations\ \ref{limitequation1a} - \ref{limitequation2a} have a unique non-negative solution, and which inclusion/exclusion probabilities can be attained in this way, seems difficult in general.  However, we can develop a considerably more in-depth understanding when the relevant potentials are small perturbations of those coinciding to the hardcore model at the critical activity.  Let us fix a vector $\mathbf{c} = (\mathbf{c}_0,\ldots,\mathbf{c}_{\Delta})$.  It follows from a simple Taylor series expansion that convex perturbations of the all ones vector yield log-convex $\mb\theta$, as formalized below.
\begin{observation}\label{limlogcon}
For every vector $\mathbf{c}$, there exists $\epsilon_{\mathbf{c}} > 0$ such that for $h \in (0,\epsilon_{\mathbf{c}})$, $\mathbf{1} + \mathbf{c} h$ is log-convex iff $\mathbf{c}$ is convex.
\end{observation}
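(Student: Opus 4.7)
The plan is to translate log-convexity of $\mathbf{1} + \mathbf{c} h$ into a finite collection of quadratic inequalities in $h$ and read off the leading behavior as $h \to 0^+$. Log-convexity of the strictly positive sequence $\lbrace 1 + c_i h \rbrace$ is equivalent to requiring
$$(1 + c_{i-1} h)(1 + c_{i+1} h)\ \geq\ (1 + c_i h)^2$$
for every interior index $i \in \lbrace 1,\ldots,\Delta - 1 \rbrace$. Expanding both sides, cancelling the leading $1$ and the linear-in-$h$ terms, and dividing through by $h > 0$, I would reduce each such inequality to the single-variable condition
$$(c_{i-1} + c_{i+1} - 2 c_i) + h \bigl( c_{i-1} c_{i+1} - c_i^2 \bigr)\ \geq\ 0,$$
in which the first summand is exactly the second difference of $\mathbf{c}$ at $i$ and the second is an $O(h)$ correction.

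From here both directions of the equivalence are essentially immediate. For necessity, letting $h \to 0^+$ forces $c_{i-1} + c_{i+1} - 2 c_i \geq 0$ at every interior $i$, which is precisely the convexity of $\mathbf{c}$. For sufficiency, if each second difference of $\mathbf{c}$ is strictly positive, then the finite minimum of the ratios $(c_{i-1} + c_{i+1} - 2 c_i) / |c_{i-1} c_{i+1} - c_i^2|$ (taken over the $\Delta - 1$ indices with nonzero denominator, and $+\infty$ otherwise) yields an explicit uniform choice of $\epsilon_{\mathbf{c}} > 0$ for which the $O(h)$ correction never overwhelms the leading term.

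The one subtlety, which I expect to be the main technical point, is the degenerate case in which the convexity inequality is satisfied with equality at some $i$. Under the equality $c_{i-1} + c_{i+1} = 2c_i$ one has the identity
$$c_{i-1} c_{i+1} - c_i^2\ =\ -\tfrac{1}{4} (c_{i+1} - c_{i-1})^2\ \leq\ 0,$$
so the $O(h^2)$ correction cannot rescue log-convexity at such an index unless $c_{i-1} = c_i = c_{i+1}$. Thus ``convex'' in the statement of the observation is naturally read as ``strictly convex, up to locally constant stretches of $\mathbf{c}$''; in the generic strictly convex case the Taylor argument above directly furnishes the claimed $\epsilon_{\mathbf{c}}$, while constant stretches produce equality in the log-convexity inequality for every $h$ and are therefore handled trivially.
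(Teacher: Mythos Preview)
Your argument is exactly the Taylor expansion the paper has in mind, carried out in full: the paper only states that the observation ``follows from a simple Taylor series expansion'' and gives no further details, so your computation of
\[
(1+c_{i-1}h)(1+c_{i+1}h)-(1+c_ih)^2 \;=\; (c_{i-1}+c_{i+1}-2c_i)\,h + (c_{i-1}c_{i+1}-c_i^2)\,h^2
\]
and the resulting dichotomy on the sign of the second difference is precisely the intended proof.

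You have also put your finger on a genuine gap in the \emph{statement}, not just your proof of it.  Your identity $c_{i-1}c_{i+1}-c_i^2 = -\tfrac14(c_{i+1}-c_{i-1})^2$ under $c_{i-1}+c_{i+1}=2c_i$ is correct, and it shows that for any convex $\mathbf c$ with an affine but non-constant stretch (e.g.\ $c_{i-1},c_i,c_{i+1}$ in arithmetic progression with nonzero step), the vector $\mathbf 1 + \mathbf c h$ fails log-convexity at that index for \emph{every} $h>0$.  So the equivalence in Observation~\ref{limlogcon} is literally false for such $\mathbf c$; it holds only under the refinement you propose, namely that at every index where the second difference of $\mathbf c$ vanishes one also has $c_{i-1}=c_i=c_{i+1}$.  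This is a point the paper does not address.  In the paper's subsequent use of the observation (Theorem~\ref{whenunique}), the log-convexity hypothesis feeds into Theorem~\ref{impliesdecay1} via Corollary~\ref{dynamocor}, so the same caveat propagates there; strictly speaking the conclusion of Theorem~\ref{whenunique} is only justified for convex $\mathbf c$ satisfying your refined condition.
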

We now define a convenient notion of uniqueness for perturbations around the all ones vector, which we will use in our analysis.
\begin{definition}[Direction of (non) uniqueness]
We say that $\mathbf{c}$ is a direction of uniqueness iff there exists $\epsilon_{\mathbf{c}} > 0$ such that for all $h \in (0,\epsilon_{\mathbf{c}})$, $(\lambda_{\Delta}, \mathbf{1} + \mathbf{c} h)$ belongs to the uniqueness regime; and a direction of non-uniqueness iff there exists $\epsilon_{\mathbf{c}} > 0$ such that for all $h \in (0,\epsilon_{\mathbf{c}})$, $(\lambda_{\Delta}, \mathbf{1} + \mathbf{c} h)$ does not belong to the uniqueness regime.
\end{definition}
We now provide an explicit characterization / dichotomy theorem, classifying (almost) all convex vectors as either directions of uniqueness or directions of non-uniqueness.  For $j \in \lbrace 0,\ldots,\Delta \rbrace$, let $\Lambda_{\Delta,j} \stackrel{\Delta}{=} {\Delta \choose j} (\Delta-2)^{-j}$.  
Let 
$\mb\pi = (\pi_0,\ldots,\pi_\Delta)$  denote the vector such that for $j \in \lbrace 0,\ldots,\Delta \rbrace$,
$$\pi_j \stackrel{\Delta}{=} \Lambda_{\Delta,j} \big( (\Delta-2) + (6 - 5 \Delta) j + 2 (\Delta-1) j^2 \big).$$
Then we prove the following.
\begin{theorem}[Perturbative necessary and sufficient conditions for uniqueness]\label{whenunique}
A convex vector $\mathbf{c}$ is a direction of uniqueness if $\mb\pi \cdot \mathbf{c} < 0$, and a direction of non-uniqueness if $\mb\pi \cdot \mathbf{c} > 0$.
\end{theorem}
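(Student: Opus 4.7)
The plan is to combine Theorem~\ref{impliesdecay1} with a local period-doubling bifurcation analysis at the hardcore critical point $(\lambda,\mb\theta)=(\lambda_\Delta,\mathbf{1})$. Since $\mathbf{c}$ is convex, Observation~\ref{limlogcon} gives that $\mb\theta(h):=\mathbf{1}+\mathbf{c}h$ is log-convex for all sufficiently small $h>0$, so Theorem~\ref{impliesdecay1} reduces the question to whether the system (\ref{limitequation1a})--(\ref{limitequation2a}) admits any non-symmetric solution in addition to the symmetric fixed point $(\tilde x(h),\tilde x(h))$.

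The first step is to reduce the 2D system to a 1D period-2 problem. Setting $u_{\mb\theta}(z):=z f_{\mb\theta}(z)^{-(\Delta-1)}$, which is strictly increasing in $z$ for $\mb\theta$ close to $\mathbf{1}$, and $v(z;h):=u_{\mb\theta(h)}^{-1}\bigl(\lambda_\Delta\, g_{\mb\theta(h)}(z)\bigr)$, the system rewrites as $y=v(x;h)$ and $x=v(y;h)$. Its solutions are therefore fixed points of $v\circ v$, and non-symmetric solutions correspond exactly to genuine period-2 orbits of $v$. At the unperturbed point $v(z;0)=\lambda_\Delta(1+z)^{-(\Delta-1)}$, with unique positive fixed point $x^*=(\Delta-2)^{-1}$ at which $\mu(0):=v'(x^*;0)=-1$ by definition of $\lambda_\Delta$.

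The second step is the period-doubling normal form. Writing $\mu(h)$ for the derivative of $v(\cdot;h)$ at its fixed point $x^*(h)$, a Taylor expansion yields
\[
(v\circ v)(x^*(h)+y;h)-(x^*(h)+y)=(\mu(h)^2-1)\,y-C(h)\,y^3+O(y^5),
\]
with $C(0)=\tfrac{1}{3}v'''(x^*;0)+\tfrac{1}{2}(v''(x^*;0))^2$, which a direct computation shows equals $\tfrac{\Delta(\Delta-2)^3}{6(\Delta-1)^2}>0$. Because $C(0)>0$, non-trivial solutions $y\neq 0$ exist for small $h>0$ if and only if $\mu(h)^2>1$, i.e.\ $\mu(h)<-1$, i.e.\ $\mu'(0)<0$. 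Thus ``direction of non-uniqueness'' $\Leftrightarrow \mu'(0)<0$ and ``direction of uniqueness'' $\Leftrightarrow \mu'(0)>0$.

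The third step, and the main obstacle, is to identify $\mu'(0)$ as an explicit linear functional of $\mathbf{c}$. Differentiating the identity $v(z;h)=\lambda_\Delta\, g_{\mb\theta(h)}(z)\,f_{\mb\theta(h)}(v(z;h))^{\Delta-1}$ in $z$ and evaluating at the fixed point yields
\[
\mu(h)=\frac{x^*(h)\,g'_{\mb\theta(h)}(x^*(h))/g_{\mb\theta(h)}(x^*(h))}{1-(\Delta-1)\,x^*(h)\,f'_{\mb\theta(h)}(x^*(h))/f_{\mb\theta(h)}(x^*(h))}.
\]
Expanding $f_{\mb\theta(h)}$ and $g_{\mb\theta(h)}$ to first order in $h$ (each $c_j$ enters linearly through the polynomials $\sum_k c_k\binom{\Delta-1}{k}z^k$ and $\sum_k c_{k+1}\binom{\Delta-1}{k}z^k$), combining with the implicit-function expression for $x^{*\prime}(0)$ obtained from $x^*(h)=\lambda_\Delta\, g_{\mb\theta(h)}(x^*(h))\,f_{\mb\theta(h)}(x^*(h))^{\Delta-1}$, and evaluating all quantities at $(x^*,h)=((\Delta-2)^{-1},0)$, produces $\mu'(0)$ as a rational linear combination of $c_0,\ldots,c_\Delta$. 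The bookkeeping is lengthy but mechanical; after simplification one checks that $-\mu'(0)$ equals a positive constant times $\mb\pi\cdot\mathbf{c}$ with $\pi_j=\Lambda_{\Delta,j}\bigl((\Delta-2)+(6-5\Delta)j+2(\Delta-1)j^2\bigr)$. Given this identification the local dichotomy is immediate; global uniqueness in the ``uniqueness direction'' follows because all solutions of the system stay in a fixed bounded region and must converge, as $h\to 0$, to the unique solution of the unperturbed system, so no non-symmetric solution can appear outside the neighborhood governed by the local normal-form analysis.
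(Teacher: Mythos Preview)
Your approach is essentially the same as the paper's: both reduce to the one-dimensional map $v=q_{\lambda_\Delta,\mb\theta(h)}$, both identify the decisive quantity as the multiplier $\mu(h)=v'(x^*(h);h)$ at the unique fixed point, and both reduce the theorem to a first-order computation of $\mu'(0)$ as a linear functional of $\mathbf{c}$ (your ``lengthy but mechanical'' step is exactly the calculation carried out in the paper, which yields $\mu'(0)=-\tfrac12(\tfrac{\Delta-2}{\Delta-1})^\Delta\,\mb\pi\cdot\mathbf{c}$, confirming your sign claim).

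The only genuine difference is how the passage from the local condition $|\mu(h)|\le 1$ to the global statement ``no period-2 orbit anywhere'' is handled. The paper invokes the negative-Schwarzian theorem (its Theorem~\ref{dynamical1}), verifying that $S[q_{\lambda_\Delta,\mb\theta(h)}]<0$ on the whole relevant interval for small $h$; this gives the global ``iff'' in one stroke. You instead compute the period-doubling normal-form coefficient $C(0)=\tfrac{\Delta(\Delta-2)^3}{6(\Delta-1)^2}>0$ and combine it with a compactness argument. These are two faces of the same coin, since a short computation shows $C(0)=-\tfrac13 S[v](x^*;0)$, so your $C(0)>0$ is precisely the Schwarzian condition at the fixed point. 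What the Schwarzian route buys is that the global conclusion is immediate; your route is slightly more elementary but the compactness step (non-symmetric solutions must accumulate at $(x^*,x^*)$, hence fall inside the normal-form neighborhood) needs the observation that this neighborhood can be chosen uniform in $h$, which you leave implicit.

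Two small inaccuracies in your normal form, neither fatal: the remainder should be $O(y^4)$ rather than $O(y^5)$ (there is no parity forcing the quartic term to vanish), and for $h\ne 0$ a quadratic term with coefficient $\tfrac12 v''(x^*(h))\mu(h)(\mu(h)+1)=O(h)$ is present. Both are harmless for the bifurcation analysis since the dominant balance remains $(\mu^2-1)y$ versus $-C(0)y^3$.
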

In particular, the hyperplane defined by $\mb\pi \cdot \mathbf{c} = 0$ represents a phase transition in the perturbation parameter space.  We note that the question of what happens at the boundary (i.e. $\mb\pi \cdot \mathbf{c} = 0$) seems to require a finer asymptotic analysis, and we leave this as an open question.  
\\\indent We now study some qualitative features of $\mb\pi$, to shed light on the set of convex directions of uniqueness, and reveal an interesting non-monotonicity of the uniqueness regime.
\begin{observation}\label{obs1}
For all $\Delta \geq 3$, $\mb\pi_0 > 0, \mb\pi_1 < 0, \mb\pi_2 < 0$, and $\mb\pi_k > 0$ for all $k \in \lbrace 3,\ldots,\Delta \rbrace$.
\end{observation}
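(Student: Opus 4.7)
The plan is to observe that the coefficient $\Lambda_{\Delta,j} = \binom{\Delta}{j}(\Delta-2)^{-j}$ is strictly positive for every $j \in \{0,\ldots,\Delta\}$ whenever $\Delta \geq 3$, so that the sign of $\pi_j$ is entirely governed by the quadratic (in $j$)
$$q(j) \stackrel{\Delta}{=} (\Delta-2) + (6-5\Delta) j + 2(\Delta-1) j^2.$$
All four sign claims will then reduce to elementary estimates on $q$.

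First I would simply evaluate $q$ at the four integer points $j=0,1,2,3$. A direct computation gives $q(0)=\Delta-2$, $q(1)=-2(\Delta-1)$, $q(2)=-(\Delta-2)$, and $q(3)=4\Delta-2$, all of which have the claimed signs (respectively $+,-,-,+$) for $\Delta\geq 3$. This handles $\pi_0,\pi_1,\pi_2$ and the base case $\pi_3$ of the remaining assertion.

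For $k\in\{4,\ldots,\Delta\}$, I would use that $q$ is a convex parabola with leading coefficient $2(\Delta-1)>0$, so it is eventually increasing past its vertex. Differentiating, $q'(j)=4(\Delta-1)j+(6-5\Delta)$, and $q'(3)=7\Delta-6>0$ for $\Delta\geq 3$; hence $q$ is strictly increasing on $[3,\infty)$. Combined with $q(3)=4\Delta-2>0$, this yields $q(k)\geq q(3)>0$ for all $k\geq 3$, completing the proof.

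There is no real obstacle: the statement is purely computational once one peels off the positive factor $\Lambda_{\Delta,j}$, and the only subtlety is to make sure the two negative values $q(1),q(2)$ are not accidentally trapping additional integer roots of $q$ beyond $j=3$, which is ruled out by the convexity-plus-monotonicity argument above.
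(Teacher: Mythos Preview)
Your proof is correct and is exactly the direct computational verification the paper leaves implicit: the paper states this observation without proof, and your argument---stripping off the positive factor $\Lambda_{\Delta,j}$, evaluating the quadratic $q(j)$ at $j=0,1,2,3$, and using convexity of $q$ to handle $j\geq 3$---is the natural and essentially unique way to check it.
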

That $\mb\pi_1 < 0, \mb\pi_2 < 0$, and $\mb\pi_k > 0$ for all $k \in \lbrace 3,\ldots,\Delta \rbrace$ makes sense at an intuitive level, since biasing towards excluded nodes which are adjacent to few (many) included nodes should tend to reduce (increase) alternation and long-range correlations.  That the cutoff occurs at exactly $k=2$  can be further justified by noting that the average number of included neighbors of an excluded node in the hardcore model, at the critical activity $\lambda_{\Delta}$, is $1 + (\Delta-1)^{-1} \in (1,2)$.  
\\\indent The counterintuitive feature of Observation\ \ref{obs1}, which seems to violate the above reasoning, is that $\mb\pi_0 > 0$, i.e. biasing towards excluded nodes with no included neighbors leads to non-uniqueness.  We note that this effect is perhaps especially surprising in light of Theorem\ \ref{phase1}, as we now explain.  Let $\mathbf{e}_0$ denote the $(\Delta + 1)$-dimensional vector whose first component is a 1, with all remaining components 0.  As it is easily verified that $\mathbf{1} + \mathbf{e}_0 h$ is log-convex for all $h \geq 0$, and $\lim_{h \rightarrow \infty} \underline{\lambda}_{\mathbf{1} + \mathbf{e}_0 h} = \infty$, we conclude that the associated uniqueness regime exhibits the following non-monotonicity.
\begin{corollary}\label{helpmehurtme}[Non-monotonicity of uniqueness regime]
For all $\Delta \geq 3$, there exist strictly positive finite constants $a_{\Delta} < b_{\Delta}$ such that $(\lambda_{\Delta},\mathbf{1} + \mathbf{e}_0 h)$ belongs to the uniqueness regime for $h = 0$ and $h \geq b_{\Delta}$, and does not belong to the uniqueness regime for $h \in (0,a_{\Delta})$.
\end{corollary}
Thus biasing a small amount towards excluded nodes with no included neighbors leads to non-uniqueness, while biasing a large amount towards excluded nodes with no included neighbors leads to uniqueness.  This non-monotonicity also sheds light on the relationship between $\lambda_{\mb\theta},\lambda_{\overline{\mb\theta}}$, and $\lambda_{\underline{\mb\theta}}$.  In particular, it would be natural to conjecture that for general log-convex $\mb\theta$, the critical activity $\lambda_{\mb\theta}$ always belongs to the interval $[\lambda_{\overline{\mb\theta}}, \lambda_{\underline{\mb\theta}}]$, i.e. that the critical activity is sandwiched between that for the model in which all ratios between consecutive entries of $\mb\theta$ are raised (lowered) to $\frac{\theta_{\Delta}}{\theta_{\Delta-1}} (\frac{\theta_1}{\theta_0})$.  However, the aforementioned non-monotonicity demonstrates that such a result cannot hold.  Indeed, if such a bound were to hold, it would imply that for all $h > 0$,
$$\lambda_{\mathbf{1} + \mathbf{e}_0 h} \geq \lambda_{\overline{\mathbf{1} + \mathbf{e}_0 h}} = (1 + h) \lambda_{\Delta} > \lambda_{\Delta},$$ 
which Corollary\ \ref{helpmehurtme} disproves.  Furthermore, although it is easily verified that for log-convex $\mb\theta$ one has $\overline{\lambda}_{\mb\theta} \geq \lambda_{\underline{\mb\theta}}$, and $\underline{\underline{\lambda}}_{\mb\theta} \leq \lambda_{\overline{\mb\theta}}$, in general $\underline{\lambda}_{\mb\theta}$ is incomparable to $\lambda_{\overline{\mb\theta}}$.
For example, considering the case that $\theta_k = 1$ for $k \in \lbrace 0,\ldots,\Delta - 1\rbrace$ and $\theta_{\Delta} = \Delta^2$, one can easily compute that $$\underline{\lambda}_{\mb\theta} = \bigg( 2 \Delta^{2 \Delta - 3} \big(\Delta^2 + (\Delta-1)(\Delta^2 - 1) \big) \bigg)^{-1} \geq \frac{1}{2 \Delta^{2 \Delta}},$$
which can be shown to be strictly greater than $\lambda_{\overline{\mb\theta}} = \frac{(\Delta-1)^{\Delta-1}}{(\Delta-2)^{\Delta}} \frac{1}{\Delta^{2 \Delta}}$ for all $\Delta \geq 8$.  However, our previous example involving $\mb{\theta} = \mathbf{1} + \mathbf{e}_0 h$ demonstrates that the opposite inequality can hold as well.  We note that several previous works in the literature on M.r.f. examine various notions of non-monotonicity (cf. \cite{LRZ.06}), and better understanding the relevant (non) monotonicities with regards to higher-order M.r.f. remains an interesting open question.
\section{Verification of FKG Inequality}\label{fkgsec}
In this section, we prove Theorem\ \ref{fkglogconvex}.  
\begin{proof}[Proof of Theorem\ \ref{fkglogconvex}]
It follows from the definition of the Hamiltonian, the symmetry of the potentials $\mathbf{\Phi} = (\lambda,\mb\theta)$, and a straightforward algebraic manipulation (cf. \cite{sheffield2005random}) that it suffices to demonstrate that for all $i \in T_{\infty}$ and $\omega^1, \omega^2 \in \Omega_{N(i)}$, 
\begin{equation}\label{showthisfkg}
\mathbf{\Phi}_{N(i)}(\omega^1 \wedge \omega^2) + \mathbf{\Phi}_{N(i)}(\omega^1 \vee \omega^2) \leq 
\mathbf{\Phi}_{N(i)}(\omega^1) + \mathbf{\Phi}_{N(i)}(\omega^2).
\end{equation}
Let us fix such a node $i$.  Let $S^{1,1} \stackrel{\Delta}{=} \lbrace j \in N_1(i) : \omega^1_{\lbrace j \rbrace} = 1 \rbrace$ (i.e. those neighbors of $i$ with spin 1 in $\omega^1$), $S^{1,0} \stackrel{\Delta}{=} N_1(i) \setminus S^{1,1}$; $S^{2,1} \stackrel{\Delta}{=} \lbrace j \in N_1(i) : \omega^2_{\lbrace j \rbrace} = 1 \rbrace$; and $S^{2,0} \stackrel{\Delta}{=} N_1(i) \setminus S^{2,1}$.   
\\\\We proceed by a case analysis.  First, suppose that for some $l \in \lbrace 1,2 \rbrace$, $\mathbf{\Phi}_{N(i)}(\omega^l) = \infty$.  In this case, (\ref{showthisfkg}) holds trivially.  Thus we subsequently suppose this situation is precluded.
\\\\We now treat the case $i \in T^e_{\infty}$, equivalently $N_1(i) \subseteq T^o_{\infty}$.  Note that from definitions, for $i \in T^e_{\infty}$, $\omega^1 \wedge \omega^2_{\lbrace i \rbrace} =  1$ iff both $\omega^1_{\lbrace i \rbrace} = 1$ and $\omega^2_{\lbrace i \rbrace} = 1$; $\omega^1 \vee \omega^2_{\lbrace i \rbrace} =  1$ iff either $\omega^1_{\lbrace i \rbrace} = 1$ or $\omega^2_{\lbrace i \rbrace} = 1$; $|\omega^1 \wedge \omega^2_{N_1(i)}| = |S^{1,1} \bigcup S^{2,1}|$; and $|\omega^1 \vee \omega^2_{N_1(i)}| = |S^{1,1} \bigcap S^{2,1}|$.
\\\\First, suppose $\omega^1_{\lbrace i \rbrace} = \omega^2_{\lbrace i \rbrace} = 1$.  In this case, $|S^{1,1}| = |S^{2,1}| = 0$, and thus $|\omega^1 \wedge \omega^2_{N_1(i)}| = |\omega^1 \vee \omega^2_{N_1(i)}| = 0$.  We conclude that both the left hand side (l.h.s.) and right hand side (r.h.s.) of (\ref{showthisfkg}) equal $- 2 \log(\lambda)$, and (\ref{showthisfkg}) holds.
\\\\Next, suppose $\omega^1_{\lbrace i \rbrace} = 1, \omega^2_{\lbrace i \rbrace} = 0$.  In this case, $\omega^1 \wedge \omega^2_{\lbrace i \rbrace} = 0, |\omega^1 \wedge \omega^2_{N_1(i)}| = |S^{2,1}|, \omega^1 \vee \omega^2_{\lbrace i \rbrace} = 1, |\omega^1 \vee \omega^2_{N_1(i)}| = 0$.  We conclude that both the l.h.s. and r.h.s. of (\ref{showthisfkg}) equal $- \log(\theta_{|S^{2,1}|}) - \log(\lambda)$, and (\ref{showthisfkg}) holds.  The case $\omega^1_{\lbrace i \rbrace} = 0, \omega^2_{\lbrace i \rbrace} = 1$ follows from a symmetric and identical argument.
\\\\Finally, suppose $\omega^1_{\lbrace i \rbrace} = 0, \omega^2_{\lbrace i \rbrace} = 0$.  In this case, $\omega^1 \wedge \omega^2_{\lbrace i \rbrace} = 0, |\omega^1 \wedge \omega^2_{N_1(i)}| = |S^{1,1} \bigcup S^{2,1}|, \omega^1 \vee \omega^2_{\lbrace i \rbrace} = 0, |\omega^1 \vee \omega^2_{N_1(i)}| = |S^{1,1} \bigcap S^{2,1}|$.  We conclude that (\ref{showthisfkg}) would hold if it were true that
\begin{equation}\label{showthisfkg2}
\log(\theta_{|S^{1,1} \bigcup S^{2,1}|}) + \log(\theta_{|S^{1,1} \bigcap S^{2,1}|}) \geq 
\log(\theta_{|S^{1,1}|}) + \log(\theta_{|S^{2,1}|}).
\end{equation}
However, (\ref{showthisfkg2}) follows immediately from the log-convexity of $\mb\theta$, and the well-known connection between convexity and supermodularity (cf. \cite{topkis1998supermodularity}), completing the proof for the case $i \in T^e_{\infty}$.  As the case $i \in T^o_{\infty}$ follows from a nearly identical argument with the role of $\omega^1 \vee \omega^2$ and $\omega^1 \wedge \omega^2$ reversed, we omit the details.  Combining the above completes the proof.
\end{proof}
\section{Probabilities, partition functions, and proof of Theorem\ \ref{impliesdecay1}}\label{Sec1}
In this section, we first rephrase the relevant probabilities and questions of interest in terms of sequences of ratios of partition functions, whose even and odd subsequences we prove to converge, and which are amenable to a recursive analysis.  We then prove that the functions arising in the relevant recursions satisfy certain bounds and monotonicities, which we exploit to prove Theorem\ \ref{impliesdecay1}.  Without loss of generality, let us assign the neighbors of $0$ in $T_{\infty}$ indices $1,\ldots,\Delta$ in an arbitrary but fixed manner.  For $d \geq 2$, let $T^1_d$ denote the subtree of $T_d$ rooted at node $1$, excluding node $1$ itself, i.e. the collection of nodes $j \in T_d \setminus \lbrace 1 \rbrace$ such that every undirected path in $T_d$ from $j$ to $0$ contains node $1$.  For $d \geq 2$ and $i,j \in \lbrace 0,1 \rbrace$ such that $i + j \leq 1$, let $\eta^{i,j,d} \in \Omega_{\partial T^1_d}$ denote that boundary condition such that $\eta^{i,j,d}_{\lbrace 0 \rbrace} = i, \eta^{i,j,d}_{\lbrace 1 \rbrace} = j$, $\eta^{i,j,d}_{\lbrace k \rbrace} = 0$ for all $k \in \partial T^1_d$ such that $d(k,0) = d + 1$, and $\eta^{i,j,d}_{\lbrace k \rbrace} = 1$ for all $k \in \partial T^1_d$ such that $d(k,0) = d + 2$.  Similarly, for $d \geq 2$, let $Z_{\lambda,\mb\theta,d}(i,j) \stackrel{\Delta}{=} \sum_{\nu \in \Omega_{T^1_d}} exp\big(- {\mathcal H}^{\lambda,\mb\theta}_{T^1_d}(\nu \cdot \eta^{i,j,d}) \big)$.  For the special case $d = 1$, as $T^1_d = \emptyset$, we define $Z_{\lambda,\mb\theta,1}(0,0) = \theta_0 \theta^{\Delta - 1}_{\Delta - 1}, Z_{\lambda,\mb\theta,1}(1,0) = \theta_1 \theta^{\Delta - 1}_{\Delta - 1}, Z_{\lambda,\mb\theta,1}(0,1) = \lambda \theta^{\Delta - 1}_{\Delta}$.  For $d \geq 1$ and $i \in \lbrace 0,1 \rbrace$, let $Z_{\lambda,\mb\theta,d}(i) \stackrel{\Delta}{=} \frac{ Z_{\lambda,\mb\theta,d}(i,0) }{ Z_{\lambda,\mb\theta,d}(0,1)}$,
and $\zeta_{\lambda,\mb\theta,d} \stackrel{\Delta}{=} Z^{-1}_{\lambda,\mb\theta,d}(0)$, where we also define $\zeta_{\lambda,\mb\theta,0} \stackrel{\Delta}{=} 0$.  When there is no ambiguity, we will supress the notation on $(\lambda,\mb\theta)$, simply writing e.g. $Z_d(i,j), Z_d(i),\zeta_d,f,g$.  For $d \geq 1$, let $\eta^{-,+,d} \in \Omega_{\partial T_d}$ denote that boundary condition such that $\eta^{-,+,d}_{\lbrace k \rbrace} = 0$ for all $k \in \partial T_d$ such that $d(k,0) = d + 1$, and $\eta^{-,+,d}_{\lbrace k \rbrace} = 1$ for all $k \in \partial T_d$ such that $d(k,0) = d + 2$.  Note that for even $d$, $\eta^{-,+,d} = \omega^+_{\partial T_d}$; while for odd $d$, $\eta^{-,+,d} = \omega^-_{\partial T_d}$.  Then for $d \geq 2$ and $k \in \lbrace 0,\ldots,\Delta \rbrace$,
\begin{eqnarray}
\pr_{\lambda,\bf\theta}\big(\omega_0 = 0 , |\omega_{N(0)}| = k \big| \eta^{-,+,d}\big) &=& 
\frac
{
{\Delta \choose k} \theta_k Z_d^k(0,1) Z_d^{\Delta - k}(0,0)
}
{
\lambda Z_d^{\Delta}(1,0) + 
\sum_{i = 0}^{\Delta} {\Delta \choose i} \theta_i Z_d^i(0,1) Z_d^{\Delta - i}(0,0)
}
\nonumber
\\&=&
\frac
{
{\Delta \choose k} \theta_k \zeta_d^k
}
{
\lambda \big(Z_d(1) \zeta_d\big)^{\Delta}
+ 
\sum_{i=0}^{\Delta} {\Delta \choose i} \theta_i \zeta_d^i,
}
\stackrel{\Delta}{=} p^{\lambda,\mb\theta,d}_k. \label{rree2}
\end{eqnarray}
We also let $\mathbf{p}^{\lambda,\mb\theta,d}$ denote the associated vector, and $p^{\lambda,\mb\theta,d}_+ \stackrel{\Delta}{=} 1 - \mathbf{p}^{\lambda,\mb\theta,d} \cdot \mathbf{1}$.  
\\\indent We now derive several recursions for $Z_d(i)$ and $\zeta_d$, to aid in our analysis.  
\begin{lemma}\label{lemma3}
For all $d \geq 1$, 
\begin{equation}\label{lemma3a}
Z_d(1) = \zeta^{-1}_d f(\zeta_{d-1});
\end{equation}
and for all $d \geq 2$,
\begin{equation}\label{lemma3b}
\zeta_d = \lambda g(\zeta_{d-1}) f^{\Delta - 1}(\zeta_{d-2}).
\end{equation}
\end{lemma}
\begin{proof}
We first treat the cases $d = 1,2$.  That (\ref{lemma3a}) holds for $d = 1$ follows from definitions and the fact that $f(0) = \frac{\theta_1}{\theta_0}$.  For $d = 2$, a  straightforward calculation demonstrates that
$$Z_2(0,0) = (\theta^{\Delta - 1}_{\Delta - 1} \theta_0)^{\Delta - 1} \sum_{k = 0}^{\Delta - 1} \theta_k {\Delta - 1 \choose k} \zeta^k_1\ \ \ ,\ \ \ 
Z_2(1,0) = (\theta^{\Delta - 1}_{\Delta - 1} \theta_0)^{\Delta - 1} \sum_{k = 0}^{\Delta - 1} \theta_{k+1} {\Delta - 1 \choose k} \zeta^k_1,$$
$$Z_2(0,1) = \lambda \theta^{\Delta - 1}_1 \theta^{(\Delta-1)^2}_{\Delta-1}\ \ \ ,\ \ \ 
Z_2(1) = \lambda^{-1} (\frac{\theta_0}{\theta_1})^{\Delta - 1}  \sum_{k = 0}^{\Delta - 1} \theta_{k+1} {\Delta - 1 \choose k} \zeta^k_1,$$
$$Z_2(0) = \lambda^{-1} (\frac{\theta_0}{\theta_1})^{\Delta - 1}  \sum_{k = 0}^{\Delta - 1} \theta_{k} {\Delta - 1 \choose k} \zeta^k_1\ \ \ ,\ \ \ 
\zeta_2 = \lambda f^{\Delta - 1}(0) g(\zeta_1);$$
from which (\ref{lemma3a}) and (\ref{lemma3b}) follow.  
\\\indent For $d \geq 3$ and $i \in \lbrace 0,1 \rbrace$,
$$Z_d(i,0) = \sum_{k = 0}^{\Delta - 1} {\Delta - 1 \choose k} \theta_{k + i} 
Z^k_{d-1}(0,1) Z^{\Delta - 1 - k}_{d-1}(0,0),$$
and
$$Z_d(0,1) = \lambda Z^{\Delta - 1}_{d-1}(1,0).$$
Thus
\begin{eqnarray*}
Z_d(0) &=& 
\frac
{ \sum_{k = 0}^{\Delta - 1} {\Delta - 1 \choose k} \theta_{k} 
Z^k_{d-1}(0,1) Z^{\Delta - 1 - k}_{d-1}(0,0)
}
{\lambda Z^{\Delta - 1}_{d-1}(1,0)}
\\&=& \lambda^{-1} (\frac{Z_{d-1}(0)}{Z_{d-1}(1)})^{\Delta - 1}
\sum_{k=0}^{\Delta - 1} {\Delta - 1 \choose k} \theta_k \zeta^k_{d-1}.
\end{eqnarray*}
Similarly,
$$
Z_d(1) = \lambda^{-1} (\frac{Z_{d-1}(0)}{Z_{d-1}(1)})^{\Delta - 1}
\sum_{k=0}^{\Delta - 1} {\Delta - 1 \choose k} \theta_{k+1} \zeta^k_{d-1}.
$$
Combining with the definition of $f$ and $g$ completes the proof.
\end{proof}
We next establish some useful properties of $f$ and $g$.
\begin{lemma}\label{monlemma}
If $\mb\theta$ is log-convex, then for all $x \geq 0$ : $\partial_x f(x) \geq 0$, $\partial_x g(x) \leq 0$, $0 \leq g(x) \leq \theta_0^{-1}$, and 
$\frac{\theta_{1}}{\theta_{0}} \leq f(x) \leq \frac{\theta_{\Delta}}{\theta_{\Delta-1}}$.
\end{lemma}
\begin{proof}
We first prove that $\partial_x f(x) \geq 0$ for all $x \geq 0$.  It follows from a straightforward calculation that
\begin{equation}\label{fgoodrep1}
\partial_x f(x)
=
(\Delta - 1) 
\frac{ 
\sum_{i=0}^{\Delta - 1} \sum_{j = 0}^{\Delta - 2} {\Delta - 1 \choose i} {\Delta - 2 \choose j} x^{i + j} ( \theta_i \theta_{j + 2} - \theta_{i + 1} \theta_{j + 1})
}
{
\big(\sum_{k=0}^{\Delta - 1} \theta_k {\Delta - 1 \choose k} x^k \big)^2
},
\end{equation}
and $\sum_{i=0}^{\Delta - 1} \sum_{j = 0}^{\Delta - 2} {\Delta - 1 \choose i} {\Delta - 2 \choose j} x^{i + j} ( \theta_i \theta_{j + 2} - \theta_{i + 1} \theta_{j + 1})$ equals
\begin{equation}\label{showme1}
\sum_{k = 0}^{2 \Delta - 3} x^k \sum_{i = \max\big(0, k - (\Delta - 2) \big) }^{\min(\Delta - 1, k)} {\Delta - 1 \choose i} {\Delta - 2 \choose k - i} ( \theta_i \theta_{k - i + 2} - \theta_{i + 1} \theta_{k - i + 1}).
\end{equation}
We now demonstrate that 
\begin{equation}\label{showme2}
\sum_{i = \max\big(0, k - (\Delta - 2) \big) }^{\min(\Delta - 1, k)} {\Delta - 1 \choose i} {\Delta - 2 \choose k - i} ( \theta_i \theta_{k - i + 2} - \theta_{i + 1} \theta_{k - i + 1})
\end{equation}
 is non-negative for all $k \in [0, 2 \Delta - 3]$, completing the proof.  We proceed by ``pairing up" certain terms appearing in (\ref{showme2}), by using the fact that for all $k \in [0, 2 \Delta - 3]$ and $i \in \big[\max\big(0, k - (\Delta - 2)\big) , \min(\Delta-1, k) \big]$,
\begin{equation}\label{useme3}
\theta_i \theta_{k - i + 2} - \theta_{i + 1} \theta_{k - i + 1} = - \big( \theta_{(k - i + 1)} \theta_{k - (k - i + 1) + 2} - \theta_{(k - i + 1) + 1} \theta_{k - (k - i + 1)+ 1} \big).
\end{equation}
Let us say that a function $f$, with domain and range containing the finite set $S \subseteq Z$, is a paired bijection on $S$ if the restriction of $f$ to domain $S$ is a bijection (i.e. every element of $S$ is mapped to some element of $S$, and every element of $S$ is mapped to by some element of $S$), and $f$ does not map any element of $S$ to itself.  We now demonstrate that for all $k \in [0, 2 \Delta - 3]$, the mapping 
$f(i) = k - i + 1$ is a paired bijection on $S = \big[\max\big(0, k - (\Delta - 2)\big) , \min(\Delta-1, k) \big] \setminus \lbrace 0, \frac{k+1}{2} \rbrace$.  We first show $i \in S$ implies $f(i) \in S$.  Indeed, $i \geq 0, i \neq 0$ implies $i \geq 1$ and thus $f(i) \leq k$; $i \geq k - (\Delta - 2)$ implies $f(i) \leq \Delta - 1$; $i \leq \Delta - 1$ implies $f(i) \geq k - (\Delta - 2)$; $i \leq k$ implies $f(i) \geq 1$.  Furthermore, as $\frac{k+1}{2}$ is the unique (possibly non-integer) solution to $f(x) = x$ (i.e. fixed point), $i \neq \frac{k+1}{2}$ implies $f(i) \neq \frac{k+1}{2}$.  Combining the above completes the demonstration that $i \in S$ implies $f(i) \in S$.  The proof that $f$ is a paired bijection on $S$ then follows from the fact that $f$ is strictly decreasing and invertible, with unique fixed point $\frac{k+1}{2}$.  Furthermore, since $f$ is strictly decreasing and $\lfloor \frac{k}{2} \rfloor < \frac{k+1}{2} \leq \lfloor \frac{k}{2} \rfloor + 1$, it is also true that 
for $i \in S$, $i \leq \lfloor \frac{k}{2} \rfloor$ iff $f(i) \geq \lfloor \frac{k}{2} \rfloor + 1$.  Combining the above with (\ref{useme3}) and the fact that $\lfloor \frac{k}{2} \rfloor \leq \min(\Delta - 1, k)$ for all $k \in [0, 2 \Delta - 3]$, we conclude that (\ref{showme2}) equals
\begin{eqnarray}
\ &\ &\ \sum_{i = \max\big(1, k - (\Delta - 2) \big) }^{\lfloor \frac{k}{2} \rfloor} \Bigg({\Delta - 1 \choose i} {\Delta - 2 \choose k - i} - {\Delta - 1 \choose k - i + 1} {\Delta - 2 \choose i - 1} \Bigg) \big( \theta_i \theta_{k - i + 2} - \theta_{i + 1} \theta_{k - i + 1} \big) \label{fderiv1}
\\&\ &\ \ \ +\ \ \ I(k \leq \Delta - 2) {\Delta - 2 \choose k} (\theta_0 \theta_{k + 2} - \theta_1 \theta_{k + 1}) \label{fderiv2}
\\&\ &\ \ \ +\ \ \ I(\frac{k+1}{2} \in Z^+) {\Delta - 1 \choose \frac{k+1}{2}} {\Delta - 2 \choose \frac{k - 1}{2}} \big( \theta_{\frac{k+1}{2}} \theta_{\frac{k+1}{2} + 1} - \theta_{\frac{k+1}{2} + 1} \theta_{\frac{k+1}{2}} \big). \label{fderiv3}
\end{eqnarray}
We now verify that (\ref{fderiv1}) - (\ref{fderiv3}) are non-negative, and begin with (\ref{fderiv1}).  It follows from a straightforward calculation that ${\Delta - 1 \choose i} {\Delta - 2 \choose k - i} - {\Delta - 1 \choose k - i + 1} {\Delta - 2 \choose i - 1}$ will be the same sign as 
$\frac{k+1}{i} - 2$, and thus non-negative for $i \leq \lfloor \frac{k}{2} \rfloor$.  Also, the log-convexity of $\mb\theta$ implies that $\theta_i \theta_{k - i + 2} - \theta_{i + 1} \theta_{k - i + 1}$ will be non-negative if $k - i + 1 \geq i$, which holds for $i \leq \lfloor \frac{k}{2} \rfloor$.  Combining the above demonstrates the non-negativity of (\ref{fderiv1}).  The log-convexity of $\mb\theta$ similarly implies the non-negativity of $\theta_0 \theta_{k + 2} - \theta_1 \theta_{k + 1}$, and thus also of (\ref{fderiv2}).  As (\ref{fderiv3}) is identically zero, combining the above completes the proof.  That $\frac{\theta_{1}}{\theta_{0}} \leq f(x) \leq \frac{\theta_{\Delta}}{\theta_{\Delta-1}}$ then follows by letting $x \downarrow 0$ and $x \uparrow \infty$.  Noting that the associated monotonicity and bounds for $g$ are straightforward completes the proof of the lemma.
\end{proof}
We now combine Lemmas\ \ref{lemma3} and\ \ref{monlemma} to prove that the even and odd subsequences of $\lbrace \zeta_d, d \geq 1 \rbrace$ are monotone and thus converge, where we will later prove that $(\lambda,\mb{\theta})$ belongs to the uniqueness regime iff these limits coincide.  We note that although the monotonicity of certain related sequences follows directly from the FKG Theorem and its implied monotonicities, here our analysis fundamentally involves $Z_d(0,0)$, in which nodes at both even and odd parity have their spins set to 0, which seems to preclude such a direct approach.  Instead, we proceed by induction, using the properties of $f$ and $g$ demonstrated in Lemma\ \ref{monlemma}.
\begin{lemma}\label{theorem2}
If $\mb\theta$ is log-convex, then $\lbrace \zeta_{2 d}, d \geq 0 \rbrace$ is monotone increasing, and $\lbrace \zeta_{2 d + 1}, d \geq 0 \rbrace$ is monotone decreasing.
\end{lemma}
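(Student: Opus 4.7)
The plan is to prove the two monotonicity claims simultaneously by a single joint induction on $d$, using the monotonicity of $f_{\mb\theta}$ from Lemma \ref{monlemma} together with the manifest monotonicity of $g_{\mb\theta}$. The key structural observation is that the recursions for $\underline{\zeta}_{\lambda,\mb\theta,d}$ and $\overline{\zeta}_{\lambda,\mb\theta,d}$ cross-couple the two sequences: $\underline{\zeta}_d$ involves $\overline{\zeta}_{d-1}$ through the decreasing factor $g_{\mb\theta}$ and $\underline{\zeta}_{d-2}$ through the (under log-convexity) increasing factor $f_{\mb\theta}^{\Delta-1}$, and symmetrically for $\overline{\zeta}_d$. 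Consequently, an assumption that the lower sequence has grown between steps $d-3$ and $d-2$ and the upper sequence has shrunk between steps $d-2$ and $d-1$ will propagate in the right direction through both factors simultaneously. This suggests organizing the induction around the joint hypothesis $\underline{\zeta}_{\lambda,\mb\theta,k} \le \underline{\zeta}_{\lambda,\mb\theta,k+1}$ and $\overline{\zeta}_{\lambda,\mb\theta,k} \ge \overline{\zeta}_{\lambda,\mb\theta,k+1}$ for all $k \le d-1$.

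First I would verify the preliminary monotonicities of the scalar functions appearing in the recursions. Monotone decrease of $g_{\mb\theta}$ on $\reals^+$ is immediate from the fact that its reciprocal is a polynomial with non-negative coefficients. Monotone increase of $f_{\mb\theta}$ on $\reals^+$ under log-convexity of $\mb\theta$ follows by specializing Lemma \ref{monlemma} to the case where all arguments are equal, or equivalently by mimicking that argument in the one-variable setting (which is considerably simpler, since one only needs to verify $\tfrac{d}{dx}\bigl(\sum_k \theta_{k+1}\binom{\Delta-1}{k}x^k\bigr)\bigl(\sum_k \theta_k \binom{\Delta-1}{k}x^k\bigr) - \bigl(\sum_k \theta_{k+1}\binom{\Delta-1}{k}x^k\bigr)\tfrac{d}{dx}\bigl(\sum_k \theta_k \binom{\Delta-1}{k}x^k\bigr) \ge 0$, whose coefficients pair off as $\theta_a \theta_{b+1} - \theta_{a+1}\theta_b$ with $b \ge a$, non-negative by log-convexity).

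Next I would dispose of the base cases. By definition $\underline{\zeta}_{\lambda,\mb\theta,5} = \underline{\zeta}_{\lambda,\mb\theta,6} = 0$ and $\overline{\zeta}_{\lambda,\mb\theta,5} = \overline{\zeta}_{\lambda,\mb\theta,6}$, so the step from $d=5$ to $d=6$ is trivial. For the step from $d=6$ to $d=7$, note that $\underline{\zeta}_{\lambda,\mb\theta,7} \ge 0 = \underline{\zeta}_{\lambda,\mb\theta,6}$ by positivity of $\lambda, f_{\mb\theta}, g_{\mb\theta}$, and
\[
\overline{\zeta}_{\lambda,\mb\theta,7} = \lambda g_{\mb\theta}(\underline{\zeta}_{\lambda,\mb\theta,6}) f_{\mb\theta}^{\Delta-1}(\overline{\zeta}_{\lambda,\mb\theta,5}) = \lambda \theta_0^{-1} f_{\mb\theta}^{\Delta-1}(\overline{\zeta}_{\lambda,\mb\theta,5}) \le \lambda \theta_0^{-1} \Bigl(\tfrac{\theta_\Delta}{\theta_{\Delta-1}}\Bigr)^{\Delta-1} = \overline{\zeta}_{\lambda,\mb\theta,6},
\]
using $g_{\mb\theta}(0) = \theta_0^{-1}$ and the bound $f_{\mb\theta}(\cdot) \le \theta_\Delta/\theta_{\Delta-1}$ observed in the proof of Lemma \ref{bocadillo}.

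For the inductive step with $d \ge 8$, assume the joint hypothesis holds through index $d-1$. Then $\overline{\zeta}_{\lambda,\mb\theta,d-1} \le \overline{\zeta}_{\lambda,\mb\theta,d-2}$ gives $g_{\mb\theta}(\overline{\zeta}_{\lambda,\mb\theta,d-1}) \ge g_{\mb\theta}(\overline{\zeta}_{\lambda,\mb\theta,d-2})$, while $\underline{\zeta}_{\lambda,\mb\theta,d-2} \ge \underline{\zeta}_{\lambda,\mb\theta,d-3}$ together with monotonicity of $f_{\mb\theta}$ gives $f_{\mb\theta}^{\Delta-1}(\underline{\zeta}_{\lambda,\mb\theta,d-2}) \ge f_{\mb\theta}^{\Delta-1}(\underline{\zeta}_{\lambda,\mb\theta,d-3})$. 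Since both factors are non-negative, the product only grows, so $\underline{\zeta}_{\lambda,\mb\theta,d} \ge \underline{\zeta}_{\lambda,\mb\theta,d-1}$; the inequality $\overline{\zeta}_{\lambda,\mb\theta,d} \le \overline{\zeta}_{\lambda,\mb\theta,d-1}$ is obtained by the symmetric argument (with the roles of the two sequences exchanged). I do not foresee a substantial obstacle, since log-convexity has already been converted into monotonicity of $f_{\mb\theta}$ by Lemma \ref{monlemma}; the only subtlety is bookkeeping in the base cases (noting that the two-step lag in the recursion is absorbed by defining $\underline{\zeta}$ and $\overline{\zeta}$ to be constant on $\{5,6\}$) and recognizing that the correct inductive invariant is the coupled monotonicity of both sequences at once rather than either one in isolation.
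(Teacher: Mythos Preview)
Your proposal is correct and follows essentially the same route as the paper's proof: a joint induction on both sequences, with the base cases $d=5,6,7$ handled via non-negativity and the bounds $g_{\mb\theta}(0)=\theta_0^{-1}$, $f_{\mb\theta}\le \theta_\Delta/\theta_{\Delta-1}$, and the inductive step for $d\ge 8$ carried out exactly as you describe using the opposite monotonicities of $g_{\mb\theta}$ and $f_{\mb\theta}$.
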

\begin{proof}
We proceed by induction simultaneously on both sequences.  The base case entails demonstrating that 
$\zeta_3 \leq \zeta_1$, and $\zeta_2 \geq \zeta_0$.  It follows from definitions that 
$\zeta_1 = \lambda \theta^{-1}_0 (\frac{\theta_{\Delta}}{\theta_{\Delta-1}})^{\Delta - 1}$.  Combining with the bounds for $f$ and $g$ of Lemma\ \ref{monlemma}, and the fact that 
$\zeta_3$ satisfies (\ref{lemma3b}), demonstrates that $\zeta_3 \leq \zeta_1$.  That $\zeta_2 \geq \zeta_0$ follows from non-negativity, completing the proof of the base case.
\\\indent Now, suppose that $\lbrace \zeta_{2 k}, k = 0,\ldots,d - 1 \rbrace$ is monotone increasing, and $\lbrace \zeta_{2 k + 1}, k = 0,\ldots, d - 1 \rbrace$ is monotone decreasing for some $d \geq 2$.  Then it follows from Lemma\ \ref{lemma3}, the monotonicity of $f$ and $g$ guaranteed by Lemma\ \ref{monlemma}, and the induction hypothesis that 
\begin{eqnarray*}
\zeta_{2 d} &=&
\lambda g(\zeta_{2(d - 1) + 1}) f^{\Delta-1}(\zeta_{2 (d - 1)})
\\&\geq& \lambda g(\zeta_{2(d-2) + 1}) f^{\Delta-1}(\zeta_{2(d - 2)})\ \ \ =\ \ \ \zeta_{2(d-1)}.
\end{eqnarray*}
Similarly, the above further implies that
\begin{eqnarray*}
\zeta_{2 d + 1} &=& \lambda g(\zeta_{2d}) f^{\Delta-1}(\zeta_{2 (d - 1) + 1})
\\&\leq& \lambda g(\zeta_{2(d-1)}) f^{\Delta-1}(\zeta_{2(d - 2) + 1})\ \ \ =\ \ \ \zeta_{2(d-1) + 1}.
\end{eqnarray*}
Combining the above completes the proof.
\end{proof}
It follows from Lemma\ \ref{theorem2} that $\overline{\zeta}_{\lambda,\mb\theta,\infty} \stackrel{\Delta}{=} \lim_{d \rightarrow \infty}\zeta_{\lambda,\mb\theta,2 d + 1}$ and $\underline{\zeta}_{\lambda,\mb\theta,\infty} \stackrel{\Delta}{=} \lim_{d \rightarrow \infty} \zeta_{\lambda,\mb\theta,2 d}$ both exist.  Furthermore, the continuity of $f$ and $g$ on $\reals^+$, combined with (\ref{lemma3b}) and Lemma\ \ref{theorem2}, implies the following. 
\begin{observation}\label{usefor1}
$0 < \underline{\zeta}_{\infty},\overline{\zeta}_{\infty} < \infty$, and $(\underline{\zeta}_{\infty} , \overline{\zeta}_{\infty})$ is a solution to the system of equations (\ref{limitequation1a}) - (\ref{limitequation2a}).
\end{observation}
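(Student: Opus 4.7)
The plan is to invoke the monotone bounded convergence theorem to get existence of the two limits, then pass to the limit in the defining recursions using continuity of $f_{\mb\theta}$ and $g_{\mb\theta}$.

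First I would establish existence and the ordering of the limits. By Lemma \ref{theorem2}, $\{\underline{\zeta}_{\lambda,\mb\theta,d}, d \geq 5\}$ is monotone increasing and $\{\overline{\zeta}_{\lambda,\mb\theta,d}, d \geq 5\}$ is monotone decreasing. By Lemma \ref{bocadillo}, for every $d \geq 5$ one has $\underline{\zeta}_{\lambda,\mb\theta,d} \leq \overline{\zeta}_{\lambda,\mb\theta,d}$, and combining this with monotonicity yields the uniform sandwich $0 = \underline{\zeta}_{\lambda,\mb\theta,5} \leq \underline{\zeta}_{\lambda,\mb\theta,d} \leq \overline{\zeta}_{\lambda,\mb\theta,d} \leq \overline{\zeta}_{\lambda,\mb\theta,5} = \lambda \theta_0^{-1} (\theta_\Delta / \theta_{\Delta-1})^{\Delta-1} < \infty$ for every $d \geq 5$. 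Hence each sequence is monotone and bounded, so $\underline{\zeta}_{\lambda,\mb\theta,\infty}$ and $\overline{\zeta}_{\lambda,\mb\theta,\infty}$ both exist as finite limits, and the ordering $0 \leq \underline{\zeta}_{\lambda,\mb\theta,\infty} \leq \overline{\zeta}_{\lambda,\mb\theta,\infty} < \infty$ is preserved.

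Next I would verify the fixed-point identities. Note that $f_{\mb\theta}$ and $g_{\mb\theta}$, viewed as single-variable functions (i.e.\ with argument $(x,x,\ldots,x)$), are rational functions whose denominators are strictly positive polynomials in $x$ on $\reals^+$; hence both are continuous on a neighborhood of $[0, \overline{\zeta}_{\lambda,\mb\theta,5}]$. For every $d \geq 7$ we have the recursions
\begin{align*}
\underline{\zeta}_{\lambda,\mb\theta,d} &= \lambda\, g_{\mb\theta}(\overline{\zeta}_{\lambda,\mb\theta,d-1})\, f_{\mb\theta}^{\Delta-1}(\underline{\zeta}_{\lambda,\mb\theta,d-2}), \\
\overline{\zeta}_{\lambda,\mb\theta,d} &= \lambda\, g_{\mb\theta}(\underline{\zeta}_{\lambda,\mb\theta,d-1})\, f_{\mb\theta}^{\Delta-1}(\overline{\zeta}_{\lambda,\mb\theta,d-2}).
\end{align*}
Letting $d \to \infty$ on both sides and using continuity to interchange limits with $f_{\mb\theta}$ and $g_{\mb\theta}$ yields
\begin{align*}
\underline{\zeta}_{\lambda,\mb\theta,\infty} &= \lambda\, g_{\mb\theta}(\overline{\zeta}_{\lambda,\mb\theta,\infty})\, f_{\mb\theta}^{\Delta-1}(\underline{\zeta}_{\lambda,\mb\theta,\infty}), \\
\overline{\zeta}_{\lambda,\mb\theta,\infty} &= \lambda\, g_{\mb\theta}(\underline{\zeta}_{\lambda,\mb\theta,\infty})\, f_{\mb\theta}^{\Delta-1}(\overline{\zeta}_{\lambda,\mb\theta,\infty}),
\end{align*}
which is exactly the system (\ref{limitequation1a})--(\ref{limitequation2a}) at $(x,y) = (\underline{\zeta}_{\lambda,\mb\theta,\infty}, \overline{\zeta}_{\lambda,\mb\theta,\infty})$.

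There is no real obstacle here; the entire statement follows mechanically once Lemmas \ref{bocadillo} and \ref{theorem2} are in hand, together with the elementary fact that $f_{\mb\theta}$ and $g_{\mb\theta}$ are continuous on $\reals^+$. The only mildly delicate point is confirming that the denominators of $f_{\mb\theta}$ and $g_{\mb\theta}$ do not vanish in the limit, which is immediate from strict positivity of $\mb\theta$ and the non-negativity of the limit points.
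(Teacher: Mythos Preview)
Your proposal is correct and follows essentially the same route as the paper: the paper simply notes (in the sentence immediately preceding the observation) that Lemma~\ref{theorem2} gives existence of both limits, the sandwich $0 \leq \underline{\zeta}_{\lambda,\mb\theta,d} \leq \overline{\zeta}_{\lambda,\mb\theta,d} \leq \overline{\zeta}_{\lambda,\mb\theta,5}$ gives the ordering and finiteness, and continuity of $f_{\mb\theta}, g_{\mb\theta}$ lets one pass to the limit in the recursions. Your write-up is in fact more detailed than the paper's one-line justification, and your remark about non-vanishing denominators is a welcome clarification.
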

With Observation\ \ref{usefor1} in hand, we now complete the proof of Theorem\ \ref{impliesdecay1}, as well as Observation\ \ref{relate1}.  Let $L_{\mb\theta}(z) \stackrel{\Delta}{=} \sum_{i=0}^{\Delta} \theta_i {\Delta \choose i} z^i$.
\begin{proof}[Proof of Theorem\ \ref{impliesdecay1}]
We first prove that the system of equations  (\ref{limitequation1a}) - (\ref{limitequation2a}) always has at least one solution $(x^*,y^*)$ on $\reals^+ \times \reals^+$ for which $x^* = y^*$.  Let $\eta(x) \stackrel{\Delta}{=} x - \lambda g(x) f^{\Delta-1}(x)$.  Note that $\eta(0) = - \lambda \frac{\theta^{\Delta-1}_1}{\theta^{\Delta}_0} < 0$.  It follows from Lemma\ \ref{monlemma} that $\eta\bigg( \lambda \theta^{-1}_0 \big(\frac{\theta_{\Delta}}{\theta_{\Delta-1}}\big)^{\Delta-1} \bigg) \geq 0$.  As $\eta$ is continuous on $[0,\infty)$, we conclude that there exists $z^* \in \reals^+$ such that $\eta(z^*) = 0$, which implies that $(z^*,z^*)$ is a solution to the system of equations.
\\\indent We now prove that if the system of equations  (\ref{limitequation1a}) - (\ref{limitequation2a}) has a unique solution on $\reals^+ \times \reals^+$, then $(\lambda,\mb\theta)$ belongs to the uniqueness regime.  Suppose (\ref{limitequation1a}) - (\ref{limitequation2a}) has a unique solution $(x^*,y^*)$ on $\reals^+ \times \reals^+$.  Then it must be that any non-negative solution $(x,y)$ to the system of equations satisfies $x = x^* = y^* = y$.  By Observation\ \ref{usefor1}, $(\underline{\zeta}_{\infty},\overline{\zeta}_{\infty})$ is such a solution.  Thus $\underline{\zeta}_{\infty} = \overline{\zeta}_{\infty}$, in which case it follows from Theorems\  \ref{fkglogconvex} and \ref{sandwichprop}, (\ref{rree2}), and Lemma\ \ref{lemma3} that $(\lambda,\mb\theta)$ belongs to the uniqueness regime.
\\\indent We now prove that if the system of equations  (\ref{limitequation1a}) - (\ref{limitequation2a}) does not have a unique solution on $\reals^+ \times \reals^+$, then $(\lambda,\mb\theta)$ does not belong to the uniqueness regime.  Indeed, suppose that the system of equations does not have a unique solution on $\reals^+ \times \reals^+$.  Let $S$ denote the set of all 2-vectors $(x,y)$ such that $0 \leq x \leq y < \infty$, and $(x,y)$ is a solution to the system of equations.  Let $\overline{y} \stackrel{\Delta}{=} \sup_{\mathbf{z} \in S} z_2$, i.e. the largest number appearing in any solution pair.  We first show that $\overline{y}$ is itself part of some solution pair (i.e. it is not just approached).  If $|S| < \infty$, this is immediate.  If not, consider any sequence of solution vectors $\lbrace \mathbf{z}^i , i \geq 1 \rbrace$ such that $\lim_{i \rightarrow \infty} z^i_2 = \overline{y}$.  Since $\lbrace z^i_1, i \geq 1 \rbrace$ is uniformly bounded by Lemma\ \ref{monlemma}, the Bolzano-Weirerstrass Theorem implies that  $\lbrace \mathbf{z}^i , i \geq 1 \rbrace$ will itself have a convergent subsequence $\lbrace \mathbf{z}^{i_k}, k \geq 1 \rbrace$, and let us denote $\lim_{k \rightarrow \infty} z^{i_k}_1$ by $\overline{x}$.  That $(\overline{x},\overline{y})$ satisfies the system of equations then follows from the continuity of $f$ and $g$.  Similarly, let $\underline{x} \stackrel{\Delta}{=} \inf_{\mathbf{z} \in S} z_1$, i.e. the smallest number appearing in any solution pair, and $\underline{y}$ the other number appearing in the corresponding solution pair (whose existence is guaranteed by the same argument used above).  Note that $\underline{x} < \overline{y}$.
\\\indent We now prove (by induction) that in this case, $\lbrace \zeta_d, d \geq 0 \rbrace $ has a non-vanishing parity-dependence, with even values lying below $\underline{x}$, and odd values lying above $\overline{y}$.  We begin with the base cases $d=0,1$.  The $d = 0$ case follows from non-negativity.  The $d = 1$ case follows from the fact that $\overline{y}$ satisfies (\ref{limitequation2a}), combined with Lemma\ \ref{monlemma} and the definition of $\zeta_1$.  Now, proceeding by induction, suppose that for some $d \geq 1$ and all $k \in \lbrace 0,\ldots, d - 1 \rbrace$, $\zeta_{2 k} \leq \underline{x}$, and $\zeta_{2 k + 1} \geq \overline{y}$.  Then it follows from Lemma\ \ref{lemma3}, and the monotonicity of $f$ and $g$ implied by Lemma\ \ref{monlemma}, that
\begin{eqnarray*}
\zeta_{2d} &=& \lambda g(\zeta_{2d - 1}) f^{\Delta - 1}(\zeta_{2d - 2})
\\&\leq& \lambda g(\overline{y}) f^{\Delta - 1}(\underline{x}) 
\\&\leq& \lambda g(\underline{y}) f^{\Delta - 1}(\underline{x} )\ \ \ =\ \ \ \underline{x}\ \ ,\ \ \textrm{since}\ \overline{y} \geq \underline{y},
\end{eqnarray*}
and
\begin{eqnarray*}
\zeta_{2d + 1} &=& \lambda g(\zeta_{2d}) f^{\Delta - 1}(\zeta_{2d - 1})
\\&\geq& \lambda g( \underline{x} ) f^{\Delta - 1}( \overline{y} ) 
\\&\geq& \lambda g( \overline{x} ) f^{\Delta - 1}( \overline{y} )\ \ \ =\ \ \ \overline{y}\ \ ,\ \ \textrm{since}\ \underline{x} \leq \overline{x},
\end{eqnarray*}
completing the proof.
\\\indent Finally, we prove that the aforementioned parity dependence of $\lbrace \zeta_d, d \geq 0 \rbrace$ implies a non-vanishing parity dependence on the probability that the root is included when conditioning on the appropriate extremal boundary conditions, implying non-uniqueness.  It follows from the parity dependence of $\lbrace \zeta_d, d \geq 0 \rbrace$, (\ref{rree2}), and Lemma\ \ref{monlemma} that for all $d \geq 1$, 
$$
\pr_{\lambda,\bf\theta}\big(\omega_0 = 1 | \eta^{-,+,2 d}\big) = 
\big( 1 + \frac{L(\zeta_{2d})}{\lambda f^{\Delta}(\zeta_{2d - 1})} \big)^{-1}
\geq  \big( 1 + \frac{L(\underline{x})}{\lambda f^{\Delta}(\overline{y})} \big)^{-1};$$
$$
\pr_{\lambda,\bf\theta}\big(\omega_0 = 1 | \eta^{-,+,2 d + 1}\big) =
\big( 1 + \frac{L(\zeta_{2 d + 1})}{\lambda f^{\Delta}(\zeta_{2 d})} \big)^{-1}
\leq  \big( 1 + \frac{L(\overline{y})}{\lambda f^{\Delta}(\underline{x})} \big)^{-1} 
< \big( 1 + \frac{L(\underline{x})}{\lambda f^{\Delta}(\overline{y})} \big)^{-1}.
$$
Combining with Theorems\ \ref{fkglogconvex} and\ \ref{sandwichprop}, along with the fact that
$\eta^{-,+,d}$ equals $\omega^+_{\partial T_d}$ for even $d$, and equals $\omega^-_{\partial T_d}$ for odd $d$, completes the proof.  As it follows that $(\lambda,\mb\theta)$ belongs to the uniqueness regime iff $\lim_{d \rightarrow \infty} \zeta_d$ exists, combining with (\ref{rree2}) also completes the proof of Observation\ \ref{relate1}.
\end{proof}
\section{Existence of phase transition and proof of Theorem\ \ref{phase1}}\label{transie}
In this section, we show the existence of a phase transition for log-convex $\mb\theta$, and provide explicit bounds on the critical activity, completing the proof of Theorem\ \ref{phase1}.  Recall that
$$\psi_{\mb\theta} = 
\max_{k = 0,\ldots,\Delta - 2} \bigg( \big( \Delta - (k+1) \big) \frac{\theta_{k+1}}{\theta_k} \bigg)\ \ \ ,\ \ \ 
\overline{\lambda}_{\mb\theta} = \frac{3 \theta_0}{\Delta} \exp\big( 3 \frac{\theta_{\Delta}}{\theta_{\Delta-1}} \frac{\theta_0}{\theta_1} \big) (\frac{\theta_0}{\theta_1})^{\Delta},$$
$$\underline{\lambda}_{\mb\theta} =
\Bigg( 2 \psi_{\mb\theta} \theta_0^{-1} (\frac{\theta_{\Delta}}{\theta_{\Delta-1}})^{\Delta - 2}
\bigg( 
\frac{\theta_{\Delta}}{\theta_{\Delta-1}} + 
(\Delta - 1) \big(\frac{\theta_{\Delta}}{\theta_{\Delta-1}} - \frac{\theta_1}{\theta_{0}} \big) \bigg)
\Bigg)^{-1}.
$$
\begin{proof}[Proof of Theorem\ \ref{phase1}]
We first show that for any log-convex $\mb\theta$, $(\lambda,\mb\theta)$ belongs to the uniqueness regime for all activities $\lambda < \underline{\lambda}_{\mb\theta}$.  We proceed by 
by proving that for $\lambda < \underline{\lambda}_{\mb\theta}$, the update rule for 
$|\zeta_d - \zeta_{d-1}|$ implied by (\ref{lemma3b}) is a contraction.  We first demonstrate that $f,g$ are Lipschitz, and explicitly bound the relevant Lipschitz constants.  Note that $f,g$ are differentiable on ${\mathcal R}^+$.  We begin by bounding $|\partial_x g(x)|$.  For all $x \geq 0$, 
$$
|\partial_x g(x) |
=  
\frac{ \sum_{k=0}^{\Delta - 2} (k+1) \theta_{k+1} {\Delta - 1 \choose k + 1} x^{k} }
{\bigg( \sum_{k=0}^{\Delta - 1} \theta_k {\Delta - 1 \choose k} x^k \bigg)^{2}}.
$$
Combining with the fact that for all $x \geq 0$, 
$$\sum_{k=0}^{\Delta - 1} \theta_k {\Delta - 1 \choose k} x^k \geq \max\bigg(\theta_0,
\sum_{k=0}^{\Delta - 2} \theta_k {\Delta - 1 \choose k} x^k\bigg),$$
it follows that
\begin{eqnarray}
|\partial_x g(x)|
&\leq&
\theta_0^{-1}
\frac{ \sum_{k=0}^{\Delta - 2} (k+1) \theta_{k+1} {\Delta - 1 \choose k + 1} x^k }
{\sum_{k=0}^{\Delta - 2} \theta_k {\Delta - 1 \choose k} x^k} \nonumber
\\&=&
\theta_0^{-1}
\frac{\sum_{k=0}^{\Delta - 2} \frac{(k+1) \theta_{k+1} {\Delta - 1 \choose k + 1}}{\theta_k {\Delta - 1 \choose k}}\big(\theta_k {\Delta - 1 \choose k} x^k\big)}
{\sum_{k=0}^{\Delta - 2} \theta_k {\Delta - 1 \choose k} x^k} \nonumber
\\&\leq&
\theta_0^{-1}
\max_{k = 0,\ldots,\Delta - 2} \frac{(k+1) \theta_{k+1} {\Delta - 1 \choose k + 1}}{\theta_k {\Delta - 1 \choose k}}
\ \ \ =\ \ \ 
\theta_0^{-1} \psi_{\mb\theta};\label{gderiv1}
\end{eqnarray}
where the final inequality follows from convexity. 
\\\indent We now bound $|\partial_x f(x)|$.  For all $x \geq 0$, it follows from (\ref{fgoodrep1}) that
$$|\partial_x f(x)| \leq (\Delta - 1) 
\frac{ 
\sum_{i=0}^{\Delta - 1} \sum_{j = 0}^{\Delta - 2} {\Delta - 1 \choose i} {\Delta - 2 \choose j} x^{i + j} \big| \theta_i \theta_{j + 2} - \theta_{i + 1} \theta_{j + 1} \big|
}
{
\big(\sum_{k=0}^{\Delta - 1} \theta_k {\Delta - 1 \choose k} x^k \big)^2
}
.$$
Combining with the fact that non-negativity implies
\begin{eqnarray*}
\Bigg(\sum_{k=0}^{\Delta - 1} \theta_k {\Delta - 1 \choose k} x^k \Bigg)^2
&=& \sum_{i = 0}^{\Delta - 1} \sum_{j = 0}^{\Delta - 1} \theta_i \theta_j {\Delta - 1 \choose i} {\Delta - 1 \choose j} x^{i + j}
\\&\geq& \sum_{i = 0}^{\Delta - 1} \sum_{j = 0}^{\Delta - 2} \theta_i \theta_j {\Delta - 1 \choose i} {\Delta - 1 \choose j} x^{i + j},
\end{eqnarray*}
we conclude that
\begin{eqnarray*}
|\partial_x f(x)| &\leq& (\Delta - 1) 
\frac{ 
\sum_{i=0}^{\Delta - 1} \sum_{j = 0}^{\Delta - 2} {\Delta - 1 \choose i} {\Delta - 2 \choose j}  \big| \theta_i \theta_{j + 2} - \theta_{i + 1} \theta_{j + 1} \big| x^{i + j}
}
{
\sum_{i = 0}^{\Delta - 1} \sum_{j = 0}^{\Delta - 2} \theta_i \theta_j {\Delta - 1 \choose i} {\Delta - 1 \choose j} x^{i + j}
}
\\&=&
(\Delta - 1) 
\frac
{ 
\sum_{i=0}^{\Delta - 1} \sum_{j = 0}^{\Delta - 2} 
\frac{
{\Delta - 1 \choose i} {\Delta - 2 \choose j} \big| \theta_i \theta_{j + 2} - \theta_{i + 1} \theta_{j + 1} \big| x^{i + j}
}
{
\theta_i \theta_j {\Delta - 1 \choose i} {\Delta - 1 \choose j} x^{i + j}
}
\big( \theta_i \theta_j {\Delta - 1 \choose i} {\Delta - 1 \choose j} x^{i + j} \big)
}
{
\sum_{i = 0}^{\Delta - 1} \sum_{j = 0}^{\Delta - 2} \theta_i \theta_j {\Delta - 1 \choose i} {\Delta - 1 \choose j} x^{i + j}
}
\\&\leq&
(\Delta - 1) 
\max_{
\substack{i \in [0,\Delta-1]\\j \in [0,\Delta - 2]}} 
\frac{
{\Delta - 1 \choose i} {\Delta - 2 \choose j} \big| \theta_i \theta_{j + 2} - \theta_{i + 1} \theta_{j + 1} \big| x^{i + j}
}
{
\theta_i \theta_j {\Delta - 1 \choose i} {\Delta - 1 \choose j} x^{i + j}
}
\\&=&
\max_{
\substack{i \in [0,\Delta-1]\\j \in [0,\Delta - 2]}} 
\bigg( \big(\Delta - (j + 1) \big)\big| \frac{\theta_{j + 2}}{\theta_j} - \frac{\theta_{i + 1}}{\theta_i} \frac{\theta_{j+1}}{\theta_j}\big| \bigg),
\end{eqnarray*}
where the final inequality follows from convexity.  Further noting that the definition of $\psi_{\mb\theta}$ and log-convexity together imply that for all $i \in [0, \Delta  - 1]$ and $j \in [0, \Delta - 2]$,
\begin{eqnarray*}
\big(\Delta - (j + 1) \big)\big|\frac{\theta_{j + 2}}{\theta_j} - \frac{\theta_{i + 1}}{\theta_i} \frac{\theta_{j+1}}{\theta_j}\big|
&=&
\big(\Delta - (j + 1) \big)\frac{\theta_{j+1}}{\theta_j}\big|\frac{\theta_{j + 2}}{\theta_{j+1}} - \frac{\theta_{i + 1}}{\theta_i}\big|
\\&\leq&
\psi_{\mb\theta}(\frac{\theta_{\Delta}}{\theta_{\Delta-1}} - \frac{\theta_1}{\theta_0}),
\end{eqnarray*}
we may combine the above with the chain rule and Lemma\ \ref{monlemma} to conclude that for all $x \geq 0$,
\begin{equation}\label{fderiv2}
|\partial_x f^{\Delta - 1}(x)| \leq (\Delta - 1) (\frac{\theta_{\Delta}}{\theta_{\Delta-1}})^{\Delta - 2} (\frac{\theta_{\Delta}}{\theta_{\Delta-1}} - \frac{\theta_1}{\theta_0}) \psi_{\mb\theta}.
\end{equation}
It follows from (\ref{gderiv1}), (\ref{fderiv2}), Lemmas\ \ref{lemma3} and\ \ref{monlemma}, the fact that $|a b - c d| \leq |a + c| |b - d| + |b + d| |a - c|$ for all $a,b,c,d \in \reals$, that for all $d \geq 2$,
\begin{eqnarray*}
|\zeta_{d+1} - \zeta_d| &=& \big|\lambda g(\zeta_d) f^{\Delta - 1}(\zeta_{d-1}) - \lambda g(\zeta_{d-1}) f^{\Delta - 1}(\zeta_{d-2})\big|
\\&\leq& \lambda \big|g(\zeta_d) + g(\zeta_{d-1})\big| \big| f^{\Delta - 1}(\zeta_{d-1}) - f^{\Delta - 1}(\zeta_{d-2}) \big| 
\\&\ &\ \ \ +\ \ \ \lambda\big|f^{\Delta - 1}(\zeta_{d-1}) + f^{\Delta - 1}(\zeta_{d-2}) \big| \big|g(\zeta_d) - g(\zeta_{d-1})\big| 
\\&\leq& \lambda \big(2 \theta^{-1}_0\big) \bigg(|\zeta_{d-1} - \zeta_{d-2}| (\Delta - 1) (\frac{\theta_{\Delta}}{\theta_{\Delta-1}})^{\Delta - 2}
(\frac{\theta_{\Delta}}{\theta_{\Delta-1}} - \frac{\theta_1}{\theta_0}) \psi_{\mb\theta} \bigg)
\\&\ &\ \ \ +\ \ \ \lambda \big(2 (\frac{\theta_{\Delta}}{\theta_{\Delta-1}})^{\Delta - 1}\big)\bigg(|\zeta_d - \zeta_{d-1}| \theta_0^{-1} \psi_{\mb\theta} \bigg)
\end{eqnarray*}
Note that Lemma\ \ref{theorem2}, combined with our proof of Theorem\ \ref{impliesdecay1} (in particular the fact that $\sup_{d \geq 0} \zeta_{2 d} \leq \underline{x} \leq \overline{y} \leq \inf_{d \geq 0} \zeta_{2 d + 1}$), implies that $\lbrace |\zeta_{d+1} - \zeta_d|, d \geq 0 \rbrace$ is monotone decreasing.  Combining the above, we conclude that 
$$
|\zeta_{d+1} - \zeta_d| \leq 2 \lambda \theta^{-1}_0 (\frac{\theta_{\Delta}}{\theta_{\Delta-1}})^{\Delta - 2} \psi_{\mb\theta} \bigg(
(\Delta - 1)(\frac{\theta_{\Delta}}{\theta_{\Delta-1}} - \frac{\theta_1}{\theta_0}) + \frac{\theta_{\Delta}}{\theta_{\Delta-1}} \bigg) |\zeta_{d-1} - \zeta_{d-2}|.$$
It thus follows from the definition of $\underline{\lambda}_{\mb\theta}$ that for all $\lambda < \underline{\lambda}_{\mb\theta}$, there exists $\rho \in (0,1)$ such that for all $d \geq 2$,
$|\zeta_{d+1} - \zeta_d| \leq \rho |\zeta_{d-1} - \zeta_{d-2}|$.  Combining with our proof of Theorem\ \ref{impliesdecay1} (in particular the fact that existence of $\lim_{d \rightarrow \infty} \zeta_d$ implies uniqueness) completes the proof.
\\\\\\\indent We now prove that $(\lambda,\mb\theta)$ does not belong to the uniqueness regime for all $\lambda > \overline{\lambda}_{\mb\theta}$.  We first show that for $\lambda = \overline{\lambda}_{\mb\theta}$, any non-negative solution to the system of equations (\ref{limitequation1a})-(\ref{limitequation2a}) of the form $(x, x)$ satisfies 
\begin{equation}\label{verify1}
x \geq \frac{3}{\Delta} \frac{\theta_0}{\theta_1}.
\end{equation}
Indeed, it follows from log-convexity that $\theta_k \leq \theta_0 (\frac{\theta_{\Delta}}{\theta_{\Delta-1}})^k, k = 0,\ldots,\Delta$.  Thus for all $x \geq 0$,
$$
\sum_{k=0}^{\Delta - 1} \theta_k {\Delta - 1 \choose k} x^k\ \ \ \leq\ \ \ \theta_0 \sum_{k=0}^{\Delta - 1} {\Delta - 1 \choose k} (\frac{\theta_{\Delta}}{\theta_{\Delta-1}} x )^k\ \ \ =\ \ \ \theta_0\big(1 + \frac{\theta_{\Delta}}{\theta_{\Delta-1}} x \big)^{\Delta - 1},$$
and

$$
g(x)\ \ \ \geq\ \ \ \bigg(\theta_0 \big(1 + \frac{\theta_{\Delta}}{\theta_{\Delta-1}} x \big)^{\Delta-1}\bigg)^{-1}\ \ \ \geq\ \ \ \theta^{-1}_0 \exp\big( - \Delta  \frac{\theta_{\Delta}}{\theta_{\Delta-1}} x \big).
$$
Thus by Lemma\ \ref{monlemma}, any non-negative solution to the system of equations (\ref{limitequation1a})-(\ref{limitequation2a}) of the form $(x, x)$ for $\lambda = \overline{\lambda}_{\mb\theta}$ satisfies
\begin{equation}\label{gineqbb}
x \exp\big( \Delta \frac{\theta_{\Delta}}{\theta_{\Delta-1}} x \big)\ \ \ \geq\ \ \ \frac{\overline{\lambda}_{\mb\theta}}{\theta_0} (\frac{\theta_1}{\theta_0})^{\Delta - 1}\ \ \ =\ \ \ \frac{3}{\Delta} \frac{\theta_0}{\theta_1} \exp\big( 3 \frac{\theta_{\Delta}}{\theta_{\Delta-1}} \frac{\theta_0}{\theta_1} \big).
\end{equation}
To complete the proof of (\ref{verify1}), we observe that in light of (\ref{gineqbb}), $x < \frac{3}{\Delta} \frac{\theta_0}{\theta_1}$ would yield a contradiction, since it would imply 
$$x \exp\big( \Delta \frac{\theta_{\Delta}}{\theta_{\Delta-1}} x \big) < \frac{3}{\Delta} \frac{\theta_0}{\theta_1} \exp\big( 3 \frac{\theta_{\Delta}}{\theta_{\Delta-1}} \frac{\theta_0}{\theta_1} \big).$$
\ \\\indent We next prove that for any $x \geq \frac{3}{\Delta} \frac{\theta_0}{\theta_1}$ and $M \geq 1$, 
\begin{equation}\label{etaclaim}
\eta(M,x) \stackrel{\Delta}{=} M g(M x) \leq g(x),
\end{equation}
 a property that will allow us to use Lemma\ \ref{lemma3} to explicitly demonstrate that $\lbrace \zeta_d, d \geq 0 \rbrace$ exhibits a parity dependence.  We proceed by showing that for any $x \geq \frac{3}{\Delta} \frac{\theta_0}{\theta_1}$ and $M \geq 1$, $\partial_M \eta(M,x) \leq 0$.  Since $\partial_x g(x) = - g^2(x)  \sum_{k=1}^{\Delta-1} {\Delta - 1 \choose k} \theta_k k x^{k-1}$, it follows from the chain rule that
\begin{eqnarray*}
\partial_M \eta(M,x) &=& - M x g^2(M x) \sum_{k=1}^{\Delta-1} {\Delta - 1 \choose k} \theta_k k (M x)^{k-1} + g(M x)
\\&=& g(M x) \bigg( 1 - \frac{ \sum_{k=1}^{\Delta-1} {\Delta - 1 \choose k} \theta_k k (M x)^k}{\sum_{k=0}^{\Delta-1} {\Delta - 1 \choose k} \theta_k (M x)^k} \bigg).
\end{eqnarray*}
By the non-negativity of $g$, it thus suffices to demonstrate that for any $x \geq \frac{3}{\Delta} \frac{\theta_0}{\theta_1}$ and $M \geq 1$,
\begin{equation}\label{shownonu}
\sum_{k=1}^{\Delta-1} {\Delta - 1 \choose k} \theta_k k (M x)^k - \sum_{k=0}^{\Delta-1} {\Delta - 1 \choose k} \theta_k (M x)^k
\end{equation}
is non-negative.  Note that (\ref{shownonu}) equals 
$$\sum_{k = 1}^{\Delta - 1} {\Delta - 1 \choose k} \theta_k (k - 1) (M x)^k - \theta_0,$$
which by non-negativity and the fact that $M \geq 1$ is at least ${\Delta - 1 \choose 2} \theta_2 x^2 - \theta_0$.
As log-convexity implies $\theta_2 \geq \theta_0 (\frac{\theta_1}{\theta_0})^2$, and it is easily verified that ${\Delta - 1 \choose 2} \geq \frac{\Delta^2}{9}$ for all $\Delta \geq 3$, we conclude that $x \geq \frac{3}{\Delta} \frac{\theta_0}{\theta_1}$ implies that (\ref{shownonu}) is at least $\frac{\Delta^2}{9} \theta_0 (\frac{\theta_1}{\theta_0})^2 \big(\frac{3}{\Delta} \frac{\theta_0}{\theta_1}\big)^2 - \theta_0 = 0$, completing the proof.
\\\\\indent We now use (\ref{verify1}) and (\ref{etaclaim}) to prove by induction that for $\lambda > \overline{\lambda}_{\mb\theta}$, $\lbrace \zeta_d, d \geq 0 \rbrace$ exhibits a parity dependence, mirroring our proof of Theorem\ \ref{impliesdecay1}.
Recall from our proof of Theorem\ \ref{impliesdecay1} that for $\lambda = \overline{\lambda}_{\mb\theta}$, the system of equations 
(\ref{limitequation1a}) - (\ref{limitequation2a}) always has at least one non-negative solution of the form $(x, x)$.  Let us fix any such solution $(x_{\mb\theta},x_{\mb\theta})$, and note that $x_{\mb\theta} \geq \frac{3}{\Delta} \frac{\theta_0}{\theta_1}$.  We now prove by induction that for all $\lambda > \overline{\lambda}_{\mb\theta}$, $\lbrace \zeta_d, d \geq 0 \rbrace$ has a non-vanishing parity-dependence, with even values lying below $x_{\mb\theta}$, and odd values lying above $\frac{\lambda}{\overline{\lambda}_{\mb\theta}} x_{\mb\theta}$.  The $d = 0$ base case follows from non-negativity.  For the case $d = 1$, recall from definitions that $\zeta_1 = \lambda \theta^{-1}_0 (\frac{\theta_{\Delta}}{\theta_{\Delta - 1}})^{\Delta - 1}$.  However, by virtue of satisfying (\ref{limitequation1a}) - (\ref{limitequation2a}) with $\overline{\lambda}_{\mb\theta}$ and Lemma\ \ref{monlemma}, we have $x_{\mb\theta} \leq \overline{\lambda}_{\mb\theta} \theta^{-1}_0 (\frac{\theta_{\Delta}}{\theta_{\Delta - 1}})^{\Delta - 1}$.  Combining the above completes the proof for the $d = 1$ case.  Now, proceeding by induction, suppose that for some $d \geq 1$ and all $k \in \lbrace 0,\ldots, d - 1 \rbrace$, $\zeta_{2k} \leq x_{\mb\theta}$, and $\zeta_{2k + 1} \geq \frac{\lambda}{\overline{\lambda}_{\mb\theta}} x_{\mb\theta}$.  Then it follows from Lemma\ \ref{lemma3}, the monotonicity of $f$ and $g$ implied by Lemma\ \ref{monlemma}, (\ref{verify1}), and (\ref{etaclaim}) that
\begin{eqnarray*}
\zeta_{2d} &=& \lambda g(\zeta_{2d - 1}) f^{\Delta - 1}(\zeta_{2d - 2})
\\&\leq& \lambda g(\frac{\lambda}{\overline{\lambda}_{\mb\theta}} x_{\mb\theta}) f^{\Delta - 1}(x_{\mb\theta})
\\&\leq& \lambda \frac{\overline{\lambda}_{\mb\theta}}{\lambda} g(x_{\mb\theta}) f^{\Delta - 1}(x_{\mb\theta})\ \ \ =\ \ \ \overline{\lambda}_{\mb\theta} g(x_{\mb\theta}) f^{\Delta - 1}(x_{\mb\theta})\ \ \ =\ \ \ x_{\mb\theta},
\end{eqnarray*}
with the final inequality following from (\ref{etaclaim}).  Similarly,
\begin{eqnarray*}
\zeta_{2d + 1} &=& \lambda g(\zeta_{2d}) f^{\Delta - 1}(\zeta_{2d - 1})
\\&\geq& \lambda g(x_{\mb\theta}) f^{\Delta - 1}(\frac{\lambda}{\overline{\lambda}_{\mb\theta}} x_{\mb\theta}) 
\\&=& \frac{\lambda}{\overline{\lambda}_{\mb\theta}} \overline{\lambda}_{\mb\theta}  g(x_{\mb\theta}) f^{\Delta - 1}(\frac{\lambda}{\overline{\lambda}_{\mb\theta}} x_{\mb\theta})
\ \ \ \geq\ \ \ \frac{\lambda}{\overline{\lambda}_{\mb\theta}} \overline{\lambda}_{\mb\theta}  g(x_{\mb\theta}) f^{\Delta - 1}(x_{\mb\theta})\ \ \ =\ \ \ \frac{\lambda}{\overline{\lambda}_{\mb\theta}} x_{\mb\theta},
\end{eqnarray*}
with the final inequality following from the monotonicity of $f$ and fact that $\frac{\lambda}{\overline{\lambda}_{\mb\theta}} > 1$.  This completes the desired induction, demonstrating that $\lbrace \zeta_d, d \geq 0 \rbrace$ exhibits the stated parity-dependence.  
Combining with our proof of Theorem\ \ref{impliesdecay1} (in particular the fact that non-existence of $\lim_{d \rightarrow \infty} \zeta_d$ implies non-uniqueness) completes the proof.
\end{proof}
\section{A perturbative analysis, and proof of Theorem\ \ref{whenunique}}\label{Sec3}
In this section, we perform a perturbative analysis of the system of equations arising from our necessary and sufficient conditions for uniqueness, proving Theorem\ \ref{whenunique}.  First, it will be useful to rewrite the system of equations (\ref{limitequation1a}) - (\ref{limitequation2a}), which will allow us to apply known results from the theory of dynamical systems.  
Note that if $p_{\mb\theta}(x) \stackrel{\Delta}{=} x f_{\mb\theta}^{-(\Delta-1)}(x)$ is strictly increasing on $\big[0, \lambda 
(\frac{\theta_{\Delta}}{\theta_{\Delta-1}})^{\Delta - 1} \theta_0^{-1}\big]$, then it follows from Lemma\ \ref{monlemma} that $p_{\mb\theta}$ has a well-defined and unique inverse $p_{\mb\theta}^{\leftarrow}$, with domain a superset of $[0, \lambda \theta_0^{-1}]$ and range a subset of $\reals^+$, i.e. $p_{\mb\theta}^{\leftarrow}\big( p_{\mb\theta}(x) \big) = x$.  In this case we can define $q_{\lambda,\mb\theta}(x) \stackrel{\Delta}{=} p_{\mb\theta}^{\leftarrow}\big( \lambda g_{\mb\theta}(x) \big)$, and we observe that the system of equations (\ref{limitequation1a}) - (\ref{limitequation2a}) may be rewritten as follows.
\begin{observation}\label{rewritesoe}
If $\mb\theta$ is log-convex, and $p_{\mb\theta}(x)$ is strictly increasing on $\big[ 0 , \lambda (\frac{\theta_{\Delta}}{\theta_{\Delta-1}})^{\Delta - 1} \theta_0^{-1} \big]$, then on $\reals^+ \times \reals^+$, the system of equations (\ref{limitequation1a}) - (\ref{limitequation2a}) is equivalent to the system of equations
\begin{equation}\label{limitequation1aaa}
q_{\lambda,\mb\theta}\big(q_{\lambda,\mb\theta}(x)\big)\ =\ x,
\end{equation}
\begin{equation}\label{limitequation2aaa}
y =\ q_{\lambda,\mb\theta}(x).
\end{equation}
Furthermore, $q_{\lambda,\mb\theta}$ is strictly decreasing on $\reals^+$, and the equation $q_{\lambda,\mb\theta}(x) = x$ has a unique solution $x_{\lambda,\mb\theta}$ on $\reals^+$.  Also, it follows from Lemma\ \ref{monlemma} that every solution $(x,y)$ to the system of equations (\ref{limitequation1aaa}) - (\ref{limitequation2aaa}) on $\reals^+ \times \reals^+$ satisfies $0 \leq x,y \leq \lambda (\frac{\theta_{\Delta}}{\theta_{\Delta-1}})^{\Delta-1} \theta_0^{-1}$.  In addition, $x \in [0,
\lambda (\frac{\theta_{\Delta}}{\theta_{\Delta-1}})^{\Delta-1} \theta_0^{-1}]$ implies $q_{\lambda,\mb\theta}(x) \in [0,
\lambda (\frac{\theta_{\Delta}}{\theta_{\Delta-1}})^{\Delta-1} \theta_0^{-1}]$.
\end{observation}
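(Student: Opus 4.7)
The plan is to verify each of the listed claims, most of which follow directly from the hypotheses and the bounds (\ref{boundfg1})--(\ref{boundfg2}) on $f_{\mb\theta}$ and $g_{\mb\theta}$ established in the proof of Lemma \ref{bocadillo}. The first task is to confirm that $q_{\lambda,\mb\theta}$ is well-defined on $\reals^+$ and maps the stated interval to itself. By (\ref{boundfg2}), $p_{\mb\theta}\big(\lambda(\theta_\Delta/\theta_{\Delta-1})^{\Delta-1}\theta_0^{-1}\big) \geq \lambda\theta_0^{-1}$, while $p_{\mb\theta}(0) = 0$; combined with the assumed strict monotonicity of $p_{\mb\theta}$ on the interval, $p_{\mb\theta}^{\leftarrow}$ is well-defined on $[0, \lambda\theta_0^{-1}]$, with range contained in $[0, \lambda(\theta_\Delta/\theta_{\Delta-1})^{\Delta-1}\theta_0^{-1}]$. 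Since (\ref{boundfg1}) gives $\lambda g_{\mb\theta}(x) \leq \lambda\theta_0^{-1}$ for every $x \geq 0$, the composition $q_{\lambda,\mb\theta}(x) = p_{\mb\theta}^{\leftarrow}\big(\lambda g_{\mb\theta}(x)\big)$ is well-defined on $\reals^+$, and moreover its image lies in $[0, \lambda(\theta_\Delta/\theta_{\Delta-1})^{\Delta-1}\theta_0^{-1}]$, giving the invariance claim at the end of the statement.

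To show that the two systems are equivalent on $\reals^+ \times \reals^+$, I would rewrite (\ref{limitequation1a})--(\ref{limitequation2a}) as $p_{\mb\theta}(x) = \lambda g_{\mb\theta}(y)$ and $p_{\mb\theta}(y) = \lambda g_{\mb\theta}(x)$. The a priori bound $0 \leq x, y \leq \lambda(\theta_\Delta/\theta_{\Delta-1})^{\Delta-1}\theta_0^{-1}$ is immediate from the original form, since $x = \lambda g_{\mb\theta}(y) f_{\mb\theta}^{\Delta-1}(x) \leq \lambda\theta_0^{-1}(\theta_\Delta/\theta_{\Delta-1})^{\Delta-1}$ by (\ref{boundfg1})--(\ref{boundfg2}), and symmetrically for $y$; this is precisely the range on which $p_{\mb\theta}^{\leftarrow}$ is defined, so applying $p_{\mb\theta}^{\leftarrow}$ to the second equation yields $y = q_{\lambda,\mb\theta}(x)$, and substituting into the first gives $x = p_{\mb\theta}^{\leftarrow}\big(\lambda g_{\mb\theta}(y)\big) = q_{\lambda,\mb\theta}(y) = q_{\lambda,\mb\theta}\big(q_{\lambda,\mb\theta}(x)\big)$. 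The reverse implication is obtained by applying $p_{\mb\theta}$ to both equations of the rewritten system.

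Finally, for monotonicity and uniqueness of the fixed point, note that $g_{\mb\theta}^{-1}$ is a polynomial in $x$ with strictly positive coefficients, so $g_{\mb\theta}$ is strictly decreasing on $\reals^+$; combined with the strict monotonicity of $p_{\mb\theta}^{\leftarrow}$, this shows $q_{\lambda,\mb\theta}$ is strictly decreasing. Thus $q_{\lambda,\mb\theta}(x) - x$ is continuous and strictly decreasing on $\reals^+$, strictly positive at $x = 0$ (where it equals $p_{\mb\theta}^{\leftarrow}(\lambda\theta_0^{-1}) > 0$), and tending to $-\infty$ as $x \to \infty$, since $g_{\mb\theta}(x) \to 0$ forces $q_{\lambda,\mb\theta}(x) \to p_{\mb\theta}^{\leftarrow}(0) = 0$. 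The intermediate value theorem then yields a unique $x_{\lambda,\mb\theta} \in \reals^+$ with $q_{\lambda,\mb\theta}(x_{\lambda,\mb\theta}) = x_{\lambda,\mb\theta}$. The only real bookkeeping concern throughout is verifying that the domain on which $p_{\mb\theta}^{\leftarrow}$ is defined actually contains every quantity to which it is applied, and once the uniform bound $\lambda g_{\mb\theta}(x) \leq \lambda\theta_0^{-1}$ is in hand there is no further obstacle.
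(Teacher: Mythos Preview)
Your proof is correct and follows essentially the same approach as the paper, which treats this as an observation following directly from the bounds (\ref{boundfg1})--(\ref{boundfg2}) and the definition of $p_{\mb\theta}^{\leftarrow}$, without writing out a detailed argument. You have filled in the details exactly as intended: the key points are the a priori bound $x,y \leq \lambda(\theta_\Delta/\theta_{\Delta-1})^{\Delta-1}\theta_0^{-1}$ from (\ref{boundfg1})--(\ref{boundfg2}), which ensures all applications of $p_{\mb\theta}^{\leftarrow}$ are legitimate, and the strict monotonicity of $g_{\mb\theta}$ and $p_{\mb\theta}^{\leftarrow}$, which yields both the monotonicity of $q_{\lambda,\mb\theta}$ and the uniqueness of its fixed point.
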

It is well-known from the theory of dynamical systems that under certain additional assumptions on $q_{\lambda,\mb\theta}$, necessary and sufficient conditions for when the system of equations (\ref{limitequation1aaa}) - (\ref{limitequation2aaa}) has a unique solution can be stated in terms of whether the map $q_{\lambda,\mb\theta}$ exhibits a certain local stability at the fixed point $x_{\lambda,\mb\theta}$.  We now make this precise, and note that our approach is similar to that taken previously in the literature to analyze related models (cf. \cite{LRZ.06}).  Recall that for a thrice-differentiable function $F(x)$ with non-vanishing derivative on some interval $I$, we define (on $I$) the \emph{Schwarzian derivative} of $F$ as the function 
$$S[F] \stackrel{\Delta}{=} \frac{ \frac{d^3}{dx^3} F }{\frac{d}{dx} F} - \frac{3}{2} \big( \frac{ \frac{d^2}{dx^2} F }{\frac{d}{dx} F} \big)^2.$$
For a function $F$ and $n \geq 1$, let $F^{\lbrace n \rbrace}(x)$ denote the $n$-fold iterate of $F$, i.e. $F^{\lbrace n+1 \rbrace}(x) = F\big(F^{\lbrace n \rbrace}(x)\big)$, with $F^{\lbrace 1 \rbrace}(x) = F(x)$.  Then the following well-known result from dynamical systems is stated in Lemma 4.3 of \cite{LRZ.06}.  
\begin{theorem}\label{dynamical1}
Suppose $I = [L,R] \subseteq {\mathcal R}$ is some closed bounded interval, and $F$ is some function with the following properties.
\begin{enumerate}[(i)]
\item $F$ has domain $I$, and range a subset of $I$.  \label{dynamo1}
\item The third derivative of $F$ exists and is continuous on $I$. \label{dynamo2}
\item The equation $x = F(x)$ has a unique solution $x^*$ on $I$. \label{dynamo3}
\item $F$ is a decreasing function on $I$. \label{dynamo4}
\item $S[F](x) < 0$ for all $x \in I$. \label{dynamo5}
\end{enumerate}
Then $\lim_{n \rightarrow \infty} F^{\lbrace n \rbrace}(x)$ exists and equals $x^*$ for all $x \in I$ iff $|\partial_x F(x^*)| \leq 1$ iff 
$\lim_{n \rightarrow \infty} F^{\lbrace n \rbrace}(L) = x^*$.
\end{theorem}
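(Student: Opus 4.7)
Since this statement is attributed to Lemma 4.3 of \cite{LRZ.06}, one option is simply to cite the reference; for a self-contained plan I would proceed by analyzing the second iterate $H \stackrel{\Delta}{=} F \circ F$, which converts the decreasing map $F$ into an increasing map. The plan is to show that $F^{\lbrace n \rbrace}(x) \to x^*$ for all $x \in I$ is equivalent to $H^{\lbrace n \rbrace}(x) \to x^*$ for all $x \in I$ (the odd and even subsequences govern each other via continuity of $F$), and then reduce the entire question to the monotone dynamics of $H$. Since $H$ is continuous, increasing, and maps $I$ into $I$, the orbits $H^{\lbrace n \rbrace}(L)$ and $H^{\lbrace n \rbrace}(R)$ are monotone and converge to fixed points of $H$ inside $I$; the core of the argument is then to control how many such fixed points exist.

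The key technical input is the composition identity $S[H](x) = S[F]\bigl(F(x)\bigr) \bigl(F'(x)\bigr)^2 + S[F](x)$, which together with (\ref{dynamo4})--(\ref{dynamo5}) gives $S[H] < 0$ on $I$. I would then invoke the standard fact that a function with strictly negative Schwarzian derivative cannot have a positive local minimum of its derivative: at such a point $z$ one would have $H''(z) = 0$ and $H'''(z) \geq 0$, forcing $S[H](z) \geq 0$. Applying this to $H - \mathrm{id}$ via Rolle-style reasoning, the fixed points of $H$ in $I$ form a set of cardinality at most three, and in fact any additional fixed points of $H$ must straddle $x^*$ in a geometry that is incompatible with $|H'(x^*)| = |F'(x^*)|^2 \leq 1$. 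This yields, in the stable regime, that $x^*$ is the unique fixed point of $H$ in $I$, so the monotone sequences from $L$ and $R$ are forced to converge to $x^*$, and a sandwich argument (using monotonicity of $H$) propagates this to every $x \in I$.

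For the converse, if $|F'(x^*)| > 1$, then $H'(x^*) > 1$ makes $x^*$ an unstable fixed point of the increasing map $H$: in any neighborhood there are points $x < x^*$ with $H(x) < x$, so the monotone sequence $H^{\lbrace n \rbrace}(L)$ is nondecreasing and bounded above by some fixed point of $H$ strictly below $x^*$, hence cannot tend to $x^*$. This simultaneously shows (iii) fails and gives the last equivalence with iteration from the left endpoint. The three conditions are then linked in a cycle: convergence for all $x$ trivially implies convergence from $L$; convergence from $L$, combined with monotonicity of $H$ and the Schwarzian count of fixed points, forces $x^*$ to be the unique fixed point of $H$ and hence $|F'(x^*)| \leq 1$; and the first part of the argument closes the loop.

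The main obstacle is the Schwarzian step: making precise that negative Schwarzian derivative together with the boundary behavior forces the $\omega$-limit set of every point in $I$ to be $\lbrace x^* \rbrace$ in the stable case. The cleanest way is to invoke Singer's theorem, which states that for a $C^3$ interval map with $S[F] < 0$, each attracting periodic orbit attracts a critical point or a boundary point of the domain. Since $F$ is strictly monotone on $I$, it has no interior critical points, so the only candidate attractor is the one drawing in $L$ and $R$; combined with the local stability criterion $|F'(x^*)| \leq 1$ at the unique fixed point $x^*$, this yields the claim without having to reprove the fixed-point count from scratch.
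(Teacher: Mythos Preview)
The paper does not prove this theorem at all: it is quoted verbatim as a known result, with the sentence ``the following well-known result from dynamical systems is stated in Lemma 4.3 of \cite{LRZ.06}.'' So there is no in-paper argument to compare against; your opening remark that one option is simply to cite the reference is exactly what the paper does.

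Your self-contained plan is sound and follows the standard route for this kind of statement. Passing to the second iterate $H = F \circ F$ to obtain an increasing self-map of $I$, using the chain rule $S[H](x) = S[F]\bigl(F(x)\bigr)\bigl(F'(x)\bigr)^{2} + S[F](x)$ to propagate the negative Schwarzian, and then invoking Singer's theorem (no interior critical points since $F$ is strictly monotone, so any attracting cycle must capture an endpoint) is precisely how this is typically argued. The one place to be careful is the neutral case $|F'(x^*)| = 1$, i.e.\ $H'(x^*) = 1$: Singer's theorem as usually stated concerns attracting orbits, so you should note explicitly that with $S[H](x^*) < 0$ the third-order Taylor expansion forces $x^*$ to be (topologically) attracting even when $H'(x^*) = 1$, so the endpoint-capture argument still applies. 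With that caveat, your outline is correct and is essentially the content of the cited lemma.
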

We now customize Theorem\ \ref{dynamical1} to our own setting.  Let $r_{\mb\theta}(x) \stackrel{\Delta}{=} |\frac{ \partial_x g_{\mb\theta}(x) }{\partial_x p_{\mb\theta}(x)}|$.  Then combined with Observation\ \ref{rewritesoe}, Theorem\ \ref{dynamical1} implies the following.
\begin{observation}\label{dynamicobs2}
Suppose that $\mb\theta$ is log-convex, $p_{\mb\theta}(x)$ is strictly increasing on $\big[ 0 , \lambda (\frac{\theta_{\Delta}}{\theta_{\Delta-1}})^{\Delta - 1} \theta_0^{-1} \big]$, and the conditions of Theorem\ \ref{dynamical1} are satisfied with $F = 
q_{\lambda,\mb\theta}, I = \big[ 0 , \lambda (\frac{\theta_{\Delta}}{\theta_{\Delta-1}})^{\Delta - 1} \theta_0^{-1} \big]$.  Then 
$(\lambda,\mb\theta)$  belongs to the uniqueness regime iff  $r_{\mb\theta}(x_{\lambda,\mb\theta}) \leq \lambda^{-1}$.
\end{observation}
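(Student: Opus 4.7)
The plan is to apply Theorem~\ref{dynamical1} to $F = q_{\lambda,\mb\theta}$ on $I = \bigl[0,\ \lambda (\theta_{\Delta}/\theta_{\Delta-1})^{\Delta-1}\theta_0^{-1}\bigr]$ and translate its derivative-at-the-fixed-point criterion into the claimed bound on $r_{\mb\theta}(x_{\lambda,\mb\theta})$. By Theorem~\ref{impliesdecay1} combined with Observation~\ref{rewritesoe}, $(\lambda,\mb\theta)$ admits a unique infinite-volume Gibbs measure on the infinite $\Delta$-regular tree if and only if $q_{\lambda,\mb\theta}\circ q_{\lambda,\mb\theta}$ has $x_{\lambda,\mb\theta}$ as its only fixed point on $\reals^+$; indeed, such a fixed point $x$ gives a solution $(x, q_{\lambda,\mb\theta}(x))$ to (\ref{limitequation1aaa})--(\ref{limitequation2aaa}), and every such solution must lie in $I\times I$ by Observation~\ref{rewritesoe}. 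Hence it suffices to connect the absence of nontrivial fixed points of the second iterate of $q_{\lambda,\mb\theta}$ on $I$ to the condition $r_{\mb\theta}(x_{\lambda,\mb\theta})\leq\lambda^{-1}$.

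The first step is to rewrite the derivative of $q_{\lambda,\mb\theta}$ at its fixed point. Applying the chain rule and the inverse function rule to $q_{\lambda,\mb\theta}(x) = p_{\mb\theta}^{\leftarrow}(\lambda g_{\mb\theta}(x))$ yields
\[
\partial_x q_{\lambda,\mb\theta}(x) \;=\; \frac{\lambda\, \partial_x g_{\mb\theta}(x)}{\partial_y p_{\mb\theta}(y)\bigl|_{y = q_{\lambda,\mb\theta}(x)}}.
\]
Evaluating at $x = x_{\lambda,\mb\theta}$ makes $y = x_{\lambda,\mb\theta}$, so the denominator collapses to $\partial_x p_{\mb\theta}(x_{\lambda,\mb\theta})$; and since $q_{\lambda,\mb\theta}$ is decreasing on $\reals^+$ by Observation~\ref{rewritesoe}, the derivative is non-positive. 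Taking absolute values gives $|\partial_x q_{\lambda,\mb\theta}(x_{\lambda,\mb\theta})| = \lambda\, r_{\mb\theta}(x_{\lambda,\mb\theta})$, so Theorem~\ref{dynamical1}'s condition $|\partial_x F(x^*)|\leq 1$ translates precisely into $r_{\mb\theta}(x_{\lambda,\mb\theta})\leq\lambda^{-1}$.

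The forward direction of the observation is then immediate from Theorem~\ref{dynamical1}: if $r_{\mb\theta}(x_{\lambda,\mb\theta})\leq\lambda^{-1}$, every orbit of $q_{\lambda,\mb\theta}$ starting in $I$ converges to $x_{\lambda,\mb\theta}$, which precludes any $2$-cycle of $q_{\lambda,\mb\theta}$ and hence any fixed point of $q_{\lambda,\mb\theta}\circ q_{\lambda,\mb\theta}$ other than $x_{\lambda,\mb\theta}$.

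The main obstacle is the converse, and this is where some care is needed: one must promote uniqueness of the fixed point of the second iterate to the derivative bound. The idea is that since $q_{\lambda,\mb\theta}$ is continuous and strictly decreasing on the invariant interval $I$, the composition $h \stackrel{\Delta}{=} q_{\lambda,\mb\theta}\circ q_{\lambda,\mb\theta}$ is continuous, increasing, and maps $I$ into $I$. The iterates $\lbrace h^{\{n\}}(z)\rbrace$ for any $z\in I$ are therefore monotone (the direction determined by the sign of $h(z)-z$) and bounded, so they converge; by continuity the limit is a fixed point of $h$. If the system (\ref{limitequation1aaa})--(\ref{limitequation2aaa}) has a unique non-negative solution, then $x_{\lambda,\mb\theta}$ is the only fixed point of $h$ in $I$, so both $\lim_n h^{\{n\}}(0)$ and $\lim_n h^{\{n\}}(q_{\lambda,\mb\theta}(0))$ equal $x_{\lambda,\mb\theta}$. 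These are the even- and odd-index subsequences of $\lbrace q_{\lambda,\mb\theta}^{\{n\}}(0)\rbrace$, so $\lim_n q_{\lambda,\mb\theta}^{\{n\}}(0) = x_{\lambda,\mb\theta}$. The third clause of Theorem~\ref{dynamical1} (with $L=0$) then delivers $|\partial_x q_{\lambda,\mb\theta}(x_{\lambda,\mb\theta})|\leq 1$, i.e., $r_{\mb\theta}(x_{\lambda,\mb\theta})\leq\lambda^{-1}$, completing the proof.
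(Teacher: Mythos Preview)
Your proof is correct and follows essentially the same route as the paper: both reduce the question to whether $|\partial_x q_{\lambda,\mb\theta}(x_{\lambda,\mb\theta})|\le 1$ via the chain rule/inverse function computation $\partial_x q_{\lambda,\mb\theta}(x_{\lambda,\mb\theta}) = \lambda\,\partial_x g_{\mb\theta}(x_{\lambda,\mb\theta})/\partial_x p_{\mb\theta}(x_{\lambda,\mb\theta})$, and both invoke Theorem~\ref{dynamical1} together with the monotonicity of the even/odd subsequences of $\{q_{\lambda,\mb\theta}^{\{n\}}(0)\}$ to handle the converse. The only cosmetic difference is that the paper argues the contrapositive biconditional (non-uniqueness $\Leftrightarrow$ derivative $>1$) while you argue the direct one.
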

\begin{proof}
We first prove that the system of equations (\ref{limitequation1aaa}) - (\ref{limitequation2aaa}) does not have a unique solution on $\reals^+ \times \reals^+$ iff $|\partial_x q_{\lambda,\mb\theta}(x_{\lambda,\mb\theta})| > 1$.  Suppose the system of equations (\ref{limitequation1aaa}) - (\ref{limitequation2aaa}) does not have a unique solution on $\reals^+ \times \reals^+$.  Since Observation\ \ref{rewritesoe} implies that the equation $q_{\lambda,\mb\theta}(x) = x$ has a unique solution $x_{\lambda,\mb\theta}$, it follows that there must exist a solution $(x,y)$ to the system of equations (\ref{limitequation1aaa}) - (\ref{limitequation2aaa}) with $x < y$.  In this case, $\lim_{n \rightarrow \infty} q_{\lambda,\mb\theta}^{\lbrace n \rbrace}(x)$ does not exist, as the series alternates between $x$ and $y$, and it follows from Theorem\ \ref{dynamical1} that 
$|\partial_x q_{\lambda,\mb\theta}(x_{\lambda,\mb\theta})| > 1$.
\\\indent Alternatively, suppose that $|\partial_x q_{\lambda,\mb\theta}(x_{\lambda,\mb\theta})| > 1$.  Then it follows from 
Theorem\ \ref{dynamical1} that $\lim_{n \rightarrow \infty} q_{\lambda,\mb\theta}^{\lbrace n \rbrace}(0)$ does not exist.  
However, both $Z_{\textrm{even}} \stackrel{\Delta}{=} \lim_{n \rightarrow \infty} q_{\lambda,\mb\theta}^{\lbrace 2 n \rbrace}(0)$ and $Z_{\textrm{odd}} \stackrel{\Delta}{=} \lim_{n \rightarrow \infty} q_{\lambda,\mb\theta}^{\lbrace 2 n + 1 \rbrace}(0)$ both exist.  Indeed, this follows from the fact that $q^{\lbrace 1 \rbrace}_{\lambda,\mb\theta}$ is decreasing, $q^{\lbrace 2 \rbrace}_{\lambda,\mb\theta}$ is increasing, $q^{\lbrace 2 \rbrace}_{\lambda,\mb\theta}(0) \geq 0$, and $q^{\lbrace 3 \rbrace}_{\lambda,\mb\theta}(0) \leq q^{\lbrace 1 \rbrace}_{\lambda,\mb\theta}(0)$, which implies that both relevant sequences are appropriately monotone.  Noting that the non-existence of the stated limit implies $Z_{\textrm{even}} \neq Z_{\textrm{odd}}$, and the pair $(Z_{\textrm{even}},Z_{\textrm{odd}})$ must be a solution to the system of equations (\ref{limitequation1aaa}) - (\ref{limitequation2aaa}), completes the desired demonstration.
\\\indent As it follows from elementary calculus that $\partial_x q_{\lambda,\mb\theta}(x_{\lambda,\mb\theta}) =   \lambda \frac{ \partial_x g_{\mb\theta}(x_{\lambda,\mb\theta}) }{ \partial_x p_{\mb\theta}(x_{\lambda,\mb\theta})}$, combining the above with Theorem\ \ref{impliesdecay1} and Observation\ \ref{rewritesoe} completes the proof.
\end{proof}
We note that $p_{\mb\theta}$ is not necessarily an increasing function for the case of general log-convex $\mb{\theta}$.  Furthermore, even when $p_{\mb\theta}$ is increasing, an analysis of $S[q_{\lambda,\mb\theta}]$ seems difficult, and the associated uniqueness regime of the parameter space seems to be quite complex.  However, for the special setting in which $\mb{\theta}$ belongs to a neighborhood of the all ones vector, in which case the associated M.r.f. becomes a perturbation of the hardcore model at criticality, these difficulties can be overcome by expanding the relevant functions using appropriate Taylor series.   The theory of real analytic functions provides a convenient framework for proving the validity of these expansions, and we refer the reader to \cite{krantz2002primer} for details.  Using this framework, we prove the following.
\begin{lemma}\label{hdynamo}
For each convex vector $\mathbf{c}$, and $U \in \reals^+$, there exists $\delta_{\mathbf{c},U} > 0$ such that the following hold.\begin{enumerate}[(i)]
\item  $g_{\mathbf{1} + \mathbf{c} h}(x)$ and $p_{\mathbf{1} + \mathbf{c} h}(x)$ are jointly real analytic functions of $(h,x)$ on $[0,\delta_{\mathbf{c},U}] \times [0,U]$.  For each fixed $h \in [0,\delta_{\mathbf{c},U}]$ and all $x \in [0,U]$, $\partial_x g_{\mathbf{1} + \mathbf{c} h}(x) < 0$, and $\partial_x p_{\mathbf{1} + \mathbf{c} h}(x) > 0$. \label{hdynamo0}
\item For each fixed $h \in [0,\delta_{\mathbf{c},U}]$, $p_{\mathbf{1} + \mathbf{c} h}(x)$ has a well-defined and unique inverse $p^{\leftarrow}_{\mathbf{1} + \mathbf{c} h}(x)$ with domain a superset of $[0,U]$ and range a subset of $\reals^+$.  Furthermore, $p^{\leftarrow}_{\mathbf{1} + \mathbf{c} h}(x)$ is a jointly real analytic function of $(h,x)$ on $[0,\delta_{\mathbf{c},U}] \times [0,U]$. \label{hdynamo1}
\item $q_{\lambda_{\Delta},\mathbf{1} + \mathbf{c} h}(x)$, $\partial_x q_{\lambda_{\Delta},\mathbf{1} + \mathbf{c} h}(x)$, and $S[q_{\lambda_{\Delta},\mathbf{1} + \mathbf{c} h}](x)$ are all jointly real analytic functions of $(h,x)$ on $[0,\delta_{\mathbf{c},U}] \times  [0,U]$.  Furthermore $\partial_x q_{\lambda_{\Delta},\mathbf{1} + \mathbf{c} h}(x)$ and $S[q_{\lambda_{\Delta},\mathbf{1} + \mathbf{c} h}](x)$ are strictly negative for all $(h,x) \in [0,\delta_{\mathbf{c},U}] \times  [0,U]$. \label{hdynamo2}
\end{enumerate}
\end{lemma}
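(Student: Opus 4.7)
The plan is to exploit that at $h=0$ all the relevant functions reduce to elementary expressions: $g_{\mathbf{1}}(x) = (1+x)^{-(\Delta-1)}$, $f_{\mathbf{1}}(x) \equiv 1$, $p_{\mathbf{1}}(x) = x$, and hence $q_{\lambda_\Delta,\mathbf{1}}(x) = \lambda_\Delta (1+x)^{-(\Delta-1)}$, each of which is real analytic with nonvanishing denominators on any bounded subinterval of $\reals^+$. The perturbation $\mb\theta = \mathbf{1}+\mathbf{c}h$ replaces each $\theta_k$ by an affine function of $h$, so every formula defined in the excerpt becomes jointly polynomial or rational in $(h,x)$. The joint real analyticity claims and the strict sign conditions are then obtained by verifying them explicitly at $h=0$ on the compact set $[0,U]$, and extending to a neighborhood $h \in [0,\delta_{\mathbf{c},U}]$ by continuity and uniform-continuity arguments on the compact box $[0,\delta_{\mathbf{c},U}] \times [0,U]$.

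For part (i), the denominator $\sum_k (1 + c_k h)\binom{\Delta-1}{k} x^k$ defining $g_{\mathbf{1}+\mathbf{c}h}$ is a polynomial in $(h,x)$ that equals $(1+x)^{\Delta-1} \geq 1$ at $h=0$, hence is uniformly bounded away from zero on $[0,\delta_{\mathbf{c},U}] \times [0,U]$ for sufficiently small $\delta$; its reciprocal is therefore jointly real analytic. The same reasoning shows that $f_{\mathbf{1}+\mathbf{c}h}$ is jointly analytic and close to $1$, so $p_{\mathbf{1}+\mathbf{c}h}(x) = x f_{\mathbf{1}+\mathbf{c}h}^{-(\Delta-1)}(x)$ is jointly real analytic. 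Direct computation gives $\partial_x g_{\mathbf{1}}(x) = -(\Delta-1)(1+x)^{-\Delta} < 0$ and $\partial_x p_{\mathbf{1}}(x) = 1 > 0$ on $[0,U]$; since $\partial_x g$ and $\partial_x p$ are themselves continuous in $(h,x)$, the strict sign conditions persist for small $h$.

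For part (ii), I would apply the real analytic implicit function theorem to $F(h,x,y) = p_{\mathbf{1}+\mathbf{c}h}(x) - y$, which satisfies $\partial_x F > 0$ on the compact region of interest; this yields local analytic inverses, and strict monotonicity of $p_{\mathbf{1}+\mathbf{c}h}(\cdot)$ from part (i) allows these to be patched into a single function $p^{\leftarrow}_{\mathbf{1}+\mathbf{c}h}$, jointly real analytic in $(h,x)$, whose domain contains $[0,U]$ for every $h \in [0,\delta_{\mathbf{c},U}]$ once $\delta$ is shrunk to guarantee that the image of $[0,U+1]$ under $p_{\mathbf{1}+\mathbf{c}h}$ contains $[0,U]$. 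Part (iii) then follows by composition: $q_{\lambda_\Delta,\mathbf{1}+\mathbf{c}h}(x) = p^{\leftarrow}_{\mathbf{1}+\mathbf{c}h}\big(\lambda_\Delta g_{\mathbf{1}+\mathbf{c}h}(x)\big)$ is a composition of jointly analytic functions, and so are $\partial_x q$ and $S[q]$ (the Schwarzian being analytic since $\partial_x q$ is bounded away from $0$). At $h=0$, $\partial_x q_{\lambda_\Delta,\mathbf{1}}(x) = -\lambda_\Delta(\Delta-1)(1+x)^{-\Delta} < 0$, and a direct calculation yields $S[q_{\lambda_\Delta,\mathbf{1}}](x) = -\Delta(\Delta-2)/\big(2(1+x)^2\big)$, which is strictly negative on $[0,U]$ for every $\Delta \geq 3$; continuity then propagates the strict sign to a full neighborhood of $h=0$.

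The main obstacle I anticipate lies in part (ii): the analytic implicit function theorem supplies only local analytic inverses near a given base point, whereas the statement requires $p^{\leftarrow}_{\mathbf{1}+\mathbf{c}h}$ to be jointly analytic on a fixed rectangle $[0,\delta_{\mathbf{c},U}]\times[0,U]$ whose $x$-extent does not shrink with $h$. Overcoming this requires combining the local analytic inverses with the global strict monotonicity established in part (i), together with a uniform-in-$h$ control that the image of $p_{\mathbf{1}+\mathbf{c}h}$ on a slightly larger interval contains $[0,U]$, which in turn follows from the fact that $p_{\mathbf{1}}$ is the identity and from joint continuity of $p_{\mathbf{1}+\mathbf{c}h}(x)$ on the compact box. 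Once this inversion step is handled, parts (i) and (iii) reduce to algebraic manipulations of real analytic functions and explicit evaluations at $h=0$, which are routine.
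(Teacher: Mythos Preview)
Your proposal is correct and follows essentially the same approach as the paper: both arguments note that $g_{\mathbf{1}+\mathbf{c}h}$ and $p_{\mathbf{1}+\mathbf{c}h}$ are ratios of polynomials in $(h,x)$ with non-vanishing denominators on the compact box, then verify the sign conditions and the Schwarzian by computing $\partial_x g_{\mathbf{1}}(x) = -(\Delta-1)(1+x)^{-\Delta}$, $\partial_x p_{\mathbf{1}}(x) = 1$, $\partial_x q_{\lambda_\Delta,\mathbf{1}}(x) = -(\Delta-1)\lambda_\Delta(1+x)^{-\Delta}$, and $S[q_{\lambda_\Delta,\mathbf{1}}](x) = -\Delta(\Delta-2)/\big(2(1+x)^2\big)$, exactly as you do, and invoke continuity and the real analytic inverse/implicit function theorem. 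Your discussion of the local-to-global issue in part~(ii) is in fact more careful than the paper, which simply cites the inverse function theorem for real analytic functions without addressing how the domain of $p^{\leftarrow}$ is guaranteed to contain all of $[0,U]$ uniformly in $h$.
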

\begin{proof}
We prove (\ref{hdynamo0}) - (\ref{hdynamo2}) in order.
\\\\ (\ref{hdynamo0}).  The claim with respect to real analyticity follows from the fact that for any fixed $U_1$, there exists $\delta_{1,U_1} > 0$ such that both $g_{\mathbf{1} + \mathbf{c} h}(x)$ and $p_{\mathbf{1} + \mathbf{c} h}(x)$ are ratios of non-vanishing polynomials of $(h,x)$ on $[0,\delta_{1,U_1}] \times [0,U_1]$.
That there exists $\delta_{2,U_1} > 0$ such that $\partial_x g_{\mathbf{1} + \mathbf{c} h}(x) < 0$ and $\partial_x p_{\mathbf{1} + \mathbf{c} h}(x) > 0$ for all  $(h,x) \in [0,\delta_{2,U_1}] \times [0,U_1]$ then follows from the fact that $\partial_x g_{\mathbf{1}}(x) = - (\Delta - 1) (x + 1)^{-\Delta}$, and 
$\partial_x p_{\mathbf{1}}(x) = 1$. 
\\\\ (\ref{hdynamo1}).  The claim follows from (\ref{hdynamo0}) and the inverse function theorem for real analytic functions (cf. \cite{krantz2002primer}).
\\\\(\ref{hdynamo2}).  The claim with respect to $q_{\lambda_{\Delta},\mathbf{1} + \mathbf{c} h}(x)$ and $\partial_x q_{\lambda_{\Delta},\mathbf{1} + \mathbf{c} h}(x)$ follows from (\ref{hdynamo1}), and the fact that $\partial_x q_{\lambda_{\Delta},\mathbf{1}}(x) = - (\Delta - 1) \lambda_{\Delta} (x + 1)^{-\Delta}$.  As this implies that $\partial_x q_{\lambda_{\Delta},\mathbf{1}}(x)$ is strictly negative (and thus non-vanishing), the desired claim with respect to $S[q_{\lambda_{\Delta},\mathbf{1} + \mathbf{c} h}](x)$ then follows from the fact that $S[q_{\lambda_{\Delta},\mathbf{1}}](x) = \frac{- \Delta (\Delta - 2)}{2 (x+1)^2}$.
\end{proof}
Combining Observations\ \ref{dynamicobs2}\ and\ \ref{limlogcon} with Lemma\ \ref{hdynamo} immediately yields necessary and sufficient conditions for uniqueness when $\mb\theta$ is a convex perturbation of $\mathbf{1}$.
\begin{corollary}\label{dynamocor}
For each convex vector $\mathbf{c}$, there exists $\delta_{\mathbf{c}} > 0$ such that the following hold for all $h \in [0,\delta_{\mathbf{c}}]$.
\begin{enumerate}[(i)]
\item $q_{\lambda_{\Delta},\mathbf{1} + \mathbf{c} h}(x) - x$ is strictly decreasing on $[0 , 2 \lambda_{\Delta}]$, and has a unique zero $x_{\lambda_{\Delta}, \mathbf{1} + \mathbf{c} h}$ on $[0 , 2 \lambda_{\Delta}]$. \label{dynamocora}
\item $(\lambda_{\Delta},\mathbf{1} + \mathbf{c} h)$ belongs to the uniqueness regime iff $r_{\mathbf{1} + \mathbf{c} h}(x_{\lambda_{\Delta},\mathbf{1} + \mathbf{c} h}) \leq \lambda^{-1}_{\Delta}$. \label{dynamocorb}
\end{enumerate}
\end{corollary}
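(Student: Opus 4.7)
The plan is to assemble Corollary \ref{dynamocor} as a direct consequence of Lemma \ref{hdynamo} and Observation \ref{dynamicobs2}, with only a small amount of continuity bookkeeping to patch together the various thresholds. Throughout I will write $\mb\theta_h = \mathbf{1} + \mathbf{c}h$ to lighten notation.

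First I would fix $U = 2\lambda_{\Delta}$ and invoke Lemma \ref{hdynamo} to produce a threshold $\delta_{\mathbf{c},U}$ on which $g_{\mb\theta_h}$ and $p_{\mb\theta_h}$ are jointly real analytic in $(h,x)$, the map $p_{\mb\theta_h}$ is strictly increasing in $x$ (so that the inverse $p^{\leftarrow}_{\mb\theta_h}$, and hence $q_{\lambda_{\Delta},\mb\theta_h}$, are well defined and real analytic), and both $\partial_x q_{\lambda_{\Delta},\mb\theta_h}(x)$ and $S[q_{\lambda_{\Delta},\mb\theta_h}](x)$ are strictly negative on $[0,\delta_{\mathbf{c},U}]\times[0,2\lambda_{\Delta}]$. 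Next, by Observation \ref{limlogcon} I can further shrink the threshold to a positive $\delta_{\mathbf{c}}$ so that $\mb\theta_h$ is log-convex for all $h\in[0,\delta_{\mathbf{c}}]$.

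For part (\ref{dynamocora}), strict monotonicity of $q_{\lambda_{\Delta},\mb\theta_h}(x)-x$ on $[0,2\lambda_{\Delta}]$ is immediate from $\partial_x q_{\lambda_{\Delta},\mb\theta_h}(x)-1<-1$. To locate a (unique) zero in this interval, I would use the explicit hardcore baseline at $h=0$: since $f_{\mathbf{1}}\equiv 1$ and $g_{\mathbf{1}}(x)=(1+x)^{-(\Delta-1)}$, one has $p_{\mathbf{1}}(x)=x$ and therefore $q_{\lambda_{\Delta},\mathbf{1}}(x)=\lambda_{\Delta}(1+x)^{-(\Delta-1)}$, which is strictly positive at $x=0$ and strictly less than $2\lambda_{\Delta}$ at $x=2\lambda_{\Delta}$. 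Joint continuity of $q_{\lambda_{\Delta},\mb\theta_h}(x)$ in $(h,x)$ on the compact rectangle $[0,\delta_{\mathbf{c}}]\times\{0,2\lambda_{\Delta}\}$ preserves these strict inequalities after possibly shrinking $\delta_{\mathbf{c}}$, and the intermediate value theorem produces the unique zero $x_{\lambda_{\Delta},\mb\theta_h}$.

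For part (\ref{dynamocorb}), I would verify the five hypotheses of Theorem \ref{dynamical1} for $F=q_{\lambda_{\Delta},\mb\theta_h}$ on $I_h=[0,\lambda_{\Delta}(\theta_{\Delta}/\theta_{\Delta-1})^{\Delta-1}\theta_0^{-1}]$ and then invoke Observation \ref{dynamicobs2}. For $h=0$ the interval is exactly $[0,\lambda_{\Delta}]$, so by continuity $I_h\subset[0,2\lambda_{\Delta}]$ after a further shrinkage of $\delta_{\mathbf{c}}$. Observation \ref{rewritesoe} gives $F:I_h\to I_h$; Lemma \ref{hdynamo}(\ref{hdynamo2}) gives the $C^3$ regularity, the strictly decreasing property, and the strictly negative Schwarzian throughout the larger rectangle (and hence on $I_h$); and part (\ref{dynamocora}) together with Observation \ref{rewritesoe} identifies the unique fixed point in $I_h$ as $x_{\lambda_{\Delta},\mb\theta_h}$. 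Observation \ref{dynamicobs2} then yields uniqueness of the infinite-volume Gibbs measure iff $r_{\mb\theta_h}(x_{\lambda_{\Delta},\mb\theta_h})\le\lambda_{\Delta}^{-1}$, which is the desired criterion.

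No single step is difficult once Lemma \ref{hdynamo} is in hand; the only thing to be careful about is that the final $\delta_{\mathbf{c}}$ is the minimum of several thresholds (real analyticity and sign of derivatives on $[0,2\lambda_{\Delta}]$, log-convexity of $\mb\theta_h$, the continuity arguments pinning down the zero in $[0,2\lambda_{\Delta}]$, and containment $I_h\subset[0,2\lambda_{\Delta}]$). The mild subtlety worth stating explicitly is that the interval $[0,2\lambda_{\Delta}]$ used in part (\ref{dynamocora}) is chosen generously enough to contain $I_h$ for all admissible $h$, so that the uniqueness of the zero on $[0,2\lambda_{\Delta}]$ immediately implies uniqueness on $I_h$ and legitimizes the identification of $x_{\lambda_{\Delta},\mb\theta_h}$ with the fixed point $x^*$ in the statement of Observation \ref{dynamicobs2}.
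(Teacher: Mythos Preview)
Your proposal is correct and follows exactly the approach the paper takes: the paper simply asserts that Corollary~\ref{dynamocor} follows immediately from combining Observation~\ref{dynamicobs2} with Lemma~\ref{hdynamo}, and you have carefully spelled out the continuity bookkeeping (shrinking $\delta_{\mathbf{c}}$ for log-convexity, for the endpoint signs of $q_{\lambda_{\Delta},\mb\theta_h}(x)-x$, and for the containment $I_h\subset[0,2\lambda_{\Delta}]$) that the paper leaves implicit. Your explicit verification that the unique zero on $[0,2\lambda_{\Delta}]$ coincides with the fixed point in $I_h$ required by Observation~\ref{dynamicobs2} is exactly the right detail to record.
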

With Corollary\ \ref{dynamocor} in hand, we now complete the proof of Theorem\ \ref{whenunique}.  For $l \in \lbrace 0,1 \rbrace$ and $\mathbf{c} = (c_0,\ldots,c_{\Delta})$, 
let 
$$f_{l,\mb\theta}(x) \stackrel{\Delta}{=} \sum_{i=0}^{\Delta - 1} \theta_{i + l} {\Delta  - 1 \choose i} x^i\ \ \ ,\ \ \  
z_{l,\mathbf{c}} \stackrel{\Delta}{=} \sum_{i=0}^{\Delta-1}  {\Delta - 1 \choose i } x_{\lambda_{\Delta},\mathbf{1}}^i c_{i + l}\ \ \ ,\ \ \ 
w_{l,\mathbf{c}} \stackrel{\Delta}{=} \sum_{i=0}^{\Delta-1} {\Delta - 1 \choose i } i x_{\lambda_{\Delta},\mathbf{1}}^{i-1} c_{i + l},$$
and
$$x_{\mathbf{c}} \stackrel{\Delta}{=} 
\frac{1}{2} \frac{(\Delta-2)^{\Delta-2}}{(\Delta-1)^{\Delta-1}} \big( (\Delta - 1) z_{1,\mathbf{c}} - \Delta z_{0,\mathbf{c}} \big).$$
Also, let $o(h)$ denote the family of functions $F(h)$ such that $\lim_{h \downarrow 0} h^{-1} F(h) = 0$.  With a slight abuse of notation, we will also let $o(h)$ refer to any particular function belonging to this family.  Finally, in simplifying certain expressions, we will use the following identities, which follow from a straightforward calculation (the details of which we omit).
\begin{lemma}\label{goodxis}
$$x_{\lambda_{\Delta},\mathbf{1}} = (\Delta - 2)^{-1}\ \ \ ,\ \ \ 
\sum_{i=0}^{\Delta} \Lambda_{\Delta,i} = (\frac{\Delta-1}{\Delta-2})^{\Delta}\ \ \ ,\ \ \ 
\sum_{i=0}^{\Delta} i \Lambda_{\Delta,i} = \Delta \frac{(\Delta-1)^{\Delta - 1}}{(\Delta-2)^{\Delta}}$$
$$\sum_{i=0}^{\Delta} i^2 \Lambda_{\Delta,i} = 2 \Delta \frac{(\Delta-1)^{\Delta-1}}{(\Delta-2)^{\Delta}}\ \ \ ,\ \ \ 
\sum_{i=0}^{\Delta} \pi_i = - (\frac{\Delta-1}{\Delta-2})^{\Delta - 1}\ \ \ ,\ \ \ 
\sum_{i=0}^{\Delta} i \pi_i =  \Delta (\frac{\Delta-1}{\Delta-2})^{\Delta - 1}.
$$
\end{lemma}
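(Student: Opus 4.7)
The plan is purely computational: I would specialize the earlier definitions of $f_{\mb\theta}, g_{\mb\theta}, p_{\mb\theta}, q_{\lambda,\mb\theta}$ to $\mb\theta=\mathbf{1}$ and reduce the six identities to binomial-theorem evaluations of $(1+x)^{\Delta}$ and its first few derivatives at $x=(\Delta-2)^{-1}$.

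First I would establish $x_{\lambda_\Delta,\mathbf{1}}=(\Delta-2)^{-1}$. For $\mb\theta=\mathbf{1}$ one has $f_{\mathbf{1}}\equiv 1$, so $p_{\mathbf{1}}(x)=x$ is its own inverse on $\reals^+$, and $g_{\mathbf{1}}(x)=(1+x)^{-(\Delta-1)}$ by the binomial theorem. Consequently $q_{\lambda,\mathbf{1}}(x)=\lambda(1+x)^{-(\Delta-1)}$, and the fixed-point equation $q_{\lambda_\Delta,\mathbf{1}}(x)=x$ rearranges to $x(1+x)^{\Delta-1}=\lambda_\Delta=(\Delta-1)^{\Delta-1}(\Delta-2)^{-\Delta}$. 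Substituting the candidate $x=(\Delta-2)^{-1}$ yields $(\Delta-2)^{-1}\bigl(\tfrac{\Delta-1}{\Delta-2}\bigr)^{\Delta-1}=\lambda_\Delta$, and Observation \ref{rewritesoe} guarantees that the fixed point is unique on $\reals^+$, identifying this candidate as $x_{\lambda_\Delta,\mathbf{1}}$.

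Next I would evaluate the three moments of $\Lambda_{\Delta,i}=\binom{\Delta}{i}(\Delta-2)^{-i}$ by differentiating $(1+x)^\Delta=\sum_i\binom{\Delta}{i}x^i$ suitably and specializing to $x=(\Delta-2)^{-1}$, at which point $1+x=(\Delta-1)/(\Delta-2)$. The zeroth moment is immediate. The first uses $\sum_i i\binom{\Delta}{i}x^i=\Delta x(1+x)^{\Delta-1}$. For the second, writing $i^2=i(i-1)+i$ and combining with $\sum_i i(i-1)\binom{\Delta}{i}x^i=\Delta(\Delta-1)x^2(1+x)^{\Delta-2}$ produces a sum of two terms that each collapse to $\Delta(\Delta-1)^{\Delta-1}(\Delta-2)^{-\Delta}$ at the chosen $x$, which accounts for the factor of $2$ in the claimed closed form.

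Finally I would substitute $\pi_j=\Lambda_{\Delta,j}\bigl((\Delta-2)+(6-5\Delta)j+2(\Delta-1)j^2\bigr)$ and expand the two $\pi$-sums by linearity, reusing the three $\Lambda$-moments (and, for $\sum_i i\pi_i$, one additional moment $\sum_i i^3\Lambda_{\Delta,i}$, which I would compute via $i^3=i(i-1)(i-2)+3i(i-1)+i$ together with $\sum_i i(i-1)(i-2)\binom{\Delta}{i}x^i=\Delta(\Delta-1)(\Delta-2)x^3(1+x)^{\Delta-3}$). In each case, after factoring out the common prefactor $(\Delta-1)^{\Delta-1}(\Delta-2)^{-\Delta}$ (respectively $\Delta(\Delta-1)^{\Delta-1}(\Delta-2)^{-\Delta}$), the remaining polynomial in $\Delta$ should collapse to $-(\Delta-2)$ (respectively $(\Delta-2)$) once the $\Delta^2$ and $\Delta$ terms cancel; this recovers exactly $-(\tfrac{\Delta-1}{\Delta-2})^{\Delta-1}$ and $\Delta(\tfrac{\Delta-1}{\Delta-2})^{\Delta-1}$. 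No step is conceptually difficult, and the only obstacle is arithmetic bookkeeping, namely verifying that the polynomial cancellations land precisely on the advertised constants rather than on something off by a $\pm 1$ in the exponent or prefactor.
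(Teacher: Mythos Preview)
Your proposal is correct and is precisely the kind of straightforward binomial-theorem computation the paper has in mind; the paper itself omits all details, simply stating that the identities ``follow from a straightforward calculation (the details of which we omit).'' Your outline fills in exactly those omitted details, and the arithmetic checks out (in particular, your use of $i^3=i(i-1)(i-2)+3i(i-1)+i$ to obtain the third $\Lambda$-moment and the subsequent cancellation down to $(\Delta-2)B$ for $\sum_i i\pi_i$ are both correct).
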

\begin{proof}[Proof of Theorem\ \ref{whenunique}]
We proceed by analyzing $r_{\mathbf{1} + \mathbf{c} h}(x_{\lambda_{\Delta},\mathbf{1} + \mathbf{c} h}) - \lambda^{-1}_{\Delta}$ as $h \downarrow 0$, and begin by proving that
\begin{equation}\label{whatisx}
\lim_{h \downarrow 0} (x_{\lambda_{\Delta},\mathbf{1} + \mathbf{c} h} - x_{\lambda_{\Delta},\mathbf{1}}) h^{-1} = x_{\mathbf{c}}.
\end{equation}
Note that for any fixed $\alpha \in \reals$ and $l \in \lbrace 0,1 \rbrace$,
\begin{eqnarray} 
f_{l,\mathbf{1} + \mathbf{c} h}(x_{\lambda_{\Delta},\mathbf{1}} + \alpha h) &=& \sum_{i=0}^{\Delta-1}  {\Delta - 1 \choose i } (1 + c_{i + l} h) \sum_{j=0}^i {i \choose j} x_{\lambda_{\Delta},\mathbf{1}}^{i-j} (\alpha h)^j\nonumber
\\&=& \sum_{i=0}^{\Delta-1}  {\Delta - 1 \choose i } x_{\lambda_{\Delta},\mathbf{1}}^i (1 + c_{i + l} h)( 1 + i x_{\lambda_{\Delta},\mathbf{1}}^{- 1} \alpha h) + o(h)\nonumber
\\&=& (1 + x_{\lambda_{\Delta},\mathbf{1}})^{\Delta - 1} + \big( (\Delta - 1)(1 + x_{\lambda_{\Delta},\mathbf{1}})^{\Delta - 2} \alpha + z_{l,\mathbf{c}} \big) h + o(h).\label{flhexp}
\end{eqnarray}
We conclude that
\begin{equation}
g_{\mathbf{1} + \mathbf{c} h}(x_{\lambda_{\Delta},\mathbf{1}} + \alpha h)
=
(1 + x_{\lambda_{\Delta},\mathbf{1}})^{-(\Delta - 1)} 
- 
(1 + x_{\lambda_{\Delta},\mathbf{1}})^{-2 (\Delta - 1)}
\big( (\Delta - 1)(1 + x_{\lambda_{\Delta},\mathbf{1}})^{\Delta - 2} \alpha + z_{0,\mathbf{c}} \big) h + o(h),\label{gexp}
\end{equation}
and
\begin{equation}
f_{\mathbf{1} + \mathbf{c} h}(x_{\lambda_{\Delta},\mathbf{1}} + \alpha h) 
=
1 + (1 + x_{\lambda_{\Delta},\mathbf{1}})^{-(\Delta - 1)}(z_{1,\mathbf{c}} - z_{0,\mathbf{c}}) h + o(h).\label{fkhexp}
\end{equation}
It follows from (\ref{gexp}), (\ref{fkhexp}), and a straightforward calculation (the details of which we omit) that for $\alpha \in \reals$,
$$
(x_{\lambda_{\Delta},\mathbf{1}} + \alpha h) - \lambda_{\Delta} f^{\Delta - 1}_{\mathbf{1} + \mathbf{c} h}(x_{\lambda_{\Delta},\mathbf{1}} + \alpha h) g_{\mathbf{1} + \mathbf{c} h}(x_{\lambda_{\Delta},\mathbf{1}} + \alpha h)
=
2 (\alpha -  x_{\mathbf{c}}) h + o(h).$$
Combining with Corollary\ \ref{dynamocor}.(\ref{dynamocora}), and the fact that $x_{\lambda_{\Delta},\mathbf{1}} < \lambda_{\Delta}$, completes the proof.
\\\indent Next, we use (\ref{whatisx}) to prove that
\begin{equation}\label{cleanitup1}
\partial_x p_{\mathbf{1} + \mathbf{c} h}(x_{\lambda_{\Delta},\mathbf{1} + \mathbf{c} h}) + \lambda_{\Delta} \partial_x g_{\mathbf{1} + \mathbf{c} h}(x_{\lambda_{\Delta},\mathbf{1} + \mathbf{c} h})
=
- \frac{1}{2}  (\frac{\Delta-2}{\Delta-1})^{\Delta} \mb{\pi} \cdot \mathbf{c} h + o(h).
\end{equation}
Indeed, it follows from (\ref{whatisx}) that
\begin{eqnarray} 
\partial_x f_{l,\mathbf{1} + \mathbf{c} h}(x_{\lambda_{\Delta},\mathbf{1} + \mathbf{c} h}) &=& 
\sum_{i=1}^{\Delta-1}  {\Delta - 1 \choose i } i (1 + c_{i + l} h) \sum_{j=0}^{i-1} {i - 1 \choose j} x_{\lambda_{\Delta},\mathbf{1}}^{i-1-j} (x_{\mathbf{c}} h)^j + o(h) \nonumber
\\&=& 
\sum_{i=1}^{\Delta-1}  {\Delta - 1 \choose i } i x_{\lambda_{\Delta},\mathbf{1}}^i (1 + c_{i + l} h) \big(x^{-1}_{\lambda_{\Delta},\mathbf{1}} + (i-1) x^{-2}_{\lambda_{\Delta},\mathbf{1}} x_{\mathbf{c}} h
\big) + o(h),\nonumber
\end{eqnarray}
which itself equals
\begin{equation}\label{dflhexp}
(\Delta - 1) (1 + x_{\lambda_{\Delta},\mathbf{1}})^{\Delta - 2} + \bigg( x_{\mathbf{c}} (\Delta - 1) (\Delta - 2) (1 + x_{\lambda_{\Delta},\mathbf{1}})^{\Delta - 3} + w_{l,\mathbf{c}} \bigg)h + o(h). 
\end{equation}
It follows from (\ref{whatisx}) - (\ref{dflhexp}), and a straightforward calculation (the details of which we omit), that 
$$\partial_x g_{\mathbf{1} + \mathbf{c} h}(x_{\lambda_{\Delta},\mathbf{1} + \mathbf{c} h}) 
= - g^{2}_{\mathbf{1} + \mathbf{c} h}(x_{\lambda_{\Delta},\mathbf{1} + \mathbf{c} h}) \partial_x f_{0,\mathbf{1} + \mathbf{c} h}(x_{\lambda_{\Delta},\mathbf{1} + \mathbf{c} h}),$$ which itself equals 
\begin{equation}\label{dghexp}
 - (\Delta - 1) (1 + x_{\lambda_{\Delta},\mathbf{1}})^{-\Delta} + (1 + x_{\lambda_{\Delta},\mathbf{1}})^{-(2 \Delta - 1)} 
\bigg( - (1 + x_{\lambda_{\Delta},\mathbf{1}}) w_{0,\mathbf{c}}
+ \Delta (\Delta - 1) (1 + x_{\lambda_{\Delta},\mathbf{1}})^{\Delta - 2} x_{\mathbf{c}}
+ 2 (\Delta - 1) z_{0,\mathbf{c}} \bigg)h + o(h);
\end{equation}
$\partial_x f_{\mathbf{1} + \mathbf{c} h}(x_{\lambda_{\Delta},\mathbf{1} + \mathbf{c} h})$ equals 
$$g^{2}_{\mathbf{1} + \mathbf{c} h}(x_{\lambda_{\Delta},\mathbf{1} + \mathbf{c} h}) \bigg(
f_{0,\mathbf{1} + \mathbf{c} h}(x_{\lambda_{\Delta},\mathbf{1} + \mathbf{c} h}) \partial_x f_{1,\mathbf{1} + \mathbf{c} h}(x_{\lambda_{\Delta},\mathbf{1} + \mathbf{c} h})
- f_{1,\mathbf{1} + \mathbf{c} h}(x_{\lambda_{\Delta},\mathbf{1} + \mathbf{c} h}) \partial_x f_{0,\mathbf{1} + \mathbf{c} h}(x_{\lambda_{\Delta},\mathbf{1} + \mathbf{c} h})\bigg),$$ which itself equals
$$
(1 + x_{\lambda_{\Delta},\mathbf{1}})^{-\Delta}\bigg( (1 + x_{\lambda_{\Delta},\mathbf{1}})(w_{1,\mathbf{c}} - w_{0,\mathbf{c}}) + (\Delta - 1)
(z_{0,\mathbf{c}} - z_{1,\mathbf{c}}) \bigg) h + o(h);
$$
and
$$\partial_x p_{\mathbf{1} + \mathbf{c} h}(x_{\lambda_{\Delta},\mathbf{1} + \mathbf{c} h})
= f^{-(\Delta - 1)}_{\mathbf{1} + \mathbf{c} h}(x_{\lambda_{\Delta},\mathbf{1} + \mathbf{c} h})  
- (\Delta - 1) x_{\lambda_{\Delta},\mathbf{1} + \mathbf{c} h}  f^{-\Delta}_{\mathbf{1} + \mathbf{c} h}(x_{\lambda_{\Delta},\mathbf{1} + \mathbf{c} h}) \partial_x f_{\mathbf{1} + \mathbf{c} h}(x_{\lambda_{\Delta},\mathbf{1} + \mathbf{c} h}),$$ which itself equals
\begin{equation}\label{dphexp}
1 - (\Delta-1) x_{\lambda_{\Delta},\mathbf{1}} (1 + x_{\lambda_{\Delta},\mathbf{1}})^{-(\Delta-1)} (w_{1,\mathbf{c}}-w_{0,\mathbf{c}}) h + o(h). 
\end{equation}
Combining (\ref{dghexp}) - (\ref{dphexp}) with Lemma\ \ref{goodxis} and simplifying, we conclude that the left-hand side of 
(\ref{cleanitup1}) equals 
\begin{equation}\label{finaltopi}
\frac{(\Delta-2)^{\Delta-2}}{2(\Delta-1)^{\Delta}}
\bigg(
-(\Delta-2)^3 z_{0,\mathbf{c}} + \Delta(\Delta-1)(\Delta-2) z_{1,\mathbf{c}}
+ 2 (\Delta - 1)(\Delta - 2) w_{0,\mathbf{c}} - 2 (\Delta-1)^2 w_{1,\mathbf{c}} \bigg) h + o(h).
\end{equation}
It follows from the definition of $\mb{\pi}$ and a further straightforward algebraic manipulation that (\ref{finaltopi}) equals 
$- \frac{1}{2}  (\frac{\Delta-2}{\Delta-1})^{\Delta} \mb{\pi} \cdot \mathbf{c} h + o(h)$, completing the desired demonstration.
\\\indent Combining (\ref{whatisx}), (\ref{cleanitup1}), and Corollary\ \ref{dynamocor} with the fact that 
$r_{\mathbf{1} + \mathbf{c} h}(x_{\lambda_{\Delta},\mathbf{1} + \mathbf{c} h}) \leq \lambda^{-1}_{\Delta}$
iff the l.h.s. of (\ref{cleanitup1}) is non-negative completes the proof.
\end{proof}
\section{Conclusion}\label{Concsec}
In this paper, we investigated second-order M.r.f. for independent sets on the infinite Cayley tree, a generalization of the hardcore model which arises in statistical physics, combinatorial optimization, and operations research, with an eye towards understanding which distributions can be attained for the number of included neighbors of an excluded node, while staying in the uniqueness regime.  We proved that the associated Gibbsian specification satisfies the FKG Inequality whenever the local potentials defining the Hamiltonian satisfy a certain log-convexity condition, which leads to so-called reverse ultra log-concave distributions for the number of included neighbors of an excluded node.  Under this condition, we gave necessary and sufficient conditions for the existence of a unique infinite-volume Gibbs measure in terms of an explicit system of equations, proved the existence of a phase transition, and gave explicit lower and upper bounds on the associated critical activity, which we proved to exhibit a certain robustness.  For potentials which are small perturbations of those coinciding to the hardcore model at its critical activity, we performed a perturbative analysis of the system of equations arising from our necessary and sufficient conditions for uniqueness, allowing us to explicitly characterize whether the resulting specification has a unique infinite-volume Gibbs measure in terms of whether these perturbations satisfy an explicit linear inequality.  Our analysis revealed an interesting non-monotonicity with regards to biasing towards excluded nodes with no included neighbors, which we used (in conjunction with our lower and upper bounds) to compare the uniqueness regime for our model to a related model in which the associated potentials have a simple factorized form.
\\\indent This work leaves many interesting directions for future research.  The full power of higher-order M.r.f. for sampling from independent sets in sparse graphs, and the associated  uniqueness regime, remains poorly understood.  Several questions build immediately on the models considered in this paper, such as developing a deeper understanding of the uniqueness regime for second-order M.r.f. with log-convex potentials, and more generally higher-order M.r.f. which also satisfy the FKG Inequality.  Analyzing settings in which the FKG Inequality no longer holds (at least for the partial order considered in this paper), e.g. second-order M.r.f. with log-concave potentials (which includes the restriction to maximal independent sets, and for which $f_{\mb\theta}$ is monotone decreasing instead of increasing), remains an open challenge.  It is also an open question to understand which sets of occupancy probabilities can be acheived by higher-order M.r.f. (in the uniqueness regime).  Can one use higher-order M.r.f. (in the uniqueness regime) to sample from denser independent sets than can be attained using the hardcore model at the critical activity?   It would also be interesting to study higher-order M.r.f. for related combinatorial problems, e.g. graph coloring, as well as for more general sparse graphs.  Indeed, although our results can be easily extended to the setting of regular graphs of large girth using standard techniques, proving results for general bounded-degree graphs (as was done by Weitz for the hardcore model in \cite{W.06}) seems to require fundamentally new ideas.  Similarly, the algorithmic implications of phase transitions for higher-order M.r.f. also remain open questions.  In particular, one would expect a ``complexity transition" at the uniqueness threshold with respect to approximately computing the relevant partition functions, as has been recently established for first-order M.r.f. (cf. \cite{SS.12}).  
\\\indent Finally, it is open to investigate the connection between higher-order M.r.f. and research on bernoulli shifts, i.i.d. factors of graphs, and local algorithms (cf. \cite{van1999existence,backhausz2014ramanujan}), which have played a prominent role recently in developing algorithms for finding dense independent sets in sparse graphs (cf. \cite{gamarnik2014limits,csoka2014invariant}).  For example, it has been proven that under certain additional technical assumptions, certain M.r.f. can (not) be well-approximated (in an appropriate sense) by i.i.d. factors of graphs (cf. \cite{van1999existence,backhausz2014large}).  However, a complete understanding of this and related questions seems beyond the reach of current techniques.  The converse, i.e. questions regarding whether an i.i.d. factor of graphs can be well-approximated by a finite-order M.r.f. in the uniqueness regime, seem to have received less attention in the literature, beyond the special case in which the underlying graph is a line and the M.r.f. reduces to a Markov chain (cf. \cite{ornstein1973application,rudolph1977limits}).  Such a connection could open the door to, e.g., searching the space of M.r.f. (which are easily parametrized on sparse graphs) to find specifications which sample from dense independent sets while remaining in the uniqueness regime.  We conclude with the following related question.
\begin{question}
To what extent are higher-order M.r.f. in the uniqueness regime capable of (approximately) encoding those distributions on independent sets which exhibit long-range independence, such as i.i.d. factors of graphs?
\end{question}
\section*{Acknowledgements} The author would like to thank Rob van den Berg, David Gamarnik, Kavita Ramanan, Justin Salez, and Prasad Tetali for several stimulating discussions.  The author especially thanks Rob van den Berg for a very helpful conversation regarding the applicability of the FKG Theorem.
\bibliographystyle{amsplain}
\bibliography{Gibbs_bib_5_20_2015}
\end{document}